\newtheorem{theo}{Theorem}[section]
\newtheorem{lemma}[theo]{Lemma}
\newtheorem{propo}[theo]{Proposition}
\newtheorem{defi}[theo]{Definition}
\newtheorem{coro}[theo]{Corollary}
\newtheorem{rem}[theo]{Remark}
\newtheorem{con}[theo]{Construction}
\newtheorem{nota}[theo]{Notation}
\newtheorem{exam}[theo]{Example}
\newtheorem{exams}[theo]{Examples}
\newcommand\Ind{\operatorname{Ind}}
\newcommand\Ort{\operatorname{Ort}}
\newcommand\fd{\operatorname{fd}}
\newcommand\Mod{\operatorname{Mod}}
\newcommand\Gen{\operatorname{Gen}}
\newcommand\Alg{\operatorname{Alg}}
\newcommand\StrongMono{\operatorname{StrongMono}}
\newcommand\Iso{\operatorname{Iso}}
\newcommand\StrongEpi{\operatorname{StrongEpi}}
\newcommand\Epi{\operatorname{Epi}}
\newcommand\op{\operatorname{op}}
\newcommand\id{\operatorname{id}}
\newcommand\eps{\operatorname{\varepsilon}}
\newcommand\Set{\operatorname{\bf Set}}
\newcommand\Ban{\operatorname{\bf Ban}}
\newcommand\CAlg{\operatorname{\bf CAlg}}
\newcommand\CCAlg{\operatorname{\bf CCAlg}}
\newcommand\LG{\operatorname{\bf LG}}
\newcommand\N{\operatorname{\bf N}}
\newcommand\CMet{\operatorname{\bf CMet}}
\newcommand\Met{\operatorname{\bf Met}}
\newcommand\CCMet{\operatorname{\bf CCMet}}
\newcommand\pa{\operatorname{\parallel}}
\newcommand\Mono{\operatorname{Mono}}
\newcommand\colim{\operatorname{colim}}
\newcommand\ca{\mathcal {A}}
\newcommand\cb{\mathcal {B}}
\newcommand\cd{\mathcal {D}}
\newcommand\cg{\mathcal {G}}
\newcommand\ce{\mathcal {E}}
\newcommand\ck{\mathcal {K}}
\newcommand\cl{\mathcal {L}}
\newcommand\cm{\mathcal {M}}
\newcommand\cp{\mathcal {P}}
\newcommand\cv{\mathcal {V}}
 \newbox\noforkbox \newdimen\forklinewidth
\noforkbox\hbox{\lower 2pt\box1\lower
2pt\box0\relax}
\date{February 27, 2022}
\begin{document}
\title[Enriched locally generated categories]
{Enriched locally generated categories}
\author[I. Di Liberti and J. Rosick\'{y}]
{I. Di Liberti and J. Rosick\'{y}}
\thanks{The first author was supported by the Grant Agency of the Czech Republic project EXPRO 20-31529X and RVO: 67985840. The second author was supported by the Grant Agency of the Czech Republic under the grant 19-00902S} 
\address{
\newline I. Di Liberti\newline
Institute of Mathematics\newline
Czech Academy of Sciences\newline
\v{Z}itn\'{a} 25, Prague, Czech Republic\newline
diliberti.math@gmail.com\newline
\newline J. Rosick\'{y}\newline
Department of Mathematics and Statistics\newline
Masaryk University, Faculty of Sciences\newline
Kotl\'{a}\v{r}sk\'{a} 2, 611 37 Brno, Czech Republic\newline
rosicky@math.muni.cz
}
 
\begin{abstract}
We introduce the notion of $\cm$-locally generated category for a factorization system $(\ce,\cm)$ and study its properties. We offer a Gabriel-Ulmer duality for these categories, introducing the notion of nest. We develop this theory also from an enriched point of view. We apply this technology to Banach spaces showing that it is equivalent to the category of models of the nest of finite-dimensional Banach spaces.
\end{abstract} 
%\keywords{}
%\subjclass{}

\maketitle
\tableofcontents
\section{Introduction}
Locally presentable categories were introduced by Gabriel and Ulmer \cite{GU} in 1971 and, since then, their importance was steadily growing. Today, they form an established framework to do category theory in the daily practice of the working mathematician. The reason of their success is merely evidence-based. On the one hand, they are technically handy, allowing transfinite constructions (e.g. the small object argument) and thus offering pleasing versions of relevant tools (e.g. the adjoint functor theorem). On the other hand, a vast majority of relevant categories happen to be locally presentable (with some very disappointing exceptions, like the category of topological spaces). 

Gabriel and Ulmer \cite{GU} also introduced locally generated categories. While locally presentable categories are based on the classical algebraic concept of a (finitely) presentable object, locally generated categories are based on (finitely) generated objects. This makes them easier to handle then locally presentable ones. Although the both classes of categories coincide, a category can be locally finitely generated without being locally finitely presentable. Despite this, locally generated categories were somewhat neglected. They were mentioned in \cite{AR} and, later, generalized in \cite{AR1}. In a nutshell, a $\cm$-locally generated category is a cocomplete category $\ck$, equipped with a factorization system $(\ce, \cm)$, that is generated by a set of $\cm$-generated objects. In \cite{GU} they were only dealing with (Strong Epi, Mono), in \cite{AR1}, monomorphisms were replaced by any right part of a proper factorization system $(\ce,\cm)$. 

We slightly generalize \cite{AR1} by taking any factorization system $(\ce,\cm)$. A locally $\lambda$-presentable category is then a $\ck^\to$-locally $\lambda$-generated category. More importantly, we extend the (nearly forgotten) Gabriel-Ulmer duality for locally generated categories to our setting. While the famous Gabriel-Ulmer duality for locally finitely presentable categories is based on the fact that they are sketched by a finite limit sketch, their duality for locally finitely generated categories uses finite limits and multiple pullbacks of monomorphisms. This explains why locally finitely generated categories do not need to be locally finitely presentable. In our situation, we use multiple pullbacks of morphisms from $\cm$. 

Our main goal is to apply these ideas to Banach spaces. For this, we have to make our theory enriched. While enriched locally presentable categories were treated by Kelly \cite{K1}, enriched locally generated categories have been never considered. Our motivation was \cite{AR2} where finite-dimensional Banach spaces, which are not finitely presentable, were shown to be finitely generated w.r.t. isometries when Banach spaces are enriched over $\CMet$, the category of complete metric spaces. We provide an appropriate framework for the observation \cite{AR2} 7.8 that Banach spaces are locally finitely generated w.r.t. isometries. The relevant factorization system is (dense maps, isometries). Our main accomplishment is that Banach spaces can be sketched using finite-dimensional Banach spaces equipped with finite weighted limits and multiple pullbacks of isometries. This has turned out to be more delicate than expected, because $\CMet$ is a quite ill-behaved enrichment base.

In Section 2 we introduce the notion of a locally generated category, providing examples and properties. For this, we need the concept of a $\lambda$-convenient factorization system $(\ce,\cm)$, which seems to be of its own interest. We discuss the main properties of $\lambda$-generated objects w.r.t. $\cm$. We provide a recognition theorem (\ref{char1}) for locally $\lambda$-generated categories which meets the spirit of the original definition of Gabriel and Ulmer. In Section 3 we introduce the notion of a $\lambda$-nest, a sketch-like gadget that we use to recast a suitable version of Gabriel-Ulmer duality for $\cm$-locally generated categories. We introduce the notion of a model of a nest, and show that the category of models of a nest is a $\cm$-locally generated category. In Sections 4 and 5 we present an enriched version of Sections 2 and 3, when the enrichment base $\cv$ is locally $\lambda$-presentable as a closed category.   Section 6 discusses the case of Banach spaces in detail, focusing on their axiomatizability via the $(\aleph_0, \CMet)$-nest $\Ban_{\text{fd}}$.

\section{Locally generated categories}
\textit{Locally $\lambda$-generated categories} were introduced by Gabriel and Ulmer \cite{GU} as cocomplete categories $\ck$ having a strong generator consisting of $\lambda$-generated objects such that every $\lambda$-generated object has only a set 
of strong quotients. Here, an object $A$ is \textit{$\lambda$-generated} if its hom-functor $\ck(A,-):\ck\to\Set$ preserves $\lambda$-directed colimits of monomorphisms.
A category is \textit{locally generated} if it is locally $\lambda$-generated
for some regular cardinal $\lambda$. \cite{GU} showed that locally generated categories coincide with locally presentable ones.
Every locally $\lambda$-presentable category is locally $\lambda$-generated but a locally $\lambda$-generated category does
not need to be locally $\lambda$-presentable. Thus the passage to locally generated categories can lower the defining cardinal
$\lambda$.

The definition of a locally $\lambda$-generated category was reformulated in \cite{AR}: a cocomplete category $\ck$ is locally
$\lambda$-generated if it has a set $\ca$ of $\lambda$-generated objects such that every object is a $\lambda$-directed colimit
of its subobjects from $\ca$. The fact that these two definitions are equivalent follows from \cite{AR} 1.70 and \cite{GU} 9.2.

A further step was made in \cite{AR1} where a cocomplete category $\ck$ with a proper factorization system $(\ce,\cm)$ was
called \textit{$\cm$-locally $\lambda$-generated} if it has a set $\ca$ of $\lambda$-generated objects w.r.t. $\cm$ such that every object is a $\lambda$-directed colimit of $\ca$-objects and $\cm$-morphisms. Here, an object $A$ is 
\textit{$\lambda$-generated w.r.t. $\cm$} if its hom-functor $\ck(A,-):\ck\to\Set$ preserves $\lambda$-directed colimits of 
$\cm$-morphisms. Again, \cite{AR1} proves that a category is $\cm$-locally generated for some proper
factorization system $(\ce,\cm)$ iff it is locally presentable. Moreover, locally $\lambda$-generated categories are
$\cm$-locally $\lambda$-generated ones for the factorization system (strong epimorphisms, monomorphisms). We extend this
definition a little bit.

\begin{defi}
{
\em 
Let $\ck$ be a category with a factorization system $(\ce,\cm)$ and $\lambda$ a regular cardinal. We say that an object $A$ is \textit{$\lambda$-generated w.r.t. $\cm$} if its hom-functor $\ck(A,-):\ck\to\Set$ preserves $\lambda$-directed colimits of $\cm$-morphisms.
}
\end{defi}

\begin{exams}
{
\em
(1) For the factorization system $(\Iso,\ck^\to)$, an object is $\lambda$-generated w.r.t. $\ck^\to$ iff it is $\lambda$-presentable.

(2) For the factorization system $(\ck^\to,\Iso)$, every object is $\lambda$-generated w.r.t. $\Iso$.  

(3) For the factorization system $(\StrongEpi,\Mono)$, an object is $\lambda$-generated w.r.t. $\Mono$ iff it is $\lambda$-generated.
}
\end{exams}

\begin{defi}\label{conv}
{
\em
Let $\lambda$ be a regular cardinal. A factorization system $(\ce,\cm)$ in a category $\ck$ will be called \textit{$\lambda$-convenient} if 
\begin{enumerate}
\item $\ck$ is $\ce$-cowellpowered, i.e., if every object of $\ck$ has only a set of $\ce$-quotients, and
\item $\cm$ is closed under $\lambda$-directed colimits, i.e., every $\lambda$-directed colimit of $\cm$-morphisms 
has the property that
\begin{enumerate}
\item a colimit cocone consists of $\cm$-morphisms, and
\item for every cocone of $\cm$-morphisms, the factorizing morphism is in $\cm$.
\end{enumerate}
\end{enumerate}
$(\ce,\cm)$ is \textit{convenient} if it is $\lambda$-convenient for some $\lambda$.
}
\end{defi}

\begin{rem}
{
\em
If $(\ce,\cm)$ is $\lambda$-convenient and $\lambda'\geq\lambda$ is regular then $(\ce,\cm)$ is $\lambda'$-convenient.
}
\end{rem}

\begin{defi}
\label{locgen}
{
\em
Let $\ck$ be a cocomplete category with a $\lambda$-convenient factorization system $(\ce,\cm)$ where $\lambda$ a is regular cardinal. We say that $\ck$ is \textit{$\cm$-locally $\lambda$-generated} if it has a set $\ca$ of $\lambda$-generated objects w.r.t. $\cm$ such that every object is a $\lambda$-directed colimit of objects from $\ca$ and morphisms from $\cm$. 

$\ck$ is called \textit {$\cm$-locally generated} if it is $\cm$-locally $\lambda$-generated for some regular cardinal $\lambda$.
}
\end{defi}

\begin{rem}\label{dense}
{
\em
In the same way as in II. (i) of the proof of \cite{AR} 1.70, we show that $\ca$ is dense in $\ck$.
Hence the canonical functor $E:\ck\to\Set^{\ca^{\op}}$ ( sending $K$ to $\ck(-,K)$ restricted on $\ca^{\op}$) is fully faithful. Since $\ck$ is cocomplete, $E$ is a right adjoint. Moreover, $E$ preserves $\lambda$-directed colimits of $\cm$-morphisms. Consequently, in a $\cm$-locally $\lambda$-generated category, $\lambda$-small limits commute with $\lambda$-directed colimits of $\cm$-morphisms.

}
\end{rem}

\begin{nota}
{
\em
In what follows, $\ck_\lambda$ will denote the full subcategory of $\ck$ consisting of $\lambda$-generated objects w.r.t. $\cm$ and by $\ce_\lambda$ the set $\ce\cap\ck_\lambda^\to$.
}
\end{nota}

\begin{exams}
{
\em
(1) The factorization system $(\Iso,\ck^\to)$ is convenient and $\ck$ is locally $\lambda$-presentable iff it is $\ck^\to$-locally
$\lambda$-generated.

(2) The category $\ck$ is $\Iso$-locally generated iff it is small. In fact, every object $K$ of $\ck$ has a morphism
$\coprod \ca\to K$ where the domain is the coproduct of all objects $A$ from $\ca$. Thus $(\ck^\to,\Iso)$ is convenient
iff $\ck$ is small.

(3) A $\Mono$-locally $\lambda$-generated category $\ck$ in the sense of \cite{GU} and \cite{AR} satisfies \ref{locgen} for the factorization system (strong epi, mono). Then this factorization system is $\lambda$-convenient. Indeed, following \ref{dense} the canonical functor $E:\ck\to\Set^{\ca^{\op}}$ is  fully faithful and preserves $\lambda$-directed colimits of monomorphisms. This implies that monomorphisms are closed in $\ck$
under $\lambda$-directed colimits (in the sense of \ref{conv}) because this holds in $\Set^{\ca^{\op}}$ and $E$ preserves and reflects monomorphisms. Now, we can show that $\ca$ is closed under strong quotients. Let $e:A\to B$ be a strong epimorphism with
$A\in\ca$, $m_i:K_i\to K$ a $\lambda$-directed colimit of monomorphisms and $f:B\to K$. Since $A$ is $\lambda$-generated,
there is $g:A\to K_i$ such that $m_ig=fe$. Since $m_i$ is a monomorphism, the diagonalization property yields $h:B\to K_i$
such that $m_ih=f$. Hence $B$ is $\lambda$-generated. Then II. in the proof of \cite{AR} 1.70 yields that $\ck$ is locally
presentable, hence cowellpowered. This verifies \ref{conv} (1), hence the (strong epi, mono) factorization system is 
$\lambda$-convenient. We gave this argument in detail because \cite{AR} is not accurate at this point.

(4) For a factorization system $(\Epi,\StrongMono)$, we get $\StrongMono$-locally $\lambda$-generated categories. Here, we have to assume that this factorization system is $\lambda$-convenient because it does not seem that we could get it for free like in (3).

(5) Any full reflective subcategory $\cl$ of $\ck$ determines a factorization system $(\ce,\cm)$ where $\ce$ consists of morphisms sent by the reflector to an isomorphism (see \cite{RT}). Both (1) and (2) are special cases for $\cl=\ck$ or $\cl$ consisting just from a terminal objects.
}
\end{exams}

\begin{exam}
{
\em
The category $\mathsf{CGWH}$ of compactly generated weakly Hausdorff spaces and continuous maps, has a factorization system (dense maps, closed embeddings). It is $\aleph_0$-convenient and \cite[3.8]{ST} shows that compact spaces are $\aleph_0$-generated with respect to this factorization system. \cite[1.11]{ST} can be taken as a reformulation of the fact that compact spaces form a dense subcategory of $\mathsf{CGWH}$. As a result, $\mathsf{CGWH}$ would be a class-locally generated category w.r.t. closed embeddings in the spirit of \cite{CR}, if only we had defined such notion. Yet, its behavior is incredibly similar to that of a locally generated category, as the only obstruction is that $\aleph_0$-generated objects form a proper class. This example finally clarifies the sense in which compact spaces are \textit{compact} objects, as the classical terminology of algebraic topology addresses finitely presentable objects.
}
\end{exam}

\begin{exam}
{
\em
A similar example is given by $\Delta$-generated spaces \cite{FR}, for the same choice of a factorization system. In this case though, the category is indeed locally finitely generated w.r.t. closed embeddings.
}
\end{exam}

%\begin{rem}
%{
%\em
%In principle we could have given a more general definition of convenient factorization system, %allowing for weak factorization systems instead then restricting to orthogonal one. Even if the% paper focuses on orthogonal one because it best suits our purposes, we find also this other direction very interesting, especially as it could potentially lead to new presentations of model-like categories. 
%}
%\end{rem}

\begin{nota}
{
\em
We say that a category is \textit{locally $\lambda$-generated} if it is $\cm$-locally $\lambda$-generated for some $\cm$. Locally $\lambda$-generated categories in the sense of \cite{GU} are then $\Mono$-locally $\lambda$-generated.
}
\end{nota}

\begin{lemma}\label{quot}
$\lambda$-generated objects w.r.t. $\cm$ are closed under $\ce$-quotients.
\end{lemma}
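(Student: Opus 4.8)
The plan is to unwind the definition. Given a morphism $e:A\to B$ in $\ce$ with $A$ being $\lambda$-generated w.r.t.\ $\cm$, I want to show that $\ck(B,-)$ preserves an arbitrary $\lambda$-directed colimit of $\cm$-morphisms, using that $\ck(A,-)$ does. So fix such a colimit $(m_i:K_i\to K)_{i\in I}$ with connecting morphisms $k_{ij}\in\cm$; here the cocone legs $m_i$ themselves lie in $\cm$, which is the single point at which the ambient $\lambda$-convenience of $(\ce,\cm)$ (Definition \ref{conv}) enters. The argument is then the factorization-system version of the reasoning already given for $(\StrongEpi,\Mono)$ earlier in this section: one replaces ``$m_i$ is a monomorphism'' by ``$m_i\in\cm$'' and invokes the diagonalization property of $(\ce,\cm)$.

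For surjectivity of the comparison map $\colim_{i\in I}\ck(B,K_i)\to\ck(B,K)$, I would take $f:B\to K$, form $fe:A\to K$, and use $\lambda$-generatedness of $A$ to factor $fe=m_ig$ through some $g:A\to K_i$. The resulting square with edges $e$, $m_i$, $g$, $f$ commutes, and since $e\in\ce$, $m_i\in\cm$, the diagonalization property yields $d:B\to K_i$ with $m_id=f$; thus $f$ lies in the image of $\ck(B,m_i)$.

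For injectivity, $\lambda$-directedness lets me reduce to a pair $d_1,d_2:B\to K_i$ with $m_id_1=m_id_2=:f$. Precomposing with $e$ gives $m_i(d_1e)=m_i(d_2e)$, so by $\lambda$-generatedness of $A$ there is $l\geq i$ with $k_{il}d_1e=k_{il}d_2e$. Then $k_{il}d_1$ and $k_{il}d_2$ are both diagonal fill-ins for the same commuting square (top edge $k_{il}d_1e=k_{il}d_2e:A\to K_l$, left edge $e$, right edge $m_l$, bottom edge $f$, using $m_lk_{il}=m_i$), so the uniqueness clause of the diagonalization property forces $k_{il}d_1=k_{il}d_2$; hence $d_1$ and $d_2$ represent the same element of the colimit.

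I expect the main obstacle to be precisely this last step. Because $(\ce,\cm)$ is an arbitrary factorization system, morphisms in $\ce$ need not be epimorphisms, so one cannot simply cancel $e$ from the equation $k_{il}d_1e=k_{il}d_2e$; it is the \emph{uniqueness} part of orthogonality $\ce\perp\cm$ that rescues the argument at this generality. Beyond that, the only thing to keep track of is that the colimit cocone consists of $\cm$-morphisms — implicit in the notion of a $\lambda$-directed colimit of $\cm$-morphisms under $\lambda$-convenience — while the remaining directed-colimit bookkeeping is routine.
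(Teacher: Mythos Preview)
Your proposal is correct and is essentially the paper's own proof: the surjectivity step via diagonalization against $m_i\in\cm$, and the injectivity step via the uniqueness clause of orthogonality applied to the square with left edge $e$ and right edge the cocone leg $m_j$ (your $m_l$), match the paper line for line. The only difference is expository: you make explicit that the cocone legs lie in $\cm$ by $\lambda$-convenience, which the paper uses without comment.
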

\begin{proof}
Let $e:A\to B$ is in $\ce$ and $A\in\ca$. Consider a $\lambda$-directed colimit $m_i:K_i\to K$, $i\in I$ of $\cm$-morphisms and $f:B\to K$. Since $A$ is $\lambda$-generated w.r.t. $\cm$, there is $g:A\to K_i$ such that $m_ig=fe$. Since $m_i$ is in $\cm$, 
the diagonalization property yields $h:B\to K_i$ such that $m_ih=f$. Assume that $m_ih'=f$ for $h':B\to K_i$.
Then $m_ihe=m_ih'e$ and, since $A$ is $\lambda$-generated w.r.t. $\cm$, there is $i\leq j\in I$ such that $m_{ij}he=m_{ij}h'e$.
Hence both $m_{ij}h$ and $m_{ij}h'$ are diagonals in the commutative square
$$
		\xymatrix@=2pc{
			A \ar [r]^{e}\ar[d]_{m_{ij}he} & B \ar[d]^{f}\\
			K_j  \ar [r]_{m_j}& K
		}
		$$ 
		and thus they are equal. Hence $B$ is $\lambda$-generated.

\end{proof}

\begin{lemma}\label{gen}
In an $\cm$-locally $\lambda$-generated category $\ck$, $\lambda$-generated objects w.r.t. $\cm$ are closed under $\lambda$-small colimits and, up to isomorphism, form a set.
\end{lemma}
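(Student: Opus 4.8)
The plan is to prove the two assertions in turn. For closure under $\lambda$-small colimits, let $D\colon\cd\to\ck$ be a $\lambda$-small diagram with each $D(d)\in\ck_\lambda$ and put $C=\colim D$, which exists since $\ck$ is cocomplete. Given any $\lambda$-directed colimit $m_i\colon K_i\to K$ of $\cm$-morphisms, we compute
\[
\ck(C,K)\;\cong\;\lim_{d}\ck(D(d),K)\;\cong\;\lim_{d}\colim_{i}\ck(D(d),K_i)\;\cong\;\colim_{i}\lim_{d}\ck(D(d),K_i)\;\cong\;\colim_{i}\ck(C,K_i),
\]
where the second isomorphism uses that every $D(d)$ is $\lambda$-generated w.r.t.\ $\cm$ and the third is the interchange of $\lambda$-small limits with $\lambda$-directed colimits in $\Set$; a routine check identifies the composite with the canonical comparison map, so $\ck(C,-)$ preserves the colimit and $C\in\ck_\lambda$. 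The empty colimit, i.e.\ the initial object, is covered by the same computation.

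For the second assertion, the crux is that \emph{every object of $\ck_\lambda$ is a retract of some object of $\ca$}. Indeed, let $B\in\ck_\lambda$. By Definition~\ref{locgen}, $B$ is a $\lambda$-directed colimit of a diagram of $\cm$-morphisms between objects of $\ca$, and by $\lambda$-convenience (\ref{conv}(2)(a)) the colimit cocone $a_i\colon A_i\to B$ also consists of $\cm$-morphisms. Since $\ck(B,-)$ preserves this colimit, the canonical bijection $\colim_i\ck(B,A_i)\to\ck(B,B)$ provides an index $i$ and a morphism $g\colon B\to A_i$ with $a_ig=\id_B$, so $B$ is a retract of $A_i$. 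It then suffices to bound, for a fixed $A$, the isomorphism classes of retracts of $A$: sending a retract $(r\colon A\to B,\ s\colon B\to A)$ with $rs=\id_B$ to the idempotent $sr\colon A\to A$ yields an injection into the set of idempotent endomorphisms of $A$ (two retracts of $A$ with the same associated idempotent are canonically isomorphic). Since $\ca$ is a set and $\ck$ is locally small, $\ck_\lambda$ has only a set of objects up to isomorphism.

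The only step that is not completely mechanical is the retract observation in the second part, and even that is short once one notes that $\lambda$-convenience forces the defining colimit cocone into $\cm$. Two points are worth flagging. First, this observation is genuinely stronger than its locally presentable counterpart, where a $\lambda$-presentable object is merely a retract of a $\lambda$-small colimit of generators; the upgrade to a retract of a single generator is exactly what directedness of the colimit buys. Second, one cannot deduce closure of $\ck_\lambda$ under retracts directly from Lemma~\ref{quot}, since a split epimorphism need not lie in $\ce$ --- e.g.\ for the factorization system $(\Iso,\ck^\to)$ --- which is why I argue by counting idempotents rather than by invoking \ref{quot}. (Closure of $\ck_\lambda$ under retracts does hold all the same, by an argument parallel to that of Lemma~\ref{quot}, but it is not needed for the statement.)
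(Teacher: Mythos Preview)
Your proof is correct and follows essentially the same route as the paper. The first part is identical; for the second, both you and the paper show that every $\lambda$-generated object is a retract of some object of $\ca$, and then bound the retracts---you via the injection into idempotent endomorphisms, the paper via the observation that a retract is a coequalizer of $\id_A$ and the associated idempotent, hence determined by a $\lambda$-small diagram in $\ca$. These are two phrasings of the same counting argument.
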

\begin{proof}
Let $\colim A_j$ be a $\lambda$-small colimit of $\lambda$-generated objects w.r.t. $\cm$ and $\colim K_i$ a $\lambda$-directed colimit of $\cm$-morphisms. Since $\lambda$-directed colimits commute in $\Set$ with $\lambda$-small limits, we have
 \begin{align*}
  \ck(\colim A_j,\colim K_i) &\cong \lim_j \ck(A_j,\colim K_i) \\
    &\cong \lim_j\colim_i\ck(A_j, K_i) \\
  &\cong \colim_i\lim_j \ck(A_j,K_i)  \\
 &\cong \colim_i \ck(\colim_j A, K_i)\\
   \end{align*}
Hence $\colim A_j$ is $\lambda$-generated w.r.t. $\cm$.
   
The second claim follows from the fact that every $\lambda$-generated object w.r.t. $\cm$ is is a $\lambda$-directed colimit of $\ca$-objects and $\cm$-morphisms, which makes it a retract of an $\ca$-object, i.e., a finite colimit of $\ca$-objects. Indeed, given a retract $u:B\to A$ with a section $s:A\to B$, then $s$ is a coequalizer of $us$ and $\id_A$.
\end{proof}

\begin{rem}\label{gen1}
{
\em
For the first claim, we do not need to assume that $\ck$ is $\cm$-locally $\lambda$-generated.
}
\end{rem}

\begin{nota}
{
\em
Given a family of arrows $\ce$, we call $\ce^\perp$ the family of arrows that are orthogonal to $\ce$. This means that an arrow $m$ belongs to $\ce^\perp$ if it has the unique right lifting property with respect to every arrow in $\ce$. An object $A$ is \textit{orthogonal} to $\ce$ if the unique morphism from $A$ to the terminal object is in $\ce^\perp$.
}
\end{nota}
\begin{lemma}\label{ort}
If $\ck$ is $\cm$-locally $\lambda$-generated then $\cm=(\ce_\lambda)^\perp$.
\end{lemma}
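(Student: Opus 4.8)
The plan is to prove the two inclusions separately. For $\cm\subseteq(\ce_\lambda)^\perp$: this direction is almost formal. Any $m\in\cm$ must be shown to have the unique right lifting property against every $e\colon A\to B$ in $\ce_\lambda=\ce\cap\ck_\lambda^\to$. But existence and uniqueness of diagonal fillers against \emph{all} of $\ce$ is exactly the orthogonality half of the factorization system $(\ce,\cm)$, and $\ce_\lambda\subseteq\ce$; so $\cm\subseteq\ce^\perp\subseteq(\ce_\lambda)^\perp$. (Here I use that in a factorization system $\cm=\ce^\perp$, which is standard.)

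For the reverse inclusion $(\ce_\lambda)^\perp\subseteq\cm$: take $m\colon X\to Y$ orthogonal to every arrow in $\ce_\lambda$ and factor it, using $(\ce,\cm)$, as $m=m'e'$ with $e'\colon X\to Z$ in $\ce$ and $m'\colon Z\to Y$ in $\cm$. The goal is to show $e'$ is an isomorphism, for then $m\cong m'\in\cm$. The standard trick is: since $m$ is orthogonal to $e'$ whenever $e'\in\ce_\lambda$, the square with $e'$ on the left, $m$ on the right, $\id_X$ on top-ish and the factorization data would split $e'$; so it suffices to exhibit $e'$ as a member of $\ce_\lambda$, or more precisely to reduce to the case where the domain and codomain of $e'$ are $\lambda$-generated w.r.t. $\cm$. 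This is where the local generation hypothesis enters: write $X=\colim_{i\in I} X_i$ as a $\lambda$-directed colimit of $\ca$-objects along $\cm$-morphisms, and correspondingly try to present $e'$ as a $\lambda$-directed colimit of morphisms $e'_i\colon X_i\to Z_i$ obtained by $(\ce,\cm)$-factoring the composites $X_i\to X\xrightarrow{e'} Z$.

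The key technical step — and the main obstacle — is controlling these factorizations: I need the $e'_i$ to land in $\ce_\lambda$ (so their domains \emph{and} codomains are $\lambda$-generated w.r.t. $\cm$), the connecting maps $Z_i\to Z_j$ to be in $\cm$, and $e'=\colim e'_i$. Closure of $\lambda$-generated objects under $\ce$-quotients (Lemma \ref{quot}) gives that each $Z_i$, being an $\ce$-quotient of the $\ca$-object $X_i$, is $\lambda$-generated w.r.t. $\cm$; that the connecting maps of the $Z_i$ are in $\cm$ follows from the diagonalization property applied to the $\ce$-$\cm$ squares (the $X_i\to X_j$ are in $\cm$, hence so are $X_i\to X_j\to Z_j$ after... no — rather one uses that the induced map on the $\ce$-part is forced, and the $\cm$-part composite lands correctly). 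Then, because $(\ce,\cm)$ is $\lambda$-convenient, $\cm$ is closed under $\lambda$-directed colimits, so $\colim m'_i\in\cm$; and $\ce$ is closed under $\lambda$-directed colimits of morphisms as well (this needs checking, but follows since $\ce=\,^\perp\cm$ and colimits of lifting problems), giving $e'=\colim e'_i\in\ce$ with each $e'_i\in\ce_\lambda$.

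Finally, orthogonality of $m$ to each $e'_i\in\ce_\lambda$ produces compatible diagonals splitting each $e'_i$; passing to the colimit and using uniqueness, these assemble to a splitting of $e'$, i.e. a map $Z\to X$ with $e'\cdot(\text{splitting})=\id$ and (by uniqueness in the orthogonality, or by a second diagonalization) $(\text{splitting})\cdot e'=\id_X$. Hence $e'$ is an isomorphism and $m\in\cm$. I expect the genuinely delicate points to be: (a) verifying that $\ce$ (not just $\cm$) is closed under the relevant $\lambda$-directed colimits so that $e'=\colim e'_i$ and this colimit is again in $\ce$; and (b) checking the compatibility of the diagonal fillers across the directed system so that uniqueness in the orthogonality condition lets them glue — this is exactly the place where "unique right lifting property" (rather than mere diagonalization) is used in an essential way.
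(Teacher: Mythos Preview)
Your proposal is correct and rests on the same core idea as the paper: an $\ce$-morphism can be exhibited as a $\lambda$-directed colimit in $\ck^\to$ of morphisms from $\ce_\lambda$, and anything right-orthogonal to $\ce_\lambda$ is then right-orthogonal to such a colimit. The organizational differences are worth noting. The paper never factors a given $m$; it works with an \emph{arbitrary} $e\colon A\to B$ in $\ce$, writes $e$ as a $\lambda$-directed colimit of morphisms $f_i$ in $\ck_\lambda^\to$ (decomposing \emph{both} $A$ and $B$ as $\lambda$-directed colimits of $\lambda$-generated objects along $\cm$-maps), factors each $f_i=m_ie_i$ with $e_i\in\ce_\lambda$, and then uses $\lambda$-convenience to conclude that $\colim m_i\in\cm$, hence is an isomorphism since $e\in\ce$; thus $e=\colim e_i$. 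This yields $\ce\subseteq\colim(\ce_\lambda)$ and therefore $(\ce_\lambda)^\perp\subseteq\ce^\perp=\cm$ in one stroke, with no need for your final ``split $e'$ and show it is iso'' step. Your one-sided decomposition (only the domain $X$) makes the verification of $e'=\colim e'_i$ --- your point (a) --- a bit less transparent than in the paper's two-sided version, though it can be completed by the same appeal to $\lambda$-convenience; your point (b) becomes entirely unnecessary in the paper's arrangement.
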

\begin{proof}
Every $e:A\to B$ in $\ce$ is a $\lambda$-directed colimit $(a_i,b_i):f_i\to e$ in $\ck_\lambda^\to$. Every $f_i:A_i\to B_i$ has the factorization $f_i=m_ie_i$ with 
$e_i:A_i\to C_i$ in $\ce$ and $m_i:C_i\to B_i$ in $\cm$. Then $e=\colim m_i\cdot\colim e_i$
where $\colim e_i:A\to C$ and $\colim m_i:C\to B$. Since $C=\colim C_i$ is a $\lambda$-directed
colimit of $\cm$-morphisms and $(\ce,\cm)$ is $\lambda$-convenient, $\colim m_i$ is in $\cm$
and thus it is an isomorphism. Hence $e=\colim e_i$. Consequently $\cm=\ce_\lambda^\perp$.
\end{proof}

\begin{rem}\label{conv1}
{
\em
We have shown that, in a $\cm$-locally $\lambda$-generated category $\ck$, $\ce$ is the closure $\colim(\ce_\lambda)$ of $\ce_\lambda$ under all colimits in $\ck^\to$.

Conversely, if $\cm=\ce_\lambda^\perp$ then $\cm$ satisfies \ref{conv}(2).
}
\end{rem}

The proof of the next theorem follows \cite{AR1} and, like \cite{AR1}, is based on \cite{GU}. However, \cite{AR1} lacks
the assumption that $(\ce,\cm)$ is convenient.

\begin{theo}\label{char}
For every category $\ck$ equivalent are:
\begin{enumerate}
\item $\ck$ is locally presentable,
\item $\ck$ is $\cm$-locally generated for some convenient factorization system $(\ce,\cm)$, and
\item $\ck$ is $\cm$-locally generated for every convenient factorization system $(\ce,\cm)$.
\end{enumerate}
\end{theo}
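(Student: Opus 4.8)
The plan is to run the cycle $(1)\Rightarrow(3)\Rightarrow(2)\Rightarrow(1)$; only the first implication carries real content, and it is there that $\lambda$-convenience is used. The implication $(3)\Rightarrow(2)$ is essentially free: $(\Iso,\ck^\to)$ is a factorization system on any category, it is $\lambda$-convenient for every $\lambda$ (each object has a single $\Iso$-quotient, and $\ck^\to$ is vacuously closed under directed colimits), and, by the examples above, being $\ck^\to$-locally generated is the same as being locally presentable; so $(3)$, instantiated at $(\Iso,\ck^\to)$, delivers $(2)$ (in fact $(1)$). For $(2)\Rightarrow(1)$ I would argue as in \cite{AR1} (which adapts \cite{GU}): by Remark \ref{dense} the set $\ca$ is dense, so $E\colon\ck\to\Set^{\ca^{\op}}$ is fully faithful and, $\ck$ being cocomplete, reflective; using that $\ck_\lambda$ is essentially small and closed under $\lambda$-small colimits (Lemma \ref{gen}) and that $\cm=\ce_\lambda^\perp$ (Lemma \ref{ort}), one shows that $E$ preserves $\mu$-filtered colimits for a suitable regular $\mu$, so that $\ck$ is a reflective, accessibly embedded subcategory of a locally presentable category, hence --- being cocomplete --- locally presentable.

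For $(1)\Rightarrow(3)$, let $\ck$ be locally $\mu_0$-presentable and $(\ce,\cm)$ $\lambda$-convenient, and fix a regular $\kappa\geq\mu_0,\lambda$, so that $\ck$ is locally $\kappa$-presentable and $(\ce,\cm)$ is $\kappa$-convenient. First I would show that $\ck_\kappa$ is essentially small, so that one may take $\ca$ to be a set of representatives. Given $C\in\ck_\kappa$, present $C=\colim_{j\in J}P_j$ as a $\kappa$-directed colimit of $\kappa$-presentable objects with colimit maps $c_j$, and factor $c_j=m_je_j$ with $e_j\colon P_j\to Q_j$ in $\ce$ and $m_j\colon Q_j\to C$ in $\cm$. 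Functoriality of the factorization makes $(Q_j)_{j\in J}$ a $\kappa$-directed diagram whose transition maps lie in $\cm$ (apply right cancellation of $\cm$ to $m_{j'}t_{jj'}=m_j$); by $\kappa$-convenience its colimit $L=\colim_JQ_j$ has colimit maps in $\cm$, the cocone $(m_j)$ induces $\gamma'\colon L\to C$ in $\cm$, and the cocone $(P_j\xrightarrow{e_j}Q_j\to L)$ over $(P_j)_{j\in J}$ induces $\delta'\colon C\to L$ with $\gamma'\delta'=\id_C$. Since $C$ is $\kappa$-generated with respect to $\cm$ and $L$ is a $\kappa$-directed colimit of $\cm$-morphisms, $\delta'$ factors through some colimit map $Q_j\to L$, and the resulting map $C\to Q_j$ is a section of $m_j$; thus $C$ is a retract of the $\ce$-quotient $Q_j$ of the $\kappa$-presentable $P_j$. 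As $\ck$ is $\ce$-cowellpowered there are, up to isomorphism, only a set of such retracts.

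Now fix $K$ and let $\cc_K$ be the category of pairs $(A,m)$ with $A\in\ck_\kappa$ and $m\colon A\to K$ in $\cm$, morphisms being the maps over $K$ --- automatically in $\cm$ by right cancellation. Using that $\ck_\kappa$ is closed under $\ce$-quotients (Lemma \ref{quot}) and under $\kappa$-small colimits (Remark \ref{gen1}), together with diagonalization, one checks that $\cc_K$ is $\kappa$-filtered: $<\kappa$ objects are dominated by factoring the map out of their coproduct, and a parallel pair is coequalized by factoring the map out of a coequalizer. It remains to identify the colimit. The comparison $\gamma\colon\colim\cc_K\to K$ is a split epimorphism --- a section is assembled from the objects $(Q_j,m_j)\in\cc_K$ attached to a $\kappa$-presentable presentation $K=\colim_JP_j$, exactly as above --- and it is a monomorphism: testing against a $\kappa$-presentable $X$, any $u,v\colon X\to\colim\cc_K$ factor through a common colimit leg of $\cc_K$, and if $\gamma u=\gamma v$ a further coequalize-and-factor step inside $\cc_K$ makes $u$ and $v$ equal. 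Hence $\gamma$ is invertible, so every $K$ is a $\kappa$-filtered --- equivalently, after reindexing by a cofinal $\kappa$-directed poset, a $\kappa$-directed --- colimit of $\ca$-objects and $\cm$-morphisms, and $\ck$ is $\cm$-locally $\kappa$-generated.

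The hard part is this last colimit identification: that $\gamma$ is an isomorphism. This is exactly where $\lambda$-convenience is indispensable, and where \cite{AR1} is incomplete. Its first clause ($\ce$-cowellpoweredness) is what makes $\ck_\kappa$, hence $\ca$, a set; its second clause (closure of $\cm$ under $\kappa$-directed colimits) is what forces the colimit legs and the comparison maps $\gamma'$, $\gamma$ into $\cm$, and without this neither the construction of the section nor the reduction of $\gamma$ to a monomorphism (which rests on lifting maps through $\cm$-morphisms) goes through. The remaining ingredients --- functoriality of the $(\ce,\cm)$-factorization, the $\kappa$-filteredness bookkeeping for $\cc_K$, and the replacement of a $\kappa$-filtered colimit by a $\kappa$-directed one --- are routine.
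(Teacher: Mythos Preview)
Your $(1)\Rightarrow(3)$ is correct and is, in fact, considerably more detailed than the paper's, which simply says the implication is ``analogous to I.\ of the proof of \cite{AR} 1.70''. The retract argument for essential smallness of $\ck_\kappa$, the $\kappa$-filteredness of $\cc_K$, and the split-epi/mono verification of $\gamma$ are all sound, and your emphasis on where $\kappa$-convenience enters is accurate.

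The gap is in $(2)\Rightarrow(1)$, and this is where you have misjudged the weight of the argument. You claim that from the essential smallness of $\ck_\lambda$, its closure under $\lambda$-small colimits, and $\cm=\ce_\lambda^\perp$, ``one shows that $E$ preserves $\mu$-filtered colimits for a suitable regular $\mu$''. But $E$ preserving $\mu$-filtered colimits is equivalent to every $A\in\ca$ being $\mu$-presentable in $\ck$, and nothing in your listed ingredients delivers this: $\lambda$-generatedness w.r.t.\ $\cm$ gives control only over $\lambda$-directed colimits of $\cm$-morphisms, not over arbitrary $\mu$-filtered colimits. The conclusion you want is of course true \emph{a posteriori}, once $\ck$ is known to be locally presentable, but invoking it here is circular.

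The paper's route --- which is also what \cite{AR1} actually does --- is different. One works inside $\cl=\Set^{\ca^{\op}}$ and exhibits $\ck$ as the orthogonality class $\Ort(\cp)$ for a \emph{small} set $\cp$ of reflection maps: reflections of $\lambda$-small colimits of $\ca$-diagrams, and reflections of multiple pushouts in $\cl$ of $\ce$-morphisms with domain in $\ca$. That $\Ort(\cp)$ is locally presentable is immediate. The substance is the converse inclusion $\Ort(\cp)\subseteq\ck$: for $L\in\Ort(\cp)$ and $f\colon A\to L$ with $A\in\ca$, one takes the cointersection $e_f\colon A\to K_f$ in $\ck$ of all $\ce$-morphisms through which $f$ factors, obtaining an $\ce$-extremal factorization $f=m_fe_f$; one checks this is functorial in $f$, and then expresses $L$ as a $\lambda$-directed colimit of these $K_f$'s along $\cm$-morphisms, placing $L$ in $\ck$. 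This orthogonality argument is the real content of the theorem, and it is not reducible to an accessible-embedding statement about $E$.
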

\begin{proof}
Clearly, $(3)\Rightarrow (1)\Rightarrow (2)$. The implication $(1)\Rightarrow (3)$ is analogous to I. of the proof \cite{AR} 1.70.

$(2)\Rightarrow (1)$ follows \cite{AR1}. In fact, let $\ck$ be $\cm$-locally $\lambda$-generated. Then $E:\ck\to\Set^{\ca^{\op}}$ makes $\ck$ equivalent to a full reflective subcategory of $\cl=\Set^{\ca^{\op}}$ closed under $\lambda$-directed colimits of $\cm$-morphisms (see \ref{dense}). Let $\cp$ consist of reflections of $\cl$-objects in $\ck$ which are either $\lambda$-small colimits in $\cl$ of diagrams in $\ca$ or codomains of multiple pushouts in $\cl$ of $\ce$-morphisms with a domain in $\ca$. Observe that $\cp$ is small: the case of $\lambda$-small colimits is clear, for the multiple pushout use \ref{quot} and the fact that every $\lambda$-generated object w.r.t. $\cm$ is a retract of an $\ca$-object. Thus, the class $\Ort(\cp)$ of all $\cl$-objects orthogonal to $\cp$ is a locally presentable category. Moreover, $\ck$ is closed in $\Ort(\cp)$ under
\begin{enumerate}
\item $\lambda$-small colimits of diagrams in $\ca$,
\item multiple pushouts of $\ce$-morphisms with a domain in $\ca$, and
\item $\lambda$-directed colimits of $\cm$-morphisms.
\end{enumerate}

Consider a morphism $f:A\to L$ with $A$ in $\ca$ and $L$ in $\Ort(\cp)$. Let $e_f:A\to K_f$ be the cointersection in $\ck$ of all $\ce$-morphisms through which $f$ factors. Following (2) above,
there is $m_f:K_f\to L$ such that $f=m_fe_f$. Clearly, $e_f$ is in $\ce$ and $m_f$ is $\ce$-extremal in the sense that any $\ce$-morphism $K_f\to K$ through which $m_f$ factors is an isomorphism. Moreover, this factorization is "functorial": given $f':A'\to L$ and $h:A\to A'$ with $f=f'h$ there exists $h^\ast:K_f\to K_{f'}$ in $\cm$ with $m_f=m_{f'}h^\ast$. In fact, form 
a pushout 
$$
		\xymatrix@=2pc{
			A \ar [r]^{e_{f'}h}\ar[d]_{e_f} & K_{f'} \ar[d]^{\tilde{e}_f}\\
			K_f  \ar [r]_{h^\ast}& K
		}
		$$ 
There is $g:K\to L$ such that $gh^\ast=m_f$ and $g\tilde{e}_f=m_{f'}$. Since $\tilde{e}_f$ is in $\ce$ and $m_{f'}$ is $\ce$-extremal, $\tilde{e}_f$ is an isomorphism. 	

Every $L$ in $\Ort(\cp)$ is a $\lambda$-directed colimit $d_i:D_i\to L$ where $D_i$ are $\lambda$-small colimits of diagrams in $\ca$. Form reflections $r_i:D_i\to D'_i$ in $\ck$. Since $r_i\in\cp$, $d_i$ factors through $r_i$, say, $d_i=d_i'r_i$. Since $D_i'$ is in $\ca$, we can take the factorization $d_i'=m_ie_i$ above with $m_i:D_i''\to L$. Since $m_i$ is $\ce$-extremal, $d_i''$ is in $\cm$. Thus $m_i:D_i''\to L$ is a $\lambda$-directed colimit of $\cm$-morphisms, which implies that $L$ is in $\ck$. 

We have proved that $\ck=\Ort(\cp)$, hence $\ck$ is locally presentable. 	
\end{proof}

\begin{rem}\label{lambda}
{
\em
(1) All implications, except $(2)\Rightarrow (1)$, are valid for a given $\lambda$. In $(2)\Rightarrow (1)$, $\lambda$ can increase.

(2) The previous theorem sheds a light on the notion of locally generated category. Indeed, the framework of locally generated categories is not more expressive than that of locally presentable ones. Yet, as pointed out (1), it might happen that a category is locally finitely generated without being locally finitely presentable. From the point of view of essentially algebraic theories (see 3.D in \cite{AR}), this means that some $\lambda$-ary essentially algebraic theories can be axiomatized by the data of a finite limit theory and a factorization system. Thus, the complexity of infinitary (partial) operations can be hidden under the carpet of the factorization system. From a less technical perspective, the factorization system that makes the category locally generated is very often an extremely natural one to consider. A sketch-oriented  (see 2.F in \cite{AR}) interpretation of this discussion is the core motivation for the forthcoming notion of $\lambda$-nest.
}
\end{rem}

\begin{rem}
{
\em
We do not know whether, given a locally $\lambda$-presentable category $\ck$, one can find an $\aleph_0$-convenient factorization system $(\ce,\cm)$ such that $\ck$ is $\cm$-locally finitely generated. Following \ref{dense}, finite limits should commute in $\ck$ with directed colimits of $\cm$-morphisms.
}
\end{rem}
 
\begin{nota}
{
\em
$\Gen_\lambda(\ck)$ will denote the full subcategory of $\lambda$-generated objects w.r.t. $\cm$ in a $\cm$-locally $\lambda$-generated category $\ck$.
}
\end{nota}

\begin{rem}
{
\em
A multiple pullback $P$ is a limit of a diagram consisting of morphisms $f_i:A_i\to A$, $i\in I$. We can well-order $I$ as $\{i_0,i_1,\dots,i_j\dots\}$ and form pullbacks $P_j$ as follows: $P_0$ is the pullback 
$$
		\xymatrix@=2pc{
			P_0 \ar [r]^{\bar{f}_0}\ar[d]_{\bar{f}_1} & A_0 \ar[d]^{f_0}\\
			A_1  \ar [r]_{f_1}& A
		}
$$ 	
 
Then $P_1$ is the pullback	
$$
		\xymatrix@=2pc{
			P_1 \ar [r]^{p_{01}}\ar[d]_{} & P_0 \ar[d]^{f_0\bar{f}_1}\\
			A_2  \ar [r]_{f_2}& A
		}
        $$ 
We proceed by recursion and in limit steps we take limits. In this way, we transform multiple pullbacks to limits of smooth well-ordered chains (\textit{smooth} means that in limit steps we have limits).  

Conversely, a limit of a smooth chain $p_{ij}:P_j\to P_i$, $i\leq j<\alpha$ is a multiple pullback of $p_{0j}$, $j\in I$. Moreover, if $\ck$ is equipped with a factorization system $(\ce,\cm)$ then multiple pullbacks of $\cm$-morphisms correspond to limits of smooth well-ordered chains of $\cm$-morphisms. Indeed, $\cm$ is stable under pullbacks.
}
\end{rem}

\begin{coro}\label{sketch}
Let $\ck$ be an $\cm$-locally $\lambda$-generated category. Then $\ck $ is equivalent to the full subcategory of $\Set^{\Gen_\lambda(\ck)^{\op}}$ consisting of functors preserving $\lambda$-small limits and sending multiple pushouts of $\ce$-morphisms to multiple pullbacks.
\end{coro}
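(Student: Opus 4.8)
The plan is to reuse the orthogonality presentation of $\ck$ produced inside the proof of Theorem~\ref{char} and to match it, via the Yoneda lemma, against the two closure conditions appearing in the statement.

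First I would take $\ca=\Gen_\lambda(\ck)$, which is essentially small by Lemma~\ref{gen}; since every witnessing set for $\cm$-local $\lambda$-generation is contained in it, $\ca$ itself witnesses that $\ck$ is $\cm$-locally $\lambda$-generated, so Theorem~\ref{char} and its proof apply with this choice of $\ca$. By Remark~\ref{dense} the canonical functor $E:\ck\to\cl:=\Set^{\ca^{\op}}$, $K\mapsto\ck(-,K)|_{\ca}$, is fully faithful and a right adjoint; write $R$ for its left adjoint. As $\ca$ is a full subcategory of $\ck$, $E$ restricts on $\ca$ to the Yoneda embedding $y:=E|_{\ca}:\ca\to\cl$; hence $RE\cong\Id_\ck$, in particular $Ry\cong\Id_{\ca}$, and $R$ is cocontinuous. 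Two stability facts are then needed: (a) $\ca$ is closed in $\ck$ under $\lambda$-small colimits (Lemma~\ref{gen}), so these agree with $\lambda$-small colimits formed in $\ck$; (b) if $e_i:A\to B_i$ ($i\in I$) are $\ce$-morphisms with $A\in\ca$, then each $B_i$ lies in $\ca$ by Lemma~\ref{quot}, the leg $A\to P$ of the multiple pushout is obtained from the $e_i$ by pushouts and transfinite composites and hence lies in $\ce$, so $P\in\ca$ again by Lemma~\ref{quot}, and, $\ca$ being full, this multiple pushout is already computed in $\ca$. Since hom-functors carry colimits to limits, (a) and (b) show that $E$ lands in the full subcategory $\cs\subseteq\cl$ described in the statement.

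The heart of the argument is to identify $\cs$ with $\Ort(\cp)$, where $\cp$ is the small set occurring in the proof of Theorem~\ref{char}, namely the reflection maps $\eta_X:X\to RX$ for $X$ ranging over $\lambda$-small colimits in $\cl$ of diagrams in $\ca$ and over multiple pushouts in $\cl$ of $\ce$-morphisms with domain in $\ca$. Using $Ry\cong\Id_{\ca}$, cocontinuity of $R$, and (a),(b): for $X=\colim_D y(A_j)$ with $D$ $\lambda$-small one gets $RX\cong\colim_D A_j\in\ca$ and $\eta_X$ is, up to isomorphism, the canonical comparison $\colim_D y(A_j)\to y(\colim_D A_j)$; for $X$ the multiple pushout in $\cl$ of the $y(e_i)$ one gets $RX\cong P\in\ca$, the multiple pushout in $\ck$ of the $e_i$, and $\eta_X$ the canonical map $X\to y(P)$. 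By the Yoneda lemma, $F\in\cl$ is orthogonal to a map of the first kind iff the comparison $F(\colim_D A_j)\to\lim_D F(A_j)$ is a bijection, and to a map of the second kind iff $F(P)$ is the multiple pullback of the maps $F(B_i)\to F(A)$; letting $D$ and the families $(e_i)$ vary, these conditions say precisely that $F$ preserves $\lambda$-small limits, respectively that $F$ sends multiple pushouts of $\ce$-morphisms to multiple pullbacks. Hence $\Ort(\cp)=\cs$.

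Finally, the proof of Theorem~\ref{char} identifies $\ck$, via $E$, with the full subcategory $\Ort(\cp)$ of $\cl$; combined with $\Ort(\cp)=\cs$ and the full faithfulness of $E$, this makes $E:\ck\to\cs$ an equivalence, which is the assertion. I expect the delicate point to be the identification of $\cp$ with the canonical comparison maps: one must check that the reflector is precisely the left adjoint of $E$ (so that $RE\cong\Id$ and $R$ is cocontinuous) and, crucially, that $\lambda$-small colimits and multiple $\ce$-pushouts with domain in $\ca$, when formed in $\ck$, land back in $\Gen_\lambda(\ck)$ — this is exactly where Lemma~\ref{quot} and the stability of $\ce$ under pushouts and transfinite composites are used. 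Everything else is bookkeeping with the Yoneda lemma.
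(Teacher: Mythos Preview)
Your proposal is correct and follows essentially the same approach as the paper: identify the described subcategory with $\Ort(\cp)$ from the proof of Theorem~\ref{char}. The paper compresses this into a single citation of \cite{AR}~1.33(8), whereas you have unpacked that reference explicitly via the Yoneda lemma, together with the closure facts from Lemmas~\ref{gen} and~\ref{quot} needed to see that the reflections in $\cp$ are exactly the canonical comparison maps; the substance is the same.
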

\begin{proof}
Following \cite{AR} 1.33(8), this describes $\Ort(\cp)$ from the proof of \ref{char}.
\end{proof}

The following theorem makes easier to identify $\cm$-locally generated categories among cocomplete categories with a factorization system and goes back to \cite{GU}.

\begin{theo}\label{char1}
A cocomplete category $\ck$ equipped with a $\lambda$-convenient factorization system $(\ce,\cm)$ is $\cm$-locally $\lambda$-generated iff it has a strong generator formed by $\lambda$-generated objects w.r.t. $\cm$.
\end{theo}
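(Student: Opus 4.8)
I would prove the two implications separately; the forward one is essentially immediate, and the converse is the classical Gabriel--Ulmer recognition argument (cf.\ \cite{GU} and the proof of the locally presentable analogue in \cite{AR}), adapted so as to keep the whole comparison diagram inside $\cm$.

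\emph{Forward direction.} If $\ck$ is $\cm$-locally $\lambda$-generated, I take the set $\ca$ of $\lambda$-generated objects w.r.t.\ $\cm$ supplied by Definition~\ref{locgen}. By Remark~\ref{dense}, $\ca$ is dense in $\ck$; a small dense subcategory is in particular a strong generator (the restricted Yoneda embedding $\ck\to\Set^{\ca^{\op}}$ is fully faithful, hence faithful and conservative), so $\ca$ witnesses the condition.

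\emph{Converse.} Assume $\cg$ is a strong generator consisting of $\lambda$-generated objects w.r.t.\ $\cm$. First I fix the set of building blocks: let $\ca$ be the closure of $\cg$ under $\lambda$-small colimits and under $\ce$-quotients. This class is essentially small -- iterate the two operations $\omega$ times; $\lambda$-small colimits over an essentially small family of objects stay essentially small, and each object has only a set of $\ce$-quotients because $(\ce,\cm)$ is $\lambda$-convenient, hence $\ce$-cowellpowered -- and it consists of $\lambda$-generated objects w.r.t.\ $\cm$ by the first claim of Lemma~\ref{gen} (which, by Remark~\ref{gen1}, does not presuppose the conclusion we are after) together with Lemma~\ref{quot}. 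It then suffices to show every $K\in\ck$ is a $\lambda$-directed colimit of $\ca$-objects along $\cm$-morphisms. To this end I form the comma-type category $\cd_K$ whose objects are the pairs $(A,m)$ with $A\in\ca$ and $m\colon A\to K$ in $\cm$, and whose arrows $(A,m)\to(A',m')$ are the $f\colon A\to A'$ with $m'f=m$; a small but crucial point is that any such $f$ automatically lies in $\cm$, since factoring $f=m_1e_1$ and comparing the two $(\cm,\ce)$-factorizations $m=(m'm_1)e_1=m\circ\id$ of $m$ forces $e_1$ to be invertible by uniqueness of factorizations. Write $D_K\colon\cd_K\to\ck$ for the projection $(A,m)\mapsto A$, which carries the canonical cocone with legs $m$ onto $K$.

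Next I check that $\cd_K$ is $\lambda$-filtered: it is nonempty; given $<\lambda$ objects $(A_i,m_i)$, factoring the induced $\coprod A_i\to K$ as $\coprod A_i\xrightarrow{e}B\xrightarrow{m}K$ with $m\in\cm$ yields $B\in\ca$ (an $\ce$-quotient of the $\lambda$-small colimit $\coprod A_i$) together with a cocone $(A_i,m_i)\to(B,m)$; and given a $<\lambda$-family of parallel arrows $f_i\colon(A,m)\to(A',m')$, coequalizing the $f_i$ by $q\colon A'\to Q\in\ca$, then factoring the induced $Q\to K$ as $\bar n\bar e$ with $\bar n\in\cm$, produces $\bar e q\colon(A',m')\to(\bar Q,\bar n)$ coequalizing them. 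Passing to a final $\lambda$-directed poset I form $C=\colim D_K$; because $(\ce,\cm)$ is $\lambda$-convenient the colimit legs $\ell_{(A,m)}\colon A\to C$ lie in $\cm$, and the comparison $c\colon C\to K$ -- the factorization through $C$ of the cocone of $\cm$-morphisms $(m)_{(A,m)}$ -- lies in $\cm$ as well. Finally I show $c$ is an isomorphism; since $\cg$ is a strong generator it suffices that $\ck(G,c)$ be bijective for each $G\in\cg$. For surjectivity: factor $g\colon G\to K$ as $\bar g e$ with $\bar g\colon\bar G\to K$ in $\cm$; then $\bar G\in\ca$ and $c\,\ell_{(\bar G,\bar g)}=\bar g$, so $g=c\,(\ell_{(\bar G,\bar g)}e)$. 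For injectivity: if $ch_1=ch_2$, then since $G$ is $\lambda$-generated w.r.t.\ $\cm$ and $C$ is a $\lambda$-directed colimit of $\cm$-morphisms one may factor $h_j=\ell_{(A,m)}u_j$ through a single stage $(A,m)$ and deduce $mu_1=mu_2$; factoring $v:=mu_1$ as $\bar v\epsilon$ with $\bar v\colon\bar G'\to K$ in $\cm$ and diagonalizing the commutative squares with sides $\epsilon,u_j,m,\bar v$ yields $d_j\colon\bar G'\to A$ with $d_j\epsilon=u_j$ and $md_j=\bar v$, i.e.\ a genuine parallel pair $d_1,d_2\colon(\bar G',\bar v)\to(A,m)$ in $\cd_K$; by the coequalizing property there is $w$ with $wd_1=wd_2$, hence $\ell_{(A,m)}d_1=\ell_{(A,m)}d_2$, and therefore $h_1=\ell_{(A,m)}d_1\epsilon=\ell_{(A,m)}d_2\epsilon=h_2$. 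Thus $K\cong C$ has the required form and $\ck$ is $\cm$-locally $\lambda$-generated with witness $\ca$.

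The hard part is keeping the comparison diagram inside $\cm$. Two places need care: that the morphisms of $\cd_K$ lie automatically in $\cm$, so that $\lambda$-convenience can be invoked for $C=\colim D_K$ and for $c\colon C\to K$; and, in the injectivity step, that a pair $u_1,u_2\colon G\to A$ with $mu_1=mu_2$ -- which need \emph{not} be a parallel pair in $\cd_K$, because $mu_1$ may fail to be in $\cm$ -- can be converted, by factoring $mu_1$ and diagonalizing, into an honest parallel pair of $\cd_K$-morphisms on which the coequalizing property of a $\lambda$-filtered category can act. Everything else is the standard recognition argument via a strong generator.
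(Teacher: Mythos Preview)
Your proof is correct and follows the same strategy as the paper: close the strong generator under $\lambda$-small colimits and $\ce$-quotients via Lemmas~\ref{gen} (with Remark~\ref{gen1}) and~\ref{quot}, then exhibit each object as a $\lambda$-directed colimit of the resulting $\ca$-objects along $\cm$-morphisms. The paper is terser---it defers the colimit step to the proof of \cite{AR}~1.11 (working with the full slice $\ca\downarrow K$) and then says ``using $(\ce,\cm)$-factorizations'' to pass to $\cm$-morphisms---whereas you work directly with the $\cm$-slice $\cd_K$ from the outset; the two extra points you flag (that morphisms of $\cd_K$ lie automatically in $\cm$, and the diagonalization trick converting $u_1,u_2$ into a genuine parallel pair in $\cd_K$) are precisely the care this more direct route requires, and both are handled correctly.
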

\begin{proof}
Using \ref{gen} and \ref{quot}, the closure $\ca$ of our strong generator under $\lambda$-small colimits and $\ce$-quotients consists of $\lambda$-generated objects w.r.t. $\cm$. Following the proof of \cite{AR} 1.11, we show that every object in $\ck$ is a $\lambda$-filtered colimit of $\ca$-objects. Using $(\ce,\cm)$-factorizations, we get that every object of $\ck$ is a $\lambda$-directed colimit of $\ca$-objects and $\cm$-morphisms.
\end{proof}

\begin{rem}
{
\em
If $\lambda'\geq\lambda$ and $\ck$ is $\cm$-locally $\lambda$-generated then $\ck$ is $\cm$-locally $\lambda'$-generated.
}
\end{rem}

\begin{theo}\label{monad}
Let $\ck$ be a $\cm$-locally $\lambda$-generated category where $(\ce,\cm)$ is a proper $\lambda$-convenient factorization system and let $T$ be a monad preserving $\cm$-morphisms and $\lambda$-directed colimits of $\cm$-morphisms. Then, assuming Vop\v enka's principle, the category of algebras $\mathsf{Alg}(T)$ is locally $\lambda$-generated.
\end{theo}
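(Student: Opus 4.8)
The plan is to lift the factorization system $(\ce,\cm)$ along the monadic forgetful functor $U:\mathsf{Alg}(T)\to\ck$ and then apply Theorem~\ref{char1}. By Theorem~\ref{char}, $\ck$ is locally presentable; in particular it is complete, cocomplete, well-powered and co-wellpowered, it carries the dense (hence strong) generator $\ca$ of Definition~\ref{locgen}, and $\ce\subseteq\Epi$, $\cm\subseteq\Mono$ since $(\ce,\cm)$ is proper. As $U$ creates limits, $\mathsf{Alg}(T)$ is complete and well-powered, and it is cocomplete because $\ck$ is cocomplete and co-wellpowered (via the classical construction of coequalizers and coproducts of $T$-algebras). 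Write $F$ for the left adjoint of $U$.

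Set $\cm':=U^{-1}(\cm)$, the class of $T$-algebra morphisms whose underlying morphism lies in $\cm$, and $\ce':={}^\perp\cm'$. The hypotheses on $T$ enter precisely in showing that $U$ creates $\lambda$-directed colimits of $\cm'$-morphisms: given such a diagram, its colimit $L$ in $\ck$ has a colimit cocone of $\cm$-morphisms by $\lambda$-convenience; applying $T$ yields again a colimit cocone of $\cm$-morphisms, since $T$ preserves $\cm$-morphisms and $\lambda$-directed colimits of them; hence $L$ carries an algebra structure, namely the colimit of the structure maps, and the universal property is verified by precomposing with the colimit cocone. It follows that $\cm'$ satisfies \ref{conv}(2) in $\mathsf{Alg}(T)$, because the colimit cocone and the factorizing morphisms of any $\lambda$-directed cocone of $\cm'$-morphisms have their $U$-images in $\cm$ by $\lambda$-convenience of $(\ce,\cm)$.

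I would then check that $(\ce',\cm')$ is a $\lambda$-convenient factorization system. That it is a factorization system is routine given that $\mathsf{Alg}(T)$ is complete and well-powered and $\cm\subseteq\Mono$: for a morphism $f:X\to Y$, the intersection $\bar m:\bar Z\to Y$ of all $\cm'$-subobjects of $Y$ through which $f$ factors belongs to $\cm'$ (because $U$ creates limits and $\cm$ is closed under limits in $\ck^\to$), $f=\bar m\bar g$, a pullback-minimality argument gives $\bar g\in{}^\perp\cm'=\ce'$, and the customary retract argument yields $\cm'=(\ce')^\perp$. For $\lambda$-convenience, $\cm'$ is closed under $\lambda$-directed colimits by the previous paragraph, so only $\ce'$-cowellpoweredness remains. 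Here properness enters: $\cm=\ce^\perp\supseteq\Epi^\perp$ contains every regular monomorphism of $\ck$, so $\cm'$ contains every regular monomorphism of $\mathsf{Alg}(T)$ (again as $U$ creates limits), and therefore every morphism in $\ce'={}^\perp\cm'$ is epic. Thus $\ce'$-cowellpoweredness follows once $\mathsf{Alg}(T)$ is co-wellpowered, and this is the single place where Vop\v enka's principle is invoked: under Vop\v enka's principle, every cocomplete category with a generator is co-wellpowered. Isolating this use of the large-cardinal hypothesis, and making sure that nothing else in the argument covertly requires it, is the step I expect to be the main obstacle.

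Finally, the free algebras $FA$ with $A\in\ca$ are $\lambda$-generated w.r.t. $\cm'$, since $\mathsf{Alg}(T)(FA,-)\cong\ck(A,U-)$, the functor $U$ preserves $\lambda$-directed colimits of $\cm'$-morphisms, and $A$ is $\lambda$-generated w.r.t. $\cm$; and they form a strong generator of $\mathsf{Alg}(T)$, being a generator because $\ca$ is one and $U$ is faithful, and jointly conservative because $\ca$ is (it is dense in $\ck$ by Remark~\ref{dense}) and $U$ is conservative. Applying Theorem~\ref{char1} to the cocomplete category $\mathsf{Alg}(T)$ equipped with the $\lambda$-convenient factorization system $(\ce',\cm')$ and this strong generator, we conclude that $\mathsf{Alg}(T)$ is $\cm'$-locally $\lambda$-generated, hence locally $\lambda$-generated.
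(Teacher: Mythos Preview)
Your overall strategy is the paper's: lift the factorization system to $\mathsf{Alg}(T)$ via $\cm'=U^{-1}(\cm)$, use Vop\v enka's principle to secure $\ce'$-cowellpoweredness, exhibit the free algebras $FA$ as a strong generator of $\lambda$-generated objects w.r.t.\ $\cm'$, and apply Theorem~\ref{char1}. There is one genuine difference in execution and one gap.

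\textbf{Difference.} You construct the factorization system $(\ce',\cm')$ directly by intersecting $\cm'$-subobjects, using only completeness, well-poweredness, and $\cm'\subseteq\Mono$. The paper instead first invokes Vop\v enka to obtain local presentability of $\mathsf{Alg}(T)$ and then cites \cite{FR}~2.2 to conclude that $(\colim F(\ce),\cm')$ is a factorization system, where $\cm'=F(\ce)^{\perp}$ by \cite{Ri}~11.1.5. Your route is more elementary and isolates the role of Vop\v enka to cowellpoweredness alone, which is a pleasant clarification; the paper's route has the advantage that identifying $\ce'=\colim F(\ce)$ makes the inclusion $\ce'\subseteq\Epi$ immediate (since $F(\ce)\subseteq\Epi$ when $(\ce,\cm)$ is proper and $U$ faithful).

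\textbf{Gap.} The assertion ``under Vop\v enka's principle, every cocomplete category with a generator is co-wellpowered'' is not a standard consequence of Vop\v enka and, as stated, is not justified. What the paper does at this point is: close the strong generator $\cg=\{FA:A\in\Gen_\lambda(\ck)\}$ under $\lambda$-small colimits to a small subcategory $\ca$ (still consisting of $\lambda$-generated objects by Remark~\ref{gen1}), argue that $\ca$ is dense in $\mathsf{Alg}(T)$, and then invoke \cite{AR}~6.14 (under Vop\v enka, cocomplete $+$ small dense subcategory $\Rightarrow$ locally presentable) together with \cite{AR}~1.58 (locally presentable $\Rightarrow$ cowellpowered). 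You should replace your one-line Vop\v enka claim by this argument; concretely, observe that every $T$-algebra is a reflexive coequalizer of free algebras and that $F$ preserves colimits, so the $FA$ are colimit-dense, which under Vop\v enka suffices for local presentability. Everything else in your proof is correct.
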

\begin{proof}
Following \cite{Ba} 3.3, $\Alg(T)$ is cocomplete. Consider the adjunction 
$F: \ck \leftrightarrows \mathsf{Alg}(T): U$ and put $\cm'=U^{-1}(\cm)$. Using \cite{B} 4.3.2, we get that $\cm'$ is closed under $\lambda$-directed colimits. Following \cite{Ri} 11.1.5, $\cm'=F(\ce)^\perp$. Since $\ce=\colim\ce_\lambda$ (see \ref{conv1}), we have 
$\colim F(\ce)=\colim(F(\ce_\lambda))$. $U$ is clearly conservative (and faithful), thus $F$ maps the dense generator $\Gen_\lambda(\ck)$ to a strong generator $\cg$ in $\mathsf{Alg}(T)$. Let $\ca$ be the closure of $\cg$ under $\lambda$-small colimits. Following \ref{gen1}, $\ca$ is dense in $\ck$. Assuming Vop\v enka's principle, $\Alg(T)$ is locally presentable (see \cite{AR} 6.14). Hence, following \cite{FR} 2.2, $(\colim F(\ce),\cm')$ is a factorization system on $\Alg(T)$. Since $\Alg(T)$ is co\-wellpowered (see \cite{AR} 1.58) and $\colim F(\ce)\subseteq\Epi$, this factorization system $(\colim F(\ce),\cm')$ is $\lambda$-convenient. We conclude the proof by the previous theorem and \ref{preservesmall}.
\end{proof}

\begin{rem}
{
\em
We do not know whether Vop\v enka's principle is really needed. This is related to the Open Problem 3 in \cite{AR}. In fact, let $\cl$ be a full reflective subcategory of a locally $\lambda$-presentable category $\ck$ closed under $\lambda$-directed colimits of monomorphisms. Then the monad $T=FG$, where $G:\cl\to\ck$ is the inclusion and $F$ its left adjoint, preserves $\lambda$-directed colimits of monomorphisms. Since $\cl\cong\Alg(T)$, \ref{monad} without Vop\v enka's principle would yield a positive solution of the Open Problem 3.
}
\end{rem}

\section{Extended Gabriel-Ulmer duality}
The Gabriel-Ulmer duality is a contravariant biequivalence between categories with $\lambda$-small limits and locally $\lambda$-presentable categories (see \cite{GU} 7.11 or \cite{AR} 1.45). We are going to to extend this duality to $\cm$-locally $\lambda$-generated categories. It will also cover the Gabriel-Ulmer duality for $\Mono$-locally $\lambda$-generated categories. In order to do so, we will introduce the notion of a nest.

\begin{defi}
{
\em
A \textit{$\lambda$-nest} is a small category $\ca$ equipped with a factorization system $(\ce_\ca,\cm_\ca)$ and having $\lambda$-small limits and multiple pullbacks of $\cm_\ca$-morphisms.
}
\end{defi}

\begin{rem}
{
\em
$\lambda$-nests $\ca$ with $(\Iso,\ca^\to)$ are precisely small categories with $\lambda$-small limits. $\lambda$-nests with $(\StrongEpi,\Mono)$ are precisely small "echt" $\lambda$-complete categories of \cite{GU}.
}
\end{rem}

\begin{exam}
{
\em
$\Gen_\lambda(\ck)^{\op}$ is a $\lambda$-nest for every $\cm$-locally $\lambda$-generated category $\ck$. This follows from \ref{gen}, \ref{quot} and the fact that $(\cm_\ca\cap\ca^\to,\ce_\ca\cap\ca^\to)$ is a factorization system on $\Gen_\lambda(\ck)^{\op}$.
}
\end{exam}

\begin{nota}
{
\em
For a $\lambda$-nest $\ca$, $\Mod_\lambda(\ca)$ denotes the category of all \textit{models}, i.e., of functors $\ca\to\Set$ preserving $\lambda$-small limits and multiple pullbacks of $\cm$-morphisms.
}
\end{nota}

\begin{lemma}\label{fact}
$\Mod_\lambda(\ca)$ is a locally presentable category equipped with a factorization system $(\ce,\cm)$.
\end{lemma}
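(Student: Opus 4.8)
The plan is to present $\Mod_\lambda(\ca)$ as a small orthogonality class in a presheaf category, and then to carve out the factorization system by orthogonality against a single small set of morphisms, in the style of the later proof of \ref{monad}.

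For local presentability, I would work inside $\Set^\ca$, which is locally presentable since $\ca$ is small. For every $\lambda$-small diagram $D\colon\cj\to\ca$ admitting a limit $L$ in $\ca$, let $c_D\colon\colim_{\cj^{\op}}\ca(D-,-)\to\ca(L,-)$ be the canonical comparison morphism in $\Set^\ca$; by the usual Yoneda argument a functor $F\colon\ca\to\Set$ preserves the limit of $D$ exactly when it is orthogonal to $c_D$. I would do the same for each multiple-pullback cone of $\cm_\ca$-morphisms (again the limit of a small — though not $\lambda$-small — diagram in $\ca$, so the comparison morphism still lives in $\Set^\ca$). Since $\ca$ is small there is, up to isomorphism, only a set of diagrams of the two kinds — for the multiple pullbacks one uses that $\cm_\ca$ is a set of arrows of $\ca$ and that a multiple pullback is indexed by a family of arrows with fixed codomain. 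Collecting the corresponding comparison morphisms into a set $\cp$ gives $\Mod_\lambda(\ca)=\Ort(\cp)$, so $\Mod_\lambda(\ca)$ is a reflective subcategory of $\Set^\ca$ and is itself locally presentable by the standard theory of small orthogonality classes (see \cite{AR} 1.35). Equivalently, $\Mod_\lambda(\ca)$ is the category of models of a limit sketch with underlying category $\ca$.

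For the factorization system, I would note that every representable $\ca(A,-)$ preserves all limits, hence lies in $\Mod_\lambda(\ca)$; so $\cs:=\{\,\ca(e,-)\colon\ca(B,-)\to\ca(A,-)\mid(e\colon A\to B)\in\ce_\ca\,\}$ is a set of morphisms of $\Mod_\lambda(\ca)$. Set $\cm:=\cs^{\perp}$ and $\ce:={}^{\perp}\cm$. Because $\Mod_\lambda(\ca)$ is locally presentable — in particular cocomplete and cowellpowered — and $\cs$ is a set, \cite{FR} 2.2 (the same tool invoked in \ref{monad}) yields that $(\ce,\cm)$ is an orthogonal factorization system on $\Mod_\lambda(\ca)$. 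One can make $\cm$ explicit by unwinding the orthogonality via Yoneda: a morphism $f\colon M\to N$ of $\Mod_\lambda(\ca)$ lies in $\cm$ iff for every $e\colon A\to B$ in $\ce_\ca$ the naturality square of $f$ at $e$ is a pullback in $\Set$. By construction $\cs\subseteq\ce$, i.e. each $\ca(e,-)$ with $e\in\ce_\ca$ is an $\ce$-morphism.

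I do not expect a real obstacle here: the proof is an assembly of standard facts — small orthogonality classes in locally presentable categories, and orthogonal factorization systems generated by a set of arrows via the small object argument. The points needing care are bookkeeping: checking that the two families of diagrams and the set $\cs$ really are sets (this is where smallness of $\ca$, and the fact that $\ce_\ca$ and $\cm_\ca$ are sets of arrows, are used), and verifying the hypotheses of \cite{FR} 2.2. The only genuine choice is that of $\cs$; it is dictated not by the present statement but by what comes next, namely that $(\ce,\cm)$ should be the factorization system under which $\ca$ is recovered (up to opposite) as $\Gen_\lambda(\Mod_\lambda(\ca))$ in the extended Gabriel--Ulmer duality, the inclusion $\cs\subseteq\ce$ being precisely the needed compatibility of the factorization systems.
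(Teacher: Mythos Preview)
Your approach matches the paper's almost exactly: both obtain local presentability by viewing $\Mod_\lambda(\ca)$ as the models of a limit sketch on $\ca$ (the paper simply cites \cite{AR}~1.51, you unfold the orthogonality-class argument behind it, citing \cite{AR}~1.35), and both construct the factorization system from a small generating set of arrows via \cite{FR}~2.2.

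There is, however, a genuine slip in your choice of $\cs$. You take $\cs=\{\ca(e,-):e\in\ce_\ca\}$, whereas the paper takes (the Yoneda images of) $\cm_\ca$, setting $\ce=\colim(\cm_\ca)$ and $\cm=\cm_\ca^{\perp}$. The point is that Yoneda embeds $\ca^{\op}$, not $\ca$, into $\Mod_\lambda(\ca)$, and on $\ca^{\op}$ the factorization system is $(\cm_\ca,\ce_\ca)$ with $\cm_\ca$ as the \emph{left} class; it is this left class whose image must generate $\ce$ in $\Mod_\lambda(\ca)$. Remark~\ref{eq} makes this explicit: $(\colim(\cm_\ca),\cm_\ca^{\perp})$ restricts on $\ca^{\op}$ to $(\cm_\ca,\ce_\ca)$. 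With your $\cs$ the restriction goes the wrong way, so the compatibility you assert at the end --- ``the inclusion $\cs\subseteq\ce$ being precisely the needed compatibility'' --- fails, and the resulting $(\ce,\cm)$ would not support Theorem~\ref{model} or the duality of Theorem~\ref{dual}. Since the bare statement of the lemma only asks for \emph{some} factorization system, your argument does prove it; but the factorization system you produce is not the one the paper needs. Replacing $\cs$ by $\{\ca(m,-):m\in\cm_\ca\}$ fixes this and then your proof coincides with the paper's.
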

\begin{proof}
$\Mod_\lambda(\ca)$ is a full subcategory of $\Set^\ca$ and the codomain restriction of the Yoneda embedding $Y:\ca^{\op}\to\Set^\ca$ is a full embedding of $\ca^{\op}$ to $\Set^\ca$. 
Following \cite{AR} 1.51, $\Mod_\lambda(\ca)$ is locally presentable. The factorization system
on $\ca^{\op}$ is $(\cm_\ca,\ce_\ca)$ and the factorization system $(\ce,\cm)$ is given, following \cite{FR} 2.2, as: $\ce=\colim(\cm_\ca)$ and $\cm=\cm_\ca^\perp$.  
\end{proof}

\begin{theo}\label{model}
$\Mod_\lambda(\ca)$ is a $\cm$-locally $\lambda$-generated category for every $\lambda$-nest $\ca$.
\end{theo}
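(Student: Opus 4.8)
The plan is to verify the hypotheses of the recognition theorem \ref{char1} for $\ck:=\Mod_\lambda(\ca)$ equipped with the factorization system $(\ce,\cm)$ produced by \ref{fact} (so $\cm=\cm_\ca^\perp$ and $\ce=\colim(\cm_\ca)$): that is, to check that $(\ce,\cm)$ is $\lambda$-convenient and to exhibit a strong generator formed by objects that are $\lambda$-generated w.r.t.\ $\cm$. For the generator I would take the representables. The codomain restriction of the Yoneda embedding $Y\colon\ca^{\op}\to\Set^\ca$ factors through $\Mod_\lambda(\ca)$, since each hom-functor $\ca(A,-)$ preserves all limits and hence in particular $\lambda$-small limits and multiple pullbacks of $\cm_\ca$-morphisms. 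Representables are dense in $\Set^\ca$, and as $\Mod_\lambda(\ca)$ is a full reflective subcategory of $\Set^\ca$ containing them, the canonical diagram of representables over an object has that object as its colimit in $\Mod_\lambda(\ca)$ as well; so $\{\ca(A,-)\mid A\in\ca\}$ is a dense, in particular strong, generator of $\Mod_\lambda(\ca)$.

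The single load-bearing ingredient is a pointwise description of $\cm$. A short Yoneda computation shows that a morphism $g\colon M\to N$ of $\Set^\ca$ lies in $\cm_\ca^\perp$ precisely when its naturality square at every $u\in\cm_\ca$ is a pullback in $\Set$ — the unique-lifting condition against $Y(u)$ collapses to this because $\Set^\ca(\ca(B,-),-)$ is the evaluation functor $\operatorname{ev}_B$. (Since $\Mod_\lambda(\ca)$ is full in $\Set^\ca$ and contains the representables, the same describes $\cm$-morphisms between models.) From this, together with the fact that filtered colimits commute with pullbacks in $\Set$, I get two things about a $\lambda$-directed diagram $(M_i)$ in $\Mod_\lambda(\ca)$ whose transition morphisms all lie in $\cm$, with pointwise colimit $M$. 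First, each colimit leg $M_i\to M$, and the comparison morphism from $M$ to any competing cocone of $\cm$-morphisms, again has all naturality squares at $\cm_\ca$-morphisms pullbacks, i.e.\ lies in $\cm$. Second, $M$ is again a model: it preserves $\lambda$-small limits automatically, and given a family $\{f_k\colon A_k\to A\}$ of $\cm_\ca$-morphisms with multiple pullback $P$, the leg squares give $M_i(A_k)\cong M_i(A)\times_{M(A)}M(A_k)$, hence $M_i(P)\cong M_i(A)\times_{M(A)}Q$ where $Q$ is the multiple pullback of $\{M(f_k)\}$ in $\Set$; passing to the colimit over $i$ and commuting the filtered colimit past the pullback yields $M(P)\cong M(A)\times_{M(A)}Q\cong Q$, which is exactly the model condition.

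Granting this, the assembly is routine. A pointwise $\lambda$-directed colimit of a $\cm$-diagram is a model, hence it is also the colimit in the reflective subcategory $\Mod_\lambda(\ca)$; so $\lambda$-directed colimits of $\cm$-morphisms in $\Mod_\lambda(\ca)$ are pointwise. By the first consequence this gives condition \ref{conv}(2); since $\Mod_\lambda(\ca)$ is locally presentable (\ref{fact}) and $\cm=\cm_\ca^\perp$ is determined by the set $\cm_\ca$, it is also $\ce$-cowellpowered, which is \ref{conv}(1); so $(\ce,\cm)$ is $\lambda$-convenient. The same pointwise computation identifies $\Mod_\lambda(\ca)(\ca(A,-),-)$ with $\operatorname{ev}_A$ and shows $\operatorname{ev}_A$ preserves $\lambda$-directed colimits of $\cm$-morphisms, so every $\ca(A,-)$ is $\lambda$-generated w.r.t.\ $\cm$. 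Theorem \ref{char1} then yields that $\Mod_\lambda(\ca)$ is $\cm$-locally $\lambda$-generated.

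I expect the main obstacle to be the second consequence in the middle paragraph. The definition of a model contains a possibly large limit condition — preservation of multiple pullbacks of $\cm_\ca$-morphisms — and such a condition is not inherited by $\lambda$-directed colimits in general; it is inherited here only because the transition morphisms are taken in $\cm$, which makes the colimit legs cartesian over $\cm_\ca$ and thereby turns the large limit into an ordinary pullback that commutes with the colimit. Getting the pointwise description of $\cm$ right (including that it is unaffected by passing between $\Set^\ca$ and $\Mod_\lambda(\ca)$), and nailing down the $\ce$-cowellpoweredness needed for \ref{conv}(1), are the remaining but more routine points.
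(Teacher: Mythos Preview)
Your pointwise identification of $\cm$ with the class of natural transformations whose naturality squares at $\cm_\ca$-morphisms are pullbacks is correct, and the base-change manoeuvre that turns the (possibly large) multiple-pullback condition into a single pullback along a colimit leg is a genuinely different and rather cleaner argument than the paper's. The paper instead writes $\Mod_\lambda(\ca)$ as an orthogonality class $\Ort(\cp)$ and, for the multiple-pushout part of $\cp$, runs a transfinite lifting argument along a smooth chain of $\cm_\ca$-morphisms; your approach avoids that ordinal bookkeeping entirely. The remaining pieces—pointwise computation of the colimit, density and $\lambda$-generatedness of the representables—are fine.

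The real gap is precisely the step you call ``more routine'': $\ce$-cowellpoweredness. The implication ``$\ck$ locally presentable and $\cm=S^\perp$ for a set $S$ $\Rightarrow$ $\ck$ is $\ce$-cowellpowered'' is false. Take $\ck=\Set$ and $S=\{\emptyset\to 1\}$: then $S^\perp=\Iso$, so $\ce=\Set^\to$, and $\Set$ is not $\Set^\to$-cowellpowered (the empty set has a proper class of quotients). Since Theorem~\ref{char1} takes $\lambda$-convenience as a hypothesis—and its proof genuinely uses $\ce$-cowellpoweredness to keep the closure of the generator under $\ce$-quotients small—you cannot invoke it without an independent argument here.

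The paper fills this gap in the opposite order: it first shows directly that every object is a $\lambda$-directed colimit of representables along $\cm$-morphisms (using that $\ca^{\op}$, being a nest, is closed in $\Mod_\lambda(\ca)$ under $\lambda$-small colimits, retracts, and $\ce$-quotients—the last because $\ca$ has multiple pullbacks of $\cm_\ca$-morphisms), and only then deduces cowellpoweredness by observing that any $e\colon K\to L$ in $\ce$ is a $\lambda$-directed colimit in the arrow category of $\ce$-morphisms between representables, of which $K$ admits only a set. You can splice this into your argument, but note that doing so essentially reproduces the content of \ref{char1} rather than invoking it as a black box.
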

\begin{proof}
I. At first, we will show that $\ck=\Mod_\lambda(\ca)$ is closed in $\cl=\Set^\ca$ under $\lambda$-directed colimits of $\cm$-morphisms. Following \cite{AR} 1.33(8), $\ck=\Ort(\cp)$ where $\cp$ is defined as in the proof of \ref{char}. This means that $\cp$ consists of reflections of $\cl$-objects in $\ck$ which are either $\lambda$-small colimits of diagrams in $\ca^{\op}$ or codomains of multiple pushouts of $\cm_\ca$-morphisms in $\ca^{\op}$. 

Let $k_i:K_i\to K$ be a colimit in $\cl$ of a $\lambda$-directed diagram of $\cm$-morphisms in $\ck$. Let $r:\colim A_j\to A$ be a reflection of a $\lambda$-small colimit in $\cl$ of a diagram in $\ca^{\op}$; its reflection in $\ck$ lies in $\ca^{\op}$. It is easy to see that objects orthogonal to $r$ are closed under $\lambda$-directed colimits; in fact, these objects correspond to functors $\ca\to\Set$ preserving $\lambda$-small limits. 

Let $r:P\to A$ be a reflection of a multiple pushout of $\cm_\ca$-morphisms. Alternatively, $P$ can be seen as a colimit of a well ordered smooth chain   
$$
P_0 \xrightarrow{\ p_{01}\ } P_1 \xrightarrow{\ p_{12}\ } P_2 \xrightarrow{\ p_{23}\ }\dots
$$
of $\cm_\ca$-morphisms. Let $f:P\to K$. There exists $i_0$ and $g_0:P_0\to K_{i_0}$ such that $k_{i_0}g_0=fp_0$. There exists $i_1>i_0$ and $g'_1:P_1\to K_{i_1}$ such that $k_{i_0i_1}g_0=g'_1p_{01}$. Since $k_{i_0i_1}\in\cm_\ca^\perp$ and $p_{01}\in\cm_\ca$, there is $g_1:P_1\to K_{i_0}$ such that $g_1p_{01}=g_0$ and $k_{i_0i_1}g_1=g'_1$. Continuing this procedure by taking colimits in limit steps, we get a cocone $g_j:P_j\to K_{i_0}$ inducing $g:P\to K_{i_0}$ such that $k_{i_0}g=f$. There is $h:A\to K_{i_0}$ with $hr=g$. Hence $k_{i_0}hr=f$. The uniqueness of this extension follows from $A$ being in $\ca^{\op}$. 

II. From I., it follows that every object of $\ca^{\op}$ is $\lambda$-generated w.r.t. $\cm$ in $\ck$. Following \ref{fact} and \ref{conv1}, $\cm$ satisfies \ref{conv}(2). In the same way as in \ref{quot}, we show that every $\ce$-quotient of an $\ca^{\op}$-object is $\lambda$-generated w.r.t. $\cm$. Since every $\lambda$-generated object w.r.t. $\cm$ is a retract of an $\ca^{\op}$-object (cf. \ref{gen}) and $\ca^{\op}$ is closed under retracts, $\ca^{\op}=\Gen_\lambda(\ck)$. Using $(\ce,\cm)$ factorizations, we show that every object in $\ck$ is a $\lambda$-directed colimit of $\ca^{\op}$-objects.

It remains to show that $\ck$ is $\ce$-cowellpowered. Let $e:K\to L$ be an $\ce$-quotient of $K$. Then $e$ is a $\lambda$-directed colimit of $\ce$-morphisms $e_i:K_i\to L_i$ where $K_i$ and $L_i$ are in $\ca^{\op}$. Since there is only set of expressions of $K$ as a $\lambda$-directed colimit of $\ca^{\op}$-objects, there is only a set of $\ce$-quotients of $K$. 
\end{proof}

\begin{rem}\label{eq}
{
\em
We have 
$$
\ca\simeq (\Gen_\lambda(\Mod_\lambda(\ca)))^{\op}
$$ 
for every $\lambda$-nest $\ca$, and
$$
\ck\simeq\Mod_\lambda(\Gen_\lambda(\ck)^{\op})
$$
for every $\cm$-locally $\lambda$-generated category $\ck$.

Note that these equivalences also include the corresponding factorization systems. In the first case, $(\colim(\cm_\ca),\cm_\ca^\perp)$ restricts to $(\cm_\ca,\ce_\ca)$ (see \ref{fact}) and, in the second case, it follows from \ref{conv1}.
}
\end{rem}

\begin{rem}\label{ind}
{
\em
For a $\lambda$-nest $\ca$, $\Mod_\lambda(\ca)$ is a full subcategory of the free completion $\Ind_\lambda(\ca)$ of $\ca$ under $\lambda$-directed colimits. Indeed, the latter category consists of functors $\ca^{\op}\to\Set$ preserving $\lambda$-small limits.
}
\end{rem}

\begin{defi}
{
\em
A \textit{morphism of locally $\lambda$-generated categories} $R: \ck \to \cl$ is a right adjoint preserving $\cm$-morphisms and $\lambda$-directed colimits of them. 
}
\end{defi}

\begin{rem}\label{adjoint} 
{
\em
$R$ preserves $\cm$ if and only if its left adjoint $L:\cl\to\ck$ preserves $\ce$ (see \cite{Ri} 11.1.5).
}
\end{rem}

\begin{lemma}\label{preservesmall}
Let $R: \ck \to \cl$ be a morphism of locally $\lambda$-generated categories. Then its left adjoint $L$ preserves $\lambda$-generated objects w.r.t. $\cm$.
\end{lemma}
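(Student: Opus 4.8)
The plan is a direct transport along the adjunction $L\dashv R$. Abusing notation, write $(\ce,\cm)$ for the factorization systems on both $\ck$ and $\cl$; the hypothesis on $R$ then says that $R$ sends $\cm$-morphisms of $\ck$ to $\cm$-morphisms of $\cl$, and that $R$ carries the colimit cocone of a $\lambda$-directed diagram of $\cm$-morphisms to a colimit cocone. Fix an object $B\in\cl$ that is $\lambda$-generated w.r.t. $\cm$; the goal is to show that $\ck(LB,-)\colon\ck\to\Set$ preserves $\lambda$-directed colimits of $\cm$-morphisms, i.e. that $LB$ is $\lambda$-generated w.r.t. $\cm$ in $\ck$.

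Concretely, I would start from a $\lambda$-directed diagram $(m_{ij}\colon K_i\to K_j)_{i\le j}$ of $\cm$-morphisms in $\ck$ (which has a colimit, $\ck$ being cocomplete) with colimit cocone $(m_i\colon K_i\to K)$. First step: since $R$ preserves $\cm$-morphisms, $(Rm_{ij})$ is again a $\lambda$-directed diagram of $\cm$-morphisms in $\cl$, and since $R$ preserves $\lambda$-directed colimits of $\cm$-morphisms, $(Rm_i\colon RK_i\to RK)$ is a colimit cocone for it. Second step: because $B$ is $\lambda$-generated w.r.t. $\cm$ in $\cl$, the functor $\cl(B,-)$ preserves this colimit, so the canonical comparison map $\colim_i\cl(B,RK_i)\to\cl(B,RK)$ is a bijection. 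Third step: apply the natural bijection of the adjunction $L\dashv R$ on both ends, $\ck(LB,K_i)\cong\cl(B,RK_i)$ and $\ck(LB,K)\cong\cl(B,RK)$, to conclude that the canonical comparison map $\colim_i\ck(LB,K_i)\to\ck(LB,K)$ is a bijection. Hence $\ck(LB,-)$ preserves the colimit, as required.

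I do not anticipate a genuine obstacle: the argument is essentially bookkeeping. The only two points that need care are (i) reading the hypothesis ``$R$ preserves $\lambda$-directed colimits of $\cm$-morphisms'' correctly --- it says precisely that $R$ sends such colimit cocones to colimit cocones, which, combined with $R$ preserving $\cm$-morphisms, is exactly what identifies $(Rm_i)$ as a $\lambda$-directed colimit of $\cm$-morphisms in $\cl$ to which the defining property of $B$ applies; and (ii) checking that the composite of the three isomorphisms above is the canonical comparison morphism for $\ck(LB,-)$ on the original colimit, which is automatic from naturality of the adjunction bijection in both variables. It is worth noting that the reformulation of Remark~\ref{adjoint} (that $L$ preserves $\ce$) plays no role here; only $R$ preserving $\cm$-morphisms together with the colimit-preservation half of the hypothesis is used.
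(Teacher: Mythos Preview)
Your proposal is correct and is essentially the same argument as the paper's own proof: transport along the adjunction $L\dashv R$, use that $R$ preserves $\cm$-morphisms and $\lambda$-directed colimits of them, then invoke the $\lambda$-generatedness of $B$ in $\cl$. The paper presents this as a four-line chain of isomorphisms, whereas you spell out the same steps with extra care about points (i) and (ii); there is no genuine difference in method.
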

\begin{proof}
Let $A$ be $\lambda$-generated w.r.t. $\cm$ in $\cl$. Consider a directed colimit $\colim K_i$ of $\cm$-morphisms in $\ck$. Then

\begin{align*}
  \ck(LA,\colim K_i) &\cong \cl(A,R\colim K_i) \\
    &\cong \cl(A, \colim RK_i) \\
  &\cong \colim \cl(A,K_i)  \\
   &\cong \colim \ck(LA, K_i). 
\end{align*}

\end{proof}

\begin{defi}
{
\em
A \textit{morphism of $\lambda$-nests} $F: \ca \to \cb$ is a functor preserving $\lambda$-small limits, $\cm$-morphisms and multiple pullbacks of them.
}
\end{defi}

\begin{nota}
{
\em
Let $\LG_\lambda$ be the $2$-category of locally $\lambda$-generated categories and $\N_\lambda$ be the $2$-category of $\lambda$-nests, in the both cases $2$-cells are natural transformations.
}
\end{nota}

\begin{con}[The functor $\Gen_\lambda$]
{
\em
Given a locally $\lambda$-generated category $\ck$, we have defined $\Gen_\lambda(\ck)^{\op}$ to be the opposite category of its $\lambda$-generated objects. It is easy to see that this construction is (contravariantly) functorial. Indeed, given a morphism of locally $\lambda$-generated categories $R: \ck \to \cl$, its left adjoint $L$ restricts to $\lambda$-generated objects \[ L: \Gen_\lambda(\cl) \to \Gen_\lambda(\ck)\] by \ref{preservesmall}, and passing to the opposite category, is a morphism of $\lambda$-nests because of \ref{quot} and \ref{gen}.
}
\end{con}

\begin{con}[The functor $\Mod_\lambda$]\label{models}
{
\em
Given a $\lambda$-nest $\ca$, we have seen that the category $\Mod_\lambda(\ca)$ is locally $\lambda$-generated. We will extend this construction to a (contravariant) functor. Given a morphism of $\lambda$-nests $F: \ca \to \cb$, the functor $\Mod_\lambda(F)$ sends $H$ from $\Mod_\lambda(\cb)$ to $HF$. This functor is the domain restriction of the functor $\Ind_\lambda(\cb)\to\Ind_\lambda(\ca)$ given, again, by precompositions with $F$. The latter functor has the left adjoint $\Ind_\lambda(F):\Ind_\lambda(\ca)\to\Ind_\lambda(\cb)$. The domain restriction $L(F)$ of this left adjoint is a left adjoint to $\Mod_\lambda(F)$. The functor $L(F)$ preserves $\ce$-morphisms because
$$
L(F)(\ce_{\Mod_\lambda(\ca)})=L(F)(\colim\cm_\ca)\subseteq\colim(F(\cm_\ca))\subseteq
\ce_{\Mod\lambda}(\cb).
$$
Thus $\Mod_\lambda(F)$ preserves $\cm$-morphisms and, therefore, it is a morphism of $\lambda$-generated categories.
}
\end{con}

\begin{theo}\label{dual}
\[ \Gen_\lambda:\LG _\lambda \leftrightarrows \N^{\op}_\lambda : \Mod_\lambda \] is a dual biequivalence between locally $\lambda$-generated categories and $\lambda$-nests.
\end{theo}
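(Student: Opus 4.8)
The plan is to assemble the biequivalence from the building blocks already in place, checking the four standard ingredients: the two functors are well-defined (done in the two preceding Constructions), the unit and counit are equivalences, and the hom-2-functors induced on 2-cells are isomorphisms.

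First I would recall the object-level equivalences from Remark \ref{eq}: for every $\lambda$-nest $\ca$ we have $\ca\simeq(\Gen_\lambda(\Mod_\lambda(\ca)))^{\op}$, and for every locally $\lambda$-generated category $\ck$ we have $\ck\simeq\Mod_\lambda(\Gen_\lambda(\ck)^{\op})$, both compatible with the factorization systems. So at the level of objects the two composites $\Mod_\lambda\circ\Gen_\lambda$ and $\Gen_\lambda\circ\Mod_\lambda$ are equivalent to the respective identities. The remaining work is to promote these pointwise equivalences to pseudonatural equivalences of 2-functors and to check full faithfulness on hom-categories.

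Next I would verify that $\Gen_\lambda$ is (locally) fully faithful on 1-cells and 2-cells. A morphism $R:\ck\to\cl$ of locally $\lambda$-generated categories is determined by its left adjoint $L$, which by Lemma \ref{preservesmall} restricts to $L:\Gen_\lambda(\cl)\to\Gen_\lambda(\ck)$; conversely, by the density of $\Gen_\lambda(\ck)^{\op}$ (Remark \ref{dense}) together with $\ck\simeq\Mod_\lambda(\Gen_\lambda(\ck)^{\op})$ and Construction \ref{models}, every morphism of $\lambda$-nests $\Gen_\lambda(\cl)\to\Gen_\lambda(\ck)$ extends uniquely (up to iso) along the free-cocompletion-style adjunction to a morphism $\Mod_\lambda$ of locally $\lambda$-generated categories, and this extension is inverse to restriction. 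This is the analogue of the classical statement that a $\lambda$-continuous functor between categories with $\lambda$-small limits corresponds bijectively to a cocontinuous functor between their Ind-completions; here one must additionally track that "preserves $\cm$" on the $\Mod$ side corresponds to "preserves $\cm_\ca$ and its multiple pullbacks" on the nest side, which is exactly Remark \ref{adjoint} combined with the identities $\ce=\colim(\cm_\ca)$, $\cm=\cm_\ca^\perp$ from Lemma \ref{fact}. For 2-cells: a natural transformation $R\Rightarrow R'$ corresponds to one $L'\Rightarrow L$ (mate correspondence), which restricts to the subcategories of $\lambda$-generated objects and, conversely, extends uniquely by density; so $\Gen_\lambda$ is a biequivalence onto its image, and by the object-level surjectivity (every $\lambda$-nest arises as some $\Gen_\lambda(\ck)^{\op}$, namely $\ck=\Mod_\lambda(\ca)$) it is a biequivalence, with pseudo-inverse $\Mod_\lambda$.

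The main obstacle I expect is the bookkeeping around the adjunction $\Ind_\lambda(F)\dashv(-)\circ F$ and its restriction to models: one has to check that $L(F)$ really lands in $\Mod_\lambda(\cb)$ and not merely in $\Ind_\lambda(\cb)$, that the restricted adjunction is again an adjunction, and that every morphism of locally $\lambda$-generated categories is, up to coherent isomorphism, of the form $\Mod_\lambda(F)$ — i.e. that the "restrict then re-extend" round trip is naturally isomorphic to the identity. The cleanest route is to transport everything through the equivalences of Remark \ref{eq} so that a morphism $R:\ck\to\cl$ becomes a morphism $\Mod_\lambda(\ca')\to\Mod_\lambda(\ca)$ (with $\ca=\Gen_\lambda(\ck)^{\op}$, $\ca'=\Gen_\lambda(\cl)^{\op}$), whose left adjoint $L$ preserves $\lambda$-generated objects and $\ce$-morphisms; restricting $L$ to $\ca'\to\ca$ and using that this restriction preserves $\lambda$-small limits (since $L$, being a left adjoint whose right adjoint preserves $\lambda$-directed colimits of $\cm$-morphisms, preserves the relevant limits on generators via Remark \ref{dense}), one recovers a morphism of $\lambda$-nests $F$ with $\Mod_\lambda(F)\cong R$. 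Uniqueness of $F$ up to iso follows from density. Once this correspondence and its 2-cell version are in hand, the triangle identities for the biadjunction are automatic from the object-level equivalences, and the theorem follows.
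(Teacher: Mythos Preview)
Your proposal is correct and follows essentially the same approach as the paper: the paper's proof is the single line ``It follows from \ref{eq} and \ref{models}'', and what you have written is precisely an unpacking of that sentence---using the object-level equivalences of \ref{eq} together with the two Constructions to obtain essential surjectivity, and then verifying the hom-category equivalence via the restrict/extend correspondence for left adjoints. One minor correction: when you argue that the restricted $L$ gives a morphism of $\lambda$-nests, the reason it preserves $\lambda$-small limits (i.e.\ $\lambda$-small colimits before passing to opposites) is simply that $L$ is a left adjoint and $\Gen_\lambda$ is closed under $\lambda$-small colimits by \ref{gen}, not anything coming from \ref{dense}.
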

\begin{proof}
It follows from \ref{eq} and \ref{models}.
\end{proof}

\begin{rem}\label{duality}
{
\em
(1) Our duality restricts to the standard Gabriel-Ulmer duality between locally $\lambda$-presentable categories and small categories with $\lambda$-small limits (for $\cm=\Iso$) and, a little bit forgoten, Gabriel-Ulmer duality for locally $\lambda$-generated categories (see \cite{GU} 9.8). In the second case $\cm=\Mono$ and the dual is formed by small categories with $\lambda$-small limits and pullbacks of strong monomorphisms.

On the other hand, our extended Gabriel-Ulmer duality is a restriction of the standard one (see \ref{models}).

(2) Like the Gabriel-Ulmer duality (see \cite{MP}), our duality is given by the category $\Set$ being both a large $\lambda$-nest and a locally $\lambda$-generated category. Clearly, models of a $\lambda$-nest $\ca$ are $\lambda$-nest morphisms $\ca\to\Set$. Conversely, a morphism $U:\ck\to\Set$ of locally $\lambda$-generated categories is uniquely determined by its left adjoint $F:\Set\to\ck$, these restrictions uniquely corespond to objects in $\Gen_\lambda(\ck)$. 

(3) \cite{CV} generalized the Gabriel-Ulmer duality to certain limit doctrines. This is based on the commutation of certain limits and colimits in $\Set$. But our duality (even that for $\Mono$-locally $\lambda$-generated categories) does not fall under this scope.
}
\end{rem}  

\section{Enriched locally generated categories}
In what follows, $\cv$ will be a complete and cocomplete symmetric monoidal closed category.
We will work with $\cv$-categories and under a $\lambda$-directed colimit we will mean a conical $\lambda$-directed one. $\cv$-factorization systems were introduced in \cite{D} and their theory was later developed by Lucyshyn-Wright in \cite{LW}. We refer to \cite{LW} for the main definitions and notations.

\begin{defi}
{
\em 
Let $\ck$ be a $\cv$-category with a $\cv$-factorization system $(\ce,\cm)$ and $\lambda$ a regular cardinal. We say that an object $A$ is \textit{$\lambda$-generated w.r.t. $\cm$} if its hom-functor $\ck(A,-):\ck\to\cv$ preserves $\lambda$-directed colimits of $\cm$-morphisms.
}
\end{defi}

\begin{rem}\label{V-fact}
{
\em
In a tensored $\cv$-category $\ck$, $\cv$-factorization systems are precisely factorization systems
$(\ce,\cm)$ such that $\ce$ is closed under tensors (see \cite{LW} 5.7).
}
\end{rem}

\begin{defi}\label{V-conv}
{
\em
A $\cv$-factorization system $(\ce,\cm)$ in a tensored $\cv$-category $\ck$ is called \textit{$\lambda$-convenient} if it is $\lambda$-convenient as an (ordinary) factorization system.
}
\end{defi}

\begin{defi}\label{V-locgen}
{
\em
Let $\ck$ be a cocomplete $\cv$-category with a $\lambda$-convenient $\cv$-fac\-to\-ri\-za\-tion system $(\ce,\cm)$ where $\lambda$ a is regular cardinal. We say that $\ck$ is \textit{$\cm$-locally $\lambda$-generated} if it has a set $\ca$ of $\lambda$-generated objects w.r.t. $\cm$ such that every object is a $\lambda$-directed colimit of objects from $\ca$ and morphisms from $\cm$. 

$\ck$ is called \textit {$\cm$-locally generated} if it is $\cm$-locally $\lambda$-generated for some regular cardinal $\lambda$.
}
\end{defi}

\begin{exam}
{
\em
For a cocomplete $\cv$-category $\ck$, $(\Iso,\ck^\to)$ is a convenient $\cv$-fac\-to\-ri\-za\-tion system. $\ck^\to$-locally $\lambda$-generated categories are locally presentable $\cv$-categories in the sense of \cite{K1}.
}
\end{exam}

\begin{rem}\label{V-dense}
{
\em
In \ref{V-locgen}, $\ca$ is dense in $\ck$, i.e., the canonical $\cv$-functor $E:\ck\to\cv^{\ca^{\op}}$ is fully faithful. In fact, $E$ preserves $\lambda$-directed colimits of $\cm$-morphisms and, then, the result follows from \cite{K}, 5.19.
}
\end{rem}

\begin{rem}\label{V-arrow}
{
\em
For a $\cv$-category $\ck$, the arrow category $\ck^\to$ is a $\cv$-category with $\ck^(f,g)$ defined by the following pullback in $\cv$
$$
		\xymatrix@=3pc{
			\ck^\to(f,g) \ar [r]^{}\ar[d]_{} & \ck(A,C) \ar[d]^{\ck(A,g) }\\
			 \ck(B,D)  \ar [r]_{\ck(f,D)}& \ck(A,D)
		}
		$$ 
where $f:A\to B$ and $g:C\to D$.	
}
\end{rem}
	
\begin{propo}\label{V-arrow1}
Assume that pullbacks commute with $\lambda$-directed colimits in $\cv$. Then, for a $\cm$-locally $\lambda$-generated $\cv$-category $\ck$, the $\cv$-category $\ck^\to$ is $\cm_\to$-locally $\lambda$-generated.
\end{propo}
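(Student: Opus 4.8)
The plan is to transfer the $\cm$-locally $\lambda$-generated structure from $\ck$ to $\ck^\to$ along the arrow-category construction, using the pullback formula of \ref{V-arrow} together with the hypothesis that pullbacks commute with $\lambda$-directed colimits in $\cv$. First I would fix the data. On $\ck^\to$, take the factorization system $(\ce_\to,\cm_\to)$ obtained levelwise: a morphism $(a,b):f\to g$ in $\ck^\to$ (a commutative square) lies in $\ce_\to$ if both $a$ and $b$ are in $\ce$, and in $\cm_\to$ if both $a$ and $b$ are in $\cm$. This is a $\cv$-factorization system because $\ce_\to$ is closed under tensors (tensors in $\ck^\to$ are computed levelwise and $\ce$ is closed under tensors in $\ck$), and it is $\lambda$-convenient because $\lambda$-directed colimits in $\ck^\to$ are computed levelwise and $\ce$-cowellpoweredness plus closure of $\cm$ under $\lambda$-directed colimits both pass to the product-like structure of $\ck^\to$. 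As the dense generator I would propose $\ca_\to$, the set of morphisms of the form $\id_A$ for $A\in\ca$ together with morphisms $A\xrightarrow{e}B$ with $e\in\ce_\lambda$ (equivalently, all $f\colon A\to B$ in $\ck_\lambda^\to$); more precisely, one checks it suffices to take the (essentially small) set of all morphisms in $\ck^\to$ whose domain and codomain lie in $\Gen_\lambda(\ck)$.

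Next I would verify that every object of $\ca_\to$ is $\lambda$-generated w.r.t. $\cm_\to$ in $\ck^\to$. Here is where \ref{V-arrow} and the commutation hypothesis do the work: for $f\colon A\to B$ in $\ck^\to$ with $A,B\in\Gen_\lambda(\ck)$ and a $\lambda$-directed colimit $g_i\colon f_i\to g$ of $\cm_\to$-morphisms (so levelwise $\lambda$-directed colimits of $\cm$-morphisms), we have
\[
\ck^\to(f,\colim_i g_i)\;\cong\;\ck(B,\colim_i D_i)\times_{\ck(A,\colim_i D_i)}\ck(A,\colim_i C_i),
\]
and since $A,B$ are $\lambda$-generated w.r.t. $\cm$ in $\ck$ the three hom-objects on the right are colimits $\colim_i\ck(B,D_i)$, $\colim_i\ck(A,D_i)$, $\colim_i\ck(A,C_i)$; because pullbacks commute with $\lambda$-directed colimits in $\cv$, the pullback of these colimits is the colimit of the pullbacks, i.e. $\colim_i\ck^\to(f_i,g)$, as desired.

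Then I would establish that every object of $\ck^\to$ is a $\lambda$-directed colimit of $\ca_\to$-objects along $\cm_\to$-morphisms. Given $f\colon A\to B$, write $A=\colim_i A_i$ and $B=\colim_j B_j$ as $\lambda$-directed colimits of $\Gen_\lambda(\ck)$-objects along $\cm$-morphisms (using that $\ck$ is $\cm$-locally $\lambda$-generated); then pass to the $\lambda$-directed poset of pairs $(i,j)$ with a compatibility condition, and for each such pair factor the composite $A_i\to A\xrightarrow{f} B$ through $B_j$ appropriately using density of $\Gen_\lambda(\ck)$ and the interpolation-type arguments in the proof of \ref{char1}; taking $(\ce,\cm)$-factorizations where needed, one exhibits $f$ as a $\lambda$-directed colimit of squares with corners in $\Gen_\lambda(\ck)$, with connecting squares in $\cm_\to$. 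Finally, by \ref{char1} it is enough to know that $\ck^\to$ has a strong generator of $\lambda$-generated objects w.r.t. $\cm_\to$, which follows once $\ca_\to$ is dense (by \ref{V-dense}, or directly). I expect the main obstacle to be the bookkeeping in this last density/colimit-presentation step — reconciling the two separate filtered presentations of $A$ and $B$ into a single $\lambda$-directed diagram of commutative squares whose transition maps are genuinely in $\cm_\to$, rather than merely levelwise monos in some weaker sense — whereas the verification that $\ca_\to$-objects are $\lambda$-generated is essentially forced by the pullback formula and the commutation hypothesis.
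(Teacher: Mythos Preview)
Your proposal is correct and matches the paper's approach: pointwise $(\ce_\to,\cm_\to)$ on $\ck^\to$, then the pullback description of $\ck^\to(f,-)$ from \ref{V-arrow} together with the commutation hypothesis to show that arrows between $\lambda$-generated objects are $\lambda$-generated w.r.t.\ $\cm_\to$, and finally the colimit presentation of an arbitrary arrow. Two minor points: the conclusion of your displayed computation should read $\colim_i\ck^\to(f,g_i)$ rather than $\colim_i\ck^\to(f_i,g)$; and the paper dispatches the last step---which you flag as the main obstacle---with a single ``clearly'', so your closing appeal to \ref{char1} via density is unnecessary (and slightly circular, since \ref{V-dense} presupposes the local generation you are trying to establish).
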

\begin{proof}
Following \cite{Ri} 13.1, $\ck^\to$ is tensored and cotensored. Since $\ck^\to$ has limits and colimits (calculated pointwise), it is complete and cocomplete (see \cite{B} 6.6.16). The factorization system $(\ce_\to,\cm_\to)$ on $\ck^\to$ is defined pointwise from that on $\ck$ and is clearly $\lambda$-convenient. We will show that any morphism $f:A\to B$ with $A$ and $B$ $\lambda$-generated w.r.t. $\cm$ is $\lambda$-generated w.r.t. $\cm_\to$.

Consider a $\lambda$-directed colimit $g_i\to g$ on $\cm_\to$-morphisms in $\ck^\to$. We have pullbacks
$$
		\xymatrix@=3pc{
			\ck^\to(f,g_{g_i}) \ar [r]^{}\ar[d]_{} & \ck(A,C_i) \ar[d]^{\ck(A,g_i) }\\
			 \ck(B,D_i)  \ar [r]_{\ck(f,D_i)}& \ck(A,D_i)
		}
		$$ 
Since pullbacks commute with $\lambda$-directed colimits in $\cv$, 
$$
\ck^\to(f,g)\cong \colim\ck^\to(f,g_i).
$$		
Clearly, every $h$ in $\ck^\to$ is a $\lambda$-directed colimit of $\lambda$-generated objcts w.r.t. $\cm_\to$ and $\cm_\to$-morphisms.
\end{proof}

\begin{lemma}\label{V-gen}
Assume that $\lambda$-small (conical) limits commute in $\cv$ with $\lambda$-directed colimits.
Then, in an $\cm$-locally $\lambda$-generated $\cv$-category, $\lambda$-generated objects w.r.t. $\cm$ are closed under $\lambda$-small (conical) colimits.
\end{lemma}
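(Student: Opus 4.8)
The plan is to run the enriched analogue of the computation in \ref{gen}, with $\cv$ in place of $\Set$: there, the key input was that $\lambda$-small limits commute with $\lambda$-directed colimits in $\Set$, and here it is replaced by the standing hypothesis on $\cv$. So let $A_j$, $j\in\cj$, be a $\lambda$-small diagram of $\lambda$-generated objects w.r.t. $\cm$, let $A=\colim_j A_j$ be its conical colimit (it exists since $\ck$ is cocomplete), and let $m_i\colon K_i\to K$, $i\in I$, be a $\lambda$-directed, hence conical, colimit of $\cm$-morphisms in $\ck$. I want to show that the canonical comparison map $\colim_i\ck(A,K_i)\to\ck(A,\colim_i K_i)$ is an isomorphism in $\cv$, i.e. that $A$ is $\lambda$-generated w.r.t. $\cm$.

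First I would use the universal property of conical colimits: for every object $X$ of $\ck$ there is a $\cv$-natural isomorphism $\ck(A,X)\cong\lim_j\ck(A_j,X)$, the right-hand side being a $\lambda$-small conical limit in $\cv$. Applying this at $X=\colim_i K_i$, then using that each $A_j$ is $\lambda$-generated w.r.t. $\cm$, then invoking the hypothesis that $\lambda$-small conical limits commute in $\cv$ with $\lambda$-directed colimits, and finally applying the conical-colimit formula again at $X=K_i$, one gets
\begin{align*}
 \ck(A,\colim_i K_i)&\cong\lim_j\ck(A_j,\colim_i K_i)\\
  &\cong\lim_j\colim_i\ck(A_j,K_i)\\
  &\cong\colim_i\lim_j\ck(A_j,K_i)\\
  &\cong\colim_i\ck(A,K_i).
\end{align*}
Here the double-indexed diagram $(j,i)\mapsto\ck(A_j,K_i)$ has $j$ ranging over the $\lambda$-small category $\cj$ and $i$ over the $\lambda$-directed set $I$, so the third isomorphism is exactly an instance of the hypothesis on $\cv$.

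The only thing that really needs checking --- and the same point is left implicit in \ref{gen} --- is that the composite of these four isomorphisms is the canonical comparison morphism, so that one genuinely concludes preservation of the colimit rather than just an abstract isomorphism; this is routine naturality bookkeeping, tracking the colimit cocones and limit cones through each step. I do not expect a real obstacle: all the diagram shapes line up ($\cj$ is $\lambda$-small, $I$ is $\lambda$-directed, all the limits in play are conical), so the substantive input beyond the unenriched argument is precisely the commutation hypothesis on $\cv$, which is why such a hypothesis has to be imposed in the statement. As noted in \ref{gen1}, the assumption that $\ck$ is $\cm$-locally $\lambda$-generated is used here only to ensure that the conical colimit $A$ lies in $\ck$, and cocompleteness alone would suffice.
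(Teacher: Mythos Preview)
Your argument is correct and is exactly the approach the paper takes: its proof simply says ``the proof for $\lambda$-small conical colimits is the same as that of \ref{gen}'', i.e.\ the four-line computation you wrote out with $\cv$ in place of $\Set$. The paper adds one clause you omitted, namely that the weighted (non-conical) case is analogous (citing \cite{BQR}~3.2); there the conical formula $\ck(\colim_j A_j,X)\cong\lim_j\ck(A_j,X)$ is replaced by the weighted one $\ck(W\ast D,X)\cong[\cj,\cv](W,\ck(D{-},X))$, and the same interchange goes through.
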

\begin{proof}
The proof for $\lambda$-small conical colimits is the same as that of \ref{gen} and the proof for $\lambda$-small weighted limits is analogous (cf. \cite{BQR} 3.2).    
\end{proof} 

\begin{rem}\label{V-gen1}
{
\em
For the first claim, we do not need to assume that $\ck$ is $\cm$-locally $\lambda$-generated.
}
\end{rem} 

\begin{lemma}\label{V-quot}
Assume that finite conical limits commute with $\lambda$-directed colimits in $\cv$. Then, in an $\cm$-locally $\lambda$-generated $\cv$-category, $\lambda$-generated objects w.r.t. $\cm$ are closed under $\ce$-quotients.
\end{lemma}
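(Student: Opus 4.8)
The plan is to transcribe the proof of Lemma~\ref{quot} into the enriched setting, replacing the pointwise diagonalization by the pullback square that encodes $\cv$-orthogonality. So I would take an $\ce$-morphism $e\colon A\to B$ with $A$ a $\lambda$-generated object w.r.t.\ $\cm$, and let $m_i\colon K_i\to K$, $i\in I$, be a $\lambda$-directed colimit of $\cm$-morphisms; by $\lambda$-convenience (\ref{conv}) each colimit leg $m_i$ again lies in $\cm$. Since $(\ce,\cm)$ is a $\cv$-factorization system, $e$ is $\cv$-orthogonal to $m_i$ (see \cite{LW}), which says precisely that the square
\[
\xymatrix@=2pc{
\ck(B,K_i)\ar[r]\ar[d] & \ck(A,K_i)\ar[d]\\
\ck(B,K)\ar[r] & \ck(A,K)
}
\]
is a pullback in $\cv$, naturally in $i\in I$.

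Next I would pass to the $\lambda$-directed colimit over $I$. This is where the hypothesis that finite conical limits commute with $\lambda$-directed colimits in $\cv$ is used: the objects $\ck(B,K)$, $\ck(A,K)$ and the map $\ck(e,K)$ between them are constant in $i$, and $I$ is connected, so the colimit of the displayed cospans is $\ck(B,K)\to\ck(A,K)\leftarrow\colim_i\ck(A,K_i)$, and the colimit of the pullback squares is still a pullback square
\[
\xymatrix@=2pc{
\colim_i\ck(B,K_i)\ar[r]\ar[d] & \colim_i\ck(A,K_i)\ar[d]\\
\ck(B,K)\ar[r] & \ck(A,K)
}
\]
Because $A$ is $\lambda$-generated w.r.t.\ $\cm$, the canonical map $\colim_i\ck(A,K_i)\to\ck(A,K)$ is an isomorphism, so the pullback collapses and yields $\colim_i\ck(B,K_i)\cong\ck(B,K)$.

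Finally, one checks that the isomorphism just obtained is the canonical comparison morphism $\colim_i\ck(B,K_i)\to\ck(B,\colim_i K_i)$, which is immediate from the naturality of the maps involved together with the universal property of the pullback; hence $\ck(B,-)\colon\ck\to\cv$ preserves $\lambda$-directed colimits of $\cm$-morphisms, i.e.\ $B$ is $\lambda$-generated w.r.t.\ $\cm$, as desired. The step I expect to require the most care — and the only place where an argument could slip — is the bookkeeping that the $\cv$-orthogonality square is genuinely natural in $i$ and that its colimit is computed by the comparison maps; everything else is a direct import of the $\Set$-level argument plus the commutation hypothesis on $\cv$. (Note that, just as for \ref{quot}, the argument does not actually use that $\ck$ is $\cm$-locally $\lambda$-generated, only that $(\ce,\cm)$ is a $\lambda$-convenient $\cv$-factorization system.)
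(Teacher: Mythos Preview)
Your argument is correct, and it is genuinely different from the paper's. You work directly in $\cv$: the $\cv$-orthogonality of $e$ against each $m_i$ gives a pullback square of hom-objects, the hypothesis on $\cv$ lets you pass to the $\lambda$-directed colimit of those squares, and then $\lambda$-generatedness of $A$ collapses the right leg and hence the left. The paper instead works in $\ck$: it uses the local-generation hypothesis to write $B$ as a $\lambda$-directed colimit of $\ca$-objects along $\cm$-morphisms, pulls that diagram back along $e$, and uses the commutation hypothesis (transported from $\cv$ to $\ck$ via the canonical embedding $E$ of \ref{V-dense}) to conclude that some stage splits over $A$; a diagonalization then exhibits $B$ as a retract of an object of $\ca$, hence $\lambda$-generated by \ref{V-gen}.

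Your route is shorter and, as you note, does not actually need $\ck$ to be $\cm$-locally $\lambda$-generated---only that $(\ce,\cm)$ is a $\lambda$-convenient $\cv$-factorization system---so it gives a strict strengthening analogous to \ref{gen1}. The paper's route, on the other hand, produces extra structural information (that $B$ is a retract of an object of the generating set $\ca$), which is occasionally useful elsewhere. The bookkeeping you flag is harmless: naturality of the orthogonality squares in $i$ is immediate from functoriality of $\ck(-,-)$, and the colimit of the constant legs over the connected index $I$ is the identity, so the resulting left vertical is indeed the canonical comparison.
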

\begin{proof}
Let $e:A\to B$ is in $\ce$ and $A\in\ca$. Express $B$ as a $\lambda$-directed colimit $m_i:B_i\to B$, $i\in I$, of $\ca$-objects and $\cm$-morphisms. Form pullbacks
$$
		\xymatrix@=3pc{
			P_i \ar [r]^{\bar{e}}\ar[d]_{\bar{m}_i} & B_i \ar[d]^{m_i }\\
			 A \ar [r]_{e}& B
		}
		$$ 
Since pullbacks commute with $\lambda$-directed colimits in $\cv$, they do it in $\cv^{\ca^{\op}}$ as well. Since $E$ preserves pullbacks and $\lambda$-directed colimits of $\cm$-morphisms, pullbacks commute with $\lambda$-directed colimits of $\cm$-morphisms in $\ck$. Hence $\bar{m}_i:P_i\to A$ is a $\lambda$-directed colimit. Since $\cm$ is $\lambda$-convenient, $m_i\in\cm$ for every $i\in I$. Hence $\bar{m}_i\in\cm$ for every $i\in I$ and, thus, $\bar{m}_{ij}:P_i\to P_j$ are in $\cm$ for every $i<j\in I$. Since $A$ is $\lambda$-generated w.r.t. $\cm$, $m_{i_0}$ splits for some $i_0\in I$. Thus there exists $s:A\to P_{i_0}$ such that $\bar{m}_{i_0}s=\id_A$. We have 
$$
m_{i_0}\bar{e}s=e\bar{m}_{i_0}s=e.
$$
The commutative square 		
$$
		\xymatrix@=3pc{
			A \ar [r]^{e}\ar[d]_{\bar{e}s} & B \ar[d]^{\id_B }\\
			 B_{i_0} \ar [r]_{m_{i_0}}& B
		}
		$$ 
has the diagonal $t:B\to B_{i_0}$ making $B$ a retract of $B_{i_0}$. Since $t$ is a coequalizer of $\id_{B_{i_0}}$ and $tm_{i_0}$, \ref{V-gen} implies that $B$ is $\lambda$-generated w.r.t. $\cm$.
\end{proof}

\begin{lemma}\label{V-ort}
If $\ck$ is $\cm$-locally $\lambda$-generated $\cv$-category then $\cm=(\ce_\lambda)^{\perp_\cv}$.
\end{lemma}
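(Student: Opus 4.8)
The plan is to transcribe the proof of the unenriched Lemma~\ref{ort} into the $\cv$-enriched setting, replacing plain colimits by conical $\lambda$-directed ones and plain orthogonality by $\cv$-orthogonality, and to extract from it that every $\ce$-morphism is, up to isomorphism in $\ck^\to$, a conical $\lambda$-directed colimit of $\ce_\lambda$-morphisms. One inclusion is immediate: since $(\ce,\cm)$ is a $\cv$-factorization system we have $\cm=\ce^{\perp_\cv}$ (see \cite{LW}), and $\ce_\lambda\subseteq\ce$ forces $\cm=\ce^{\perp_\cv}\subseteq(\ce_\lambda)^{\perp_\cv}$.

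For the reverse inclusion, grant for the moment that every $e\colon A\to B$ in $\ce$ is isomorphic in $\ck^\to$ to a conical $\lambda$-directed colimit $\colim_i\varepsilon_i$ with each $\varepsilon_i\colon A_i\to C_i$ in $\ce_\lambda$ (so that $A=\colim A_i$ and $B\cong\colim C_i$, conical colimits in $\ck^\to$ being computed on domains and on codomains). Let $m\colon X\to Y$ lie in $(\ce_\lambda)^{\perp_\cv}$. The hom $\cv$-functor $\ck^\to(-,m)\colon(\ck^\to)^{\op}\to\cv$ sends this colimit to the limit $\lim_i\ck^\to(\varepsilon_i,m)$, and by the pullback description of $\ck^\to(-,m)$ in Remark~\ref{V-arrow}, together with the facts that hom $\cv$-functors carry colimits to limits and that limits commute with limits, the $\cv$-orthogonality comparison $\ck(B,X)\to\ck^\to(e,m)$ is identified with the limit over $i$ of the comparisons $\ck(C_i,X)\to\ck^\to(\varepsilon_i,m)$. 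Each of the latter is an isomorphism because $\varepsilon_i\in\ce_\lambda$ and $m\in(\ce_\lambda)^{\perp_\cv}$, hence so is their limit; that is, $m\perp_\cv e$. Since $e\in\ce$ was arbitrary, $m\in\ce^{\perp_\cv}=\cm$.

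It remains to produce the decomposition, following \ref{ort}. Write $A=\colim_{i\in I}A_i$ as a conical $\lambda$-directed colimit of $\ca$-objects, with connecting maps $a_{ij}$ and legs $\alpha_i$; this is available because $\ck$ is $\cm$-locally $\lambda$-generated. Factor each composite $e\alpha_i$ as $A_i\xrightarrow{\varepsilon_i}C_i\xrightarrow{m_i}B$ with $\varepsilon_i\in\ce$ and $m_i\in\cm$. Diagonalization (using $\varepsilon_i\in\ce$ and $m_j\in\cm$) yields connecting maps $w_{ij}\colon C_i\to C_j$ with $w_{ij}\varepsilon_i=\varepsilon_j a_{ij}$ and $m_j w_{ij}=m_i$; since $m_i,m_j\in\cm$ and $\cm$ has the cancellation property ($gf\in\cm$ and $g\in\cm$ imply $f\in\cm$), $w_{ij}\in\cm$, so $(C_i,w_{ij})_{i\in I}$ is a conical $\lambda$-directed diagram of $\cm$-morphisms. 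Its colimit $C=\colim C_i$ has $\cm$-legs; the induced arrow $e_\infty\colon A\to C$ lies in $\ce$ (which, being ${}^{\perp_\cv}\cm$, is closed under colimits in $\ck^\to$), and the induced arrow $m_\infty\colon C\to B$ lies in $\cm$ by $\lambda$-convenience, i.e.\ Definition~\ref{conv}(2) applied to the cocone $(m_i)$ with vertex $B$. Then $e=m_\infty e_\infty$ with $e\in\ce$, so uniqueness of $(\ce,\cm)$-factorizations makes $m_\infty$ an isomorphism, whence $e\cong e_\infty=\colim_i\varepsilon_i$ in $\ck^\to$. Finally each $C_i$ is an $\ce$-quotient of the $\lambda$-generated object $A_i$, hence $\lambda$-generated w.r.t.\ $\cm$ by Lemma~\ref{V-quot}, so $\varepsilon_i\in\ce\cap\ck_\lambda^\to=\ce_\lambda$, as needed.

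The main obstacle is concentrated in this last step. It is precisely there that the appeal to \ref{V-quot} imports the hypothesis that finite conical limits commute with $\lambda$-directed colimits in $\cv$ — without it the $C_i$ need not be $\lambda$-generated, and the decomposition would only be into $\ce$-morphisms with $\ca$-domains rather than into $\ce_\lambda$-morphisms. One must also be careful to use \emph{both} clauses of $\lambda$-convenience (the $\cm$-legs and the $\cm$-factorizing morphism) so that $m_\infty\in\cm$ and the factorization-uniqueness argument closes. By comparison, the enriched-orthogonality bookkeeping in the reduction step is routine, the only subtlety being that a conical colimit in $\ck^\to$ is the colimit of domains paired with the colimit of codomains in $\ck$.
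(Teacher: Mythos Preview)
Your argument is sound but takes a different route from the paper's one-line proof, which simply cites \ref{V-fact}, \ref{ort}, and \cite{LW}~5.4. The paper does not redo the decomposition in the enriched setting at all; instead it reduces to the \emph{unenriched} Lemma~\ref{ort} and then uses \cite{LW}~5.4 together with \ref{V-fact} to pass between ordinary and $\cv$-enriched orthogonality (so that $\cm=\ce^{\perp_\cv}\subseteq(\ce_\lambda)^{\perp_\cv}\subseteq(\ce_\lambda)^\perp=\cm$, the last equality being \ref{ort}). By contrast, you transcribe the proof of \ref{ort} wholesale into the $\cv$-enriched world, constructing $e\cong\colim_i\varepsilon_i$ with $\varepsilon_i\in\ce_\lambda$ and then checking $\cv$-orthogonality directly via the limit description of $\ck^\to(-,m)$.

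The cost of your route is exactly the one you identify: to conclude $C_i\in\ck_\lambda$ you invoke \ref{V-quot}, and this imports the hypothesis that finite conical limits commute with $\lambda$-directed colimits in $\cv$ --- a hypothesis not present in the statement of \ref{V-ort}. The paper's detour through the unenriched \ref{ort} avoids this, since the closure of $\lambda$-generated objects under $\ce$-quotients used there is the unenriched \ref{quot}, which needs nothing of $\cv$. So the paper's approach delivers the lemma at its stated generality, whereas yours establishes it only under the additional commutation assumption on $\cv$. If you want to match that generality, replace your final appeal to \ref{V-quot} by the reduction to ordinary orthogonality.
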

\begin{proof}	
It follows from \ref{V-fact}, \ref{ort} and \cite{LW} 5.4.
\end{proof} 

\begin{rem}\label{conical}
{
\em
(1) Given a small $\cv$-category, every $\cv$-functor $H:\ca\to\cv$ is a weighted colimit of representable $\cv$-functors. If $\cv$ is locally $\lambda$-presentable as a closed category (see \cite{K1} 5.5) then, for a small category $\ca$ with $\lambda$-small limits, any functor $H:\ca\to\cv$ preserving $\lambda$-small limits is a $\lambda$-filtered (and thus $\lambda$-directed) conical colimit of representable functors (see \cite{BQR} 4.5). In such a $\cv$, $\lambda$-small limits commute with $\lambda$-directed colimits (see \cite{BQR} 2.4).

(2) For a general $\cv$, let $H:\ca\to\cv$ preserve $\lambda$-small conical limits and consider the category $Y(\ca^{\op})\downarrow H$ of representable functors over $H$. This category is $\lambda$-filtered and let $H_\ast$ be its colimit in $\cv^\ca$ and $\gamma:H_\ast\to H$ be the comparison morphism. If $\ca$ is a $\lambda$-nest then, due to $(\ce,\cm)$ factorizations in $\ca$, the category $Y(\ca^{\op})\downarrow H$  has a cofinal subcategory $\cd$ whose morphisms are $Y(m):Y(A)\to Y(B)$ where $m$ are in $\cm$. $\cd$ is $\lambda$-directed and $H_\ast$ is it colimit of the projection $D:\cd\to \cv^\ca$ with a cocone $\varphi_d:Dd\to H_\ast$. 

Assume that $\cv$ is equipped with a $\lambda$-convenient factorization system $(\ce_\cv,\cm_\cv)$ and that hom-functors $\ca(A,-):\ca\to\cv$ send $\cm$-morphisms to $\cm_\cv$-morphisms. Them, for every $A$ in $\ca$, we have a $\lambda$-directed colimit $(\varphi_d)_A:Dd(A)\to H_\ast(A)$ of $\cm_\cv$-morphisms in $\cv$. Since the factorization system $(\ce,\cm)$ is $\lambda$-convenient, $(\varphi_d)_A$ are in $\cm$. 

Assume that $\cv$ is $\cm_\cv$-locally $\lambda$-generated and that $\ca$ has cotensors with $\lambda$-generated objects $V$ w.r.t. $\cm_\cv$. If $H$ preserves these cotensors then the argument from \cite{BQR} 4.5 yields that $\gamma$ is an isomorphism.  
}
\end{rem}

\begin{theo}\label{V-char}
Let $\cv$ be locally $\lambda$-presentable as a closed category. Then, for every $\cv$-category $\ck$ equivalent are:
\begin{enumerate}
\item $\ck$ is locally presentable,
\item $\ck$ is $\cm$-locally generated for some convenient $\cv$-factorization system $(\ce,\cm)$, and
\item $\ck$ is $\cm$-locally generated for every convenient $\cv$-factorization system $(\ce,\cm)$.
\end{enumerate}
\end{theo}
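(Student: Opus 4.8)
The plan is to transcribe the proof of \ref{char} into the enriched setting, replacing ordinary (co)limits by weighted ones, using the $\cv$-enriched lemmas \ref{V-gen}, \ref{V-quot} and \ref{V-ort} together with the density statement \ref{V-dense}, and invoking \ref{conical}(1) at the single point where the hypothesis that $\cv$ is locally $\lambda$-presentable as a closed category genuinely enters. As in \ref{char}, the implications $(3)\Rightarrow(1)$, $(1)\Rightarrow(2)$ and $(3)\Rightarrow(2)$ are formal: $(\Iso,\ck^\to)$ is always a convenient $\cv$-factorization system, and $\ck^\to$-locally generated $\cv$-categories are precisely the locally presentable ones, so $(3)$ applied to $(\Iso,\ck^\to)$ yields $(1)$, while $(1)\Rightarrow(2)$ follows from $(1)\Rightarrow(3)$. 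Hence the content is in $(1)\Rightarrow(3)$ and $(2)\Rightarrow(1)$.

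For $(1)\Rightarrow(3)$ I would argue as in part I of the proof of \cite{AR} 1.70, enriched. Given a locally presentable $\cv$-category $\ck$ and a convenient $\cv$-factorization system $(\ce,\cm)$, pick a regular $\nu\ge\lambda$ bounding both the presentability rank of $\ck$ and the convenience rank of $(\ce,\cm)$; then $\cv$ remains locally $\nu$-presentable as a closed category. Every $\nu$-presentable object of $\ck$ is $\nu$-generated w.r.t. $\cm$, and by \ref{V-gen1} and \ref{V-quot} the closure $\ca$ of the $\nu$-presentable objects under $\nu$-small weighted colimits and $\ce$-quotients again consists of $\nu$-generated objects w.r.t. $\cm$ and is a set. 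Presenting an arbitrary object as a $\nu$-directed conical colimit of $\nu$-presentable objects and replacing each leg by its $(\ce,\cm)$-factorization — the $\ce$-parts landing in $\ca$, the $\cm$-parts assembling into a $\nu$-directed colimit with the same colimit — exhibits $\ck$ as $\cm$-locally $\nu$-generated.

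For $(2)\Rightarrow(1)$, assume $\ck$ is $\cm$-locally $\lambda$-generated, enlarging $\lambda$ if necessary so that $\cv$ is locally $\lambda$-presentable as a closed category (cf. \ref{lambda}(1)). By \ref{V-dense}, $E:\ck\to\cl$, where $\cl=\cv^{\ca^{\op}}$, is a fully faithful right adjoint exhibiting $\ck$ as a full reflective sub-$\cv$-category of $\cl$ closed under $\lambda$-directed colimits of $\cm$-morphisms. Let $\cp$ be the set of reflections into $\ck$ of those $\cl$-objects that are either $\lambda$-small weighted colimits of diagrams in $\ca$ or codomains of multiple pushouts of $\ce$-morphisms with domain in $\ca$; this is a set by \ref{V-quot} and the fact, obtained as in \ref{gen}, that every $\lambda$-generated object w.r.t. $\cm$ is a retract (hence a finite colimit) of an $\ca$-object. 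Then $\Ort(\cp)$ is a locally presentable $\cv$-category containing $\ck$, and $\ck$ is closed in it under $\lambda$-small weighted colimits of $\ca$-diagrams, multiple pushouts of $\ce$-morphisms with $\ca$-domain, and $\lambda$-directed colimits of $\cm$-morphisms. Exactly as in \ref{char}, for each $f:A\to L$ with $A\in\ca$ and $L\in\Ort(\cp)$ one forms the cointersection $e_f:A\to K_f$ in $\ck$ of all $\ce$-morphisms through which $f$ factors, obtains $m_f:K_f\to L$ with $f=m_fe_f$, checks that $e_f\in\ce$ and that $m_f$ is $\ce$-extremal, and verifies the functoriality pushout square — the lifting properties needed being the $\cv$-enriched orthogonality of \ref{V-ort}. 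It remains to present every $L\in\Ort(\cp)$ as a $\lambda$-directed conical colimit of $\lambda$-small weighted colimits of $\ca$-diagrams; this is exactly \ref{conical}(1), since $\Ort(\cp)$ is a category of functors $\ca^{\op}\to\cv$ preserving $\lambda$-small weighted limits (cf. \ref{sketch}) and each such functor, over the locally $\lambda$-presentable closed category $\cv$, is a $\lambda$-filtered conical colimit of representables. Reflecting these pieces into $\ck$, factoring as above, and using $\ce$-extremality to force the transition maps into $\cm$, one realizes $L$ as a $\lambda$-directed colimit of $\cm$-morphisms between $\ca$-objects, so $L\in\ck$. Therefore $\ck=\Ort(\cp)$ is locally presentable.

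The main obstacle is this last step. Unlike in the set-based case, a weighted colimit does not obviously decompose into a conical $\lambda$-directed one, so the passage from the bare description $\Ort(\cp)$ to an honest $\lambda$-directed conical presentation of its objects hinges entirely on the structural input \ref{conical}(1): over a $\cv$ that is locally $\lambda$-presentable as a closed category, functors into $\cv$ preserving $\lambda$-small limits are $\lambda$-filtered conical colimits of representables, and $\lambda$-small (weighted) limits commute with $\lambda$-directed colimits. A secondary nuisance, inherited from \ref{char}, is the cardinal bookkeeping: the $\lambda$ witnessing $(2)$ may have to be enlarged — to a regular cardinal for which $\ck$ is still generated as required and $\cv$ is still locally presentable as a closed category — before the $(2)\Rightarrow(1)$ argument applies, matching Remark \ref{lambda}(1).
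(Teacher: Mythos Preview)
Your proposal is correct and follows the same route as the paper: transcribe \ref{char}, and at the point where one needs every $L\in\Ort(\cp)$ to be a $\lambda$-directed conical colimit of $\ca$-objects, invoke \ref{conical}(1). The paper's proof is terser than yours but makes one enriched-specific point explicit that you gloss over: after taking $\ca=\Gen_\lambda(\ck)$, it observes via \ref{V-gen} that $\cp$ is closed under tensors with $\lambda$-presentable objects of $\cv$, and then, arguing as in \ref{V-ort} (ultimately \cite{LW} 5.4), concludes $\cp^\perp=\cp^{\perp_\cv}$, so that $\Ort(\cp)$ is genuinely a $\cv$-orthogonality class and hence a locally presentable $\cv$-category. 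Your statement ``$\Ort(\cp)$ is a locally presentable $\cv$-category'' needs exactly this justification; using $\lambda$-small \emph{weighted} colimits in the definition of $\cp$, as you do, is what makes the tensor-closure go through, but you should say why ordinary orthogonality coincides with enriched orthogonality here rather than asserting the conclusion.
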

\begin{proof}
We proceed like in \ref{char}. In the implication $(2)\Rightarrow (1)$, we take $\Gen_\lambda(\ck)$ for $\ca$. Following \ref{V-gen}, $\cp$ is closed under finite tensors. Analogously as in \ref{V-ort}, we get that $\cp^\perp=\cp^{\perp_\cv}$. Hence $\Ort(\cp)$ consists of objects $\cv$-orthogonal to $\cp$. Finally, following \ref{conical}(1), every $L$ in $\Ort(\cp)$ is a $\lambda$-directed colimit of objects from $\Gen_\lambda(\ck)$.
\end{proof}

\begin{coro}\label{V-sketch}
Let $\cv$ be locally $\lambda$-presentable as a closed category and $\ck$ be an $\cm$-locally $\lambda$-generated $\cv$-category. Then $\ck$ is equivalent to the full subcategory of $\cv^{\Gen_\lambda(\ck)^{\op}}$ consisting of $\cv$-functors preserving $\lambda$-small limits and sending multiple pushouts of $\ce$-morphisms to multiple pullbacks.
\end{coro}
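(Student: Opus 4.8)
\emph{Sketch of a proof.} The plan is to run the proof of \ref{V-char} and then reinterpret, by a Yoneda computation, the orthogonality class that it produces. Taking $\ca=\Gen_\lambda(\ck)$ and $\cl=\cv^{\ca^{\op}}$, that proof exhibits $\ck$, through the dense $\cv$-functor $E\colon\ck\to\cl$, as the full subcategory $\Ort(\cp)$ of objects $\cv$-orthogonal to the small set $\cp$; here $\cp$ is the set of reflections in $\ck$ of those objects of $\cl$ that are either $\lambda$-small colimits (formed in $\cl$) of diagrams in $\ca$, or codomains of multiple pushouts (formed in $\cl$) of $\ce$-morphisms with domain in $\ca$, and the same proof shows $\cp^{\perp}=\cp^{\perp_\cv}$, so that $\cv$-orthogonality and ordinary orthogonality to $\cp$ coincide. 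It therefore remains to identify $\Ort(\cp)$, as a full subcategory of $\cv^{\Gen_\lambda(\ck)^{\op}}$, with the class of $\cv$-functors $M\colon\Gen_\lambda(\ck)^{\op}\to\cv$ that preserve $\lambda$-small limits and send multiple pushouts of $\ce$-morphisms to multiple pullbacks.

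This last identification is the $\cv$-enriched analogue of \ref{sketch} (proved there via \cite{AR} 1.33(8)), and I would carry it out as follows, writing $Y\colon\ca\hookrightarrow\cv^{\ca^{\op}}$, $A\mapsto\ca(-,A)$, for the restriction of $E$ to $\ca$. Given a $\lambda$-small diagram $(A_j)$ in $\ca$ with colimit $A^\ast$ in $\ca$ (which exists by \ref{V-gen}), the reflection in $\ck$ of the colimit $\colim_j Y(A_j)$ formed in $\cl$ is the representable $Y(A^\ast)$, the reflection morphism being the canonical comparison $\colim_j Y(A_j)\to Y(A^\ast)$; by the enriched Yoneda lemma, $M$ is $\cv$-orthogonal to it exactly when the canonical map $M(A^\ast)\to\lim_j M(A_j)$ is invertible, i.e.\ when $M$ preserves that $\lambda$-small limit. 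Given a multiple pushout of $\ce$-morphisms $e_i\colon A\to B_i$ with $A$ and all $B_i$ in $\ca$, its codomain $P$ again lies in $\ca$: the leg $A\to P$ is in $\ce$, since $\ce$ is closed under colimits in $\ck^\to$ (see \ref{conv1}), so $P$ is an $\ce$-quotient of $A\in\ca$ and \ref{V-quot} applies; hence the reflection in $\ck$ of the corresponding multiple pushout formed in $\cl$ is $Y(P)$, its reflection morphism is the canonical comparison into $Y(P)$, and $\cv$-orthogonality of $M$ to it says, once more by Yoneda, precisely that $M$ sends the multiple pushout $(e_i)$ to a multiple pullback in $\cv$. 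Reading these equivalences in both directions, and noting that conversely every $\lambda$-small colimit of $\ca$-objects and every wide span of $\ce$-morphisms in $\ca$ is witnessed by some member of $\cp$, identifies $\Ort(\cp)$ with the stated full subcategory.

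The step I expect to demand the most care is the faithful transcription of \ref{sketch} (i.e.\ of \cite{AR} 1.33(8)) to the $\cv$-enriched setting: one needs that a small family of morphisms in the locally $\lambda$-presentable $\cv$-category $\cv^{\ca^{\op}}$ has a $\cv$-orthogonal subcategory describable as the $\cv$-functors carrying a prescribed family of colimit cocones to limit cones, and this is exactly where local $\lambda$-presentability of $\cv$ as a closed category is used (as it already is in \ref{V-char}, through \ref{V-ort} and \ref{conical}(1)); everything else is Yoneda bookkeeping. Within that bookkeeping, the genuinely fiddly point is making the multiple-pushout clause match on the nose, i.e.\ that $\cv$-orthogonality to the reflected multiple pushouts of $\ce$-morphisms is neither weaker nor stronger than the condition ``sends multiple pushouts of $\ce$-morphisms to multiple pullbacks'', which as above reduces to the closure of $\ce$ under colimits in $\ck^\to$ together with \ref{V-quot}.
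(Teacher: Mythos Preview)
Your proposal is correct and follows essentially the same route as the paper: invoke the proof of \ref{V-char} to identify $\ck$ with $\Ort(\cp)$ inside $\cv^{\Gen_\lambda(\ck)^{\op}}$, and then translate orthogonality to $\cp$ into the limit-preservation conditions via Yoneda, i.e.\ via the enriched version of \cite{AR} 1.33(8). The paper's proof is merely a two-line pointer to this argument (noting the correct form of the comparison morphism $m_i:\colim\hom(Dd,-)\to\hom(\lim Dd,-)$ in the enriched setting), whereas you have spelled out the Yoneda bookkeeping and the reason the multiple-pushout codomain stays in $\Gen_\lambda(\ck)$; but the strategy is the same.
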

\begin{proof}
We proceed analogously as in \ref{sketch}. Note that \cite{AR} 1.33(8) is true in the enriched setting too but $m_i$ there should be
$$m_i:\colim\hom(Dd,-)\to\hom(\lim Dd,-).
$$ 
\end{proof}
\begin{nota}
{
\em
Again, we say that a $\cv$-category is \textit{locally $\lambda$-generated} if it is $\cm$-locally $\lambda$-generated for some $\cm$.  
}
\end{nota}

\begin{theo}\label{V-char1}
Assume that $\lambda$-small conical limits commute in $\cv$ with $\lambda$-directed colimits. Then a cocomplete $\cv$-category $\ck$ equipped with a $\lambda$-convenient $\cv$-factorization system $(\ce,\cm)$ is $\cm$-locally $\lambda$-generated iff its underlying category $\ck_0$ has a strong generator formed by $\lambda$-generated objects w.r.t. $\cm$.
\end{theo}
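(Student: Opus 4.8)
The plan is to follow the proof of the unenriched statement \ref{char1}, but reading the ``strong generator'' and the ``$\lambda$-filtered colimit'' hypotheses in the underlying cocomplete ordinary category $\ck_0$, and replacing \ref{gen} and \ref{quot} by their enriched analogues \ref{V-gen} and \ref{V-quot}; note that the commutation hypothesis of the theorem is exactly what those two lemmas ask for. For necessity, suppose $\ck$ is $\cm$-locally $\lambda$-generated and let $\ca$ be a witnessing set as in \ref{V-locgen}. Every object of $\ck$, in particular of $\ck_0$, is a conical $\lambda$-directed colimit of $\ca$-objects and $\cm$-morphisms, so $\ca$ is colimit-dense in $\ck_0$ and hence a strong generator of $\ck_0$; its objects are $\lambda$-generated w.r.t.\ $\cm$ by construction. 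This gives one implication.

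For sufficiency, let $\cg$ be a strong generator of $\ck_0$ whose objects are $\lambda$-generated w.r.t.\ $\cm$, and let $\ca$ be the closure of $\cg$ in $\ck$ under $\lambda$-small conical colimits and $\ce$-quotients. Since $(\ce,\cm)$ is $\lambda$-convenient, $\ck$ is $\ce$-cowellpowered (see \ref{conv}), so this closure is, up to isomorphism, still a set; it remains a strong generator of $\ck_0$ (a superset of one is one); and by \ref{V-gen} (here the commutation hypothesis enters) together with \ref{V-quot} its objects are still $\lambda$-generated w.r.t.\ $\cm$. Running the argument in the proof of \cite{AR} 1.11 inside $\ck_0$ (which is cocomplete because $\ck$ is a cocomplete $\cv$-category), with the strong generator $\ca$ closed under $\lambda$-small colimits, we obtain that every object $K$ of $\ck$ is the conical $\lambda$-filtered colimit of the canonical diagram $\ca\downarrow K\to\ck$.

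It then remains to pass from this presentation to a $\lambda$-directed colimit of $\ca$-objects and $\cm$-morphisms, by means of $(\ce,\cm)$-factorizations. Let $(\ca\downarrow K)_\cm$ be the full subcategory of $\ca\downarrow K$ on the arrows $A\to K$ lying in $\cm$. It is cofinal in $\ca\downarrow K$: any $a\colon A\to K$ with $A\in\ca$ factors as $a=m e$ with $e\in\ce$ and $m\in\cm$, and the codomain of $e$ lies in $\ca$ (closure under $\ce$-quotients), so $e$ exhibits $(A,a)$ over an object of $(\ca\downarrow K)_\cm$; $\lambda$-filteredness of $(\ca\downarrow K)_\cm$ is checked in the same way from that of $\ca\downarrow K$. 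Hence $(\ca\downarrow K)_\cm$ also has colimit $K$. Moreover its connecting arrows lie in $\cm$: if $m_2 f=m_1$ with $m_1,m_2\in\cm$, factoring $f=m'e'$ and applying uniqueness of $(\ce,\cm)$-factorizations to $m_1=(m_2 m')e'$ forces $e'$ to be invertible, so $f\in\cm$. Passing to a cofinal $\lambda$-directed poset, $K$ becomes a $\lambda$-directed colimit of $\ca$-objects and $\cm$-morphisms, so $\ck$ is $\cm$-locally $\lambda$-generated.

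The step I expect to require the most care is the transfer of the \cite{AR} 1.11-style argument and the cofinality manipulations to the \emph{conical} level inside $\ck_0$ (they are purely $\ck_0$-arguments, so this is legitimate, but one must resist working with weighted colimits here), together with the bookkeeping that the closure $\ca$ stays essentially small and inside $\ck_\lambda$ --- the latter being exactly where \ref{V-gen}, \ref{V-quot}, and hence the commutation hypothesis, are indispensable.
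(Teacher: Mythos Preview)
Your proposal is correct and follows essentially the same approach as the paper: close the strong generator under $\lambda$-small conical colimits and $\ce$-quotients using \ref{V-gen} and \ref{V-quot}, invoke the \cite{AR} 1.11 argument in $\ck_0$, and then pass to $\cm$-morphisms via $(\ce,\cm)$-factorizations. You have simply expanded the paper's terse three-line proof, spelling out the necessity direction, the cofinality of $(\ca\downarrow K)_\cm$, and the fact that its connecting morphisms lie in $\cm$.
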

\begin{proof}
Using \ref{V-gen} and \ref{V-quot}, the closure $\ca$ of our strong generator under $\lambda$-small conical colimits and $\ce$-quotients consists of $\lambda$-generated objects w.r.t. $\cm$.
Then we follow the proof of \ref{char1}.
\end{proof}

\section{Enriched Gabriel-Ulmer duality}
\begin{defi}
{
\em
A \textit{$(\lambda,\cv)$-nest} is a small $\cv$-category $\ca$ equipped with a $\cv$-factorization system $(\ce_\ca,\cm_\ca)$ and having $\lambda$-small limits and multiple pullbacks of $\cm$-morphisms.
}
\end{defi}

\begin{exam}\label{nest}
{
\em
$\Gen_\lambda(\ck)^{\op}$ is a $(\lambda,\cv)$-nest for every $\cm$-locally $\lambda$-generated $\cv$-category $\ck$. This follows from \ref{V-gen}, \ref{V-quot} and the fact that $(\cm_\ca\cap\ca^\to,\ce_\ca\cap\ca^\to)$ is a $\cv$-factorization system on $\Gen_\lambda(\ck)^{\op}$.
}
\end{exam}

\begin{rem}
{
\em
Let $\cv$ be locally $\lambda$-presentable as a closed category. Then a small $\cv$-category $\ca$ is a $(\lambda,\cv)$-nest if and only is it is a $\lambda$-nest and $\cm_\ca$ is closed under $\lambda$-small cotensors.

We get this analogously to \ref{V-fact} applied to $\ca^{\op}$.
}
\end{rem}

\begin{nota}
{
\em
For a $(\lambda,\cv)$-nest $\ca$, $\Mod_\lambda(\ca)$ denotes the category of all $\cv$-functors $\ca\to\cv$ preserving $\lambda$-small limits and multiple pullbacks of $\cm$-morphisms.
}
\end{nota}

\begin{lemma}\label{fact-V}
Let $\cv$ be locally $\lambda$-presentable as a closed category. Then $\Mod_\lambda(\ca)$ is a locally presentable $\cv$-category equipped with a $\cv$-fac\-to\-ri\-za\-tion system $(\ce,\cm)$.
\end{lemma}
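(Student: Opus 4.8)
The plan is to follow the proof of Lemma~\ref{fact} line by line, replacing each unenriched construction by its $\cv$-analogue; this is possible precisely because $\cv$ is locally $\lambda$-presentable as a closed category, which is exactly what keeps presheaf $\cv$-categories locally presentable and the relevant orthogonality classes small.

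First I would recall that $\cv^\ca$ is a locally $\lambda$-presentable $\cv$-category (see \cite{K1}), and that the codomain restriction of the $\cv$-enriched Yoneda embedding is a fully faithful $\cv$-functor $Y\colon\ca^{\op}\to\cv^\ca$ which factors through $\Mod_\lambda(\ca)$, since every representable $\cv$-functor $\ca(A,-)$ preserves all weighted limits, in particular $\lambda$-small ones and multiple pullbacks of $\cm_\ca$-morphisms. Next, exactly as in the proof of Theorem~\ref{model} — using the enriched form of \cite{AR} 1.33(8) recorded in the proof of \ref{V-sketch} — one writes $\Mod_\lambda(\ca)=\Ort(\cp)$ for a small set $\cp$ of morphisms of $\cv^\ca$, the orthogonality now being the $\cv$-enriched one. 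Hence $\Mod_\lambda(\ca)$ is a reflective $\cv$-subcategory of $\cv^\ca$ closed under $\lambda$-directed colimits, and is therefore itself locally $\lambda$-presentable as a $\cv$-category (see \cite{K1}); in particular it is cocomplete, tensored and cotensored.

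It remains to equip $\Mod_\lambda(\ca)$ with a $\cv$-factorization system. The $\cv$-factorization system on $\ca^{\op}$ is $(\cm_\ca,\ce_\ca)$; I transport its left class along $Y$ to the set $Y(\cm_\ca)$ of morphisms of $\Mod_\lambda(\ca)$ and set $\cm:=Y(\cm_\ca)^{\perp_\cv}$. As in the proof of \ref{V-ort} (cf. \cite{LW} 5.4), $\cv$-orthogonality to $Y(\cm_\ca)$ coincides with ordinary orthogonality to the small set $\cs:=\{\,V\otimes Y(m)\mid V\in\cv_\lambda,\ m\in\cm_\ca\,\}$ — small because $\cv$ is locally $\lambda$-presentable — so that $\cm=\cs^\perp$. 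Since the underlying ordinary category of $\Mod_\lambda(\ca)$ is locally presentable, \cite{FR} 2.2 then gives an (ordinary) factorization system $(\ce,\cm)$ with $\ce:={}^\perp\cm=\colim(\cs)$, which as in Lemma~\ref{fact} we may abbreviate $\ce=\colim(\cm_\ca)$, $\cm=\cm_\ca^{\perp_\cv}$. Finally, the (possibly large) class $\cs':=\{\,V\otimes Y(m)\mid V\in\cv,\ m\in\cm_\ca\,\}$ is closed under tensors and satisfies $\cs'^\perp=\cs^\perp=\cm$; a short adjunction argument using $V\otimes(-)\dashv\cv(V,-)$ then shows first that $\cm$ is closed under cotensors, and then that $\ce={}^\perp\cm$ is closed under tensors. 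By \ref{V-fact}, $(\ce,\cm)$ is thus a $\cv$-factorization system on the tensored $\cv$-category $\Mod_\lambda(\ca)$, completing the proof.

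I expect the last paragraph to be the only real obstacle: one has to verify that the factorization system generated by the Yoneda images of the $\cm_\ca$-morphisms is genuinely $\cv$-enriched, i.e. that replacing ordinary orthogonality by $\cv$-orthogonality does not shrink the right class and, equivalently by \ref{V-fact}, that the left class is closed under tensors. Everything preceding it is a faithful transcription of the unenriched argument; the place where $\cv$ being locally $\lambda$-presentable as a closed category is genuinely used is in keeping both $\cp$ and $\cs$ small while passing freely between the enriched and ordinary orthogonality operators.
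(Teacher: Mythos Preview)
Your argument is correct and follows the same line as the paper's proof, which is the one-sentence ``proceed like in \ref{fact} with \cite{AR} 1.51 replaced by \cite{BQR} 7.3; we also use \ref{V-fact}.'' The only differences are cosmetic: the paper appeals to \cite{BQR} 7.3 rather than \cite{K1} for local presentability of $\Mod_\lambda(\ca)$, and your final paragraph carefully verifies closure of $\ce$ under tensors, which is exactly the content the paper's terse invocation of \ref{V-fact} leaves implicit.
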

\begin{proof}
We proceed like in \ref{fact} with \cite{AR} 1.51 replaced by \cite{BQR} 7.3. We also use \ref{V-fact}.
\end{proof}

\begin{theo}\label{V-models}
Let $\cv$ be locally $\lambda$-presentable as a closed category. Then $\Mod_\lambda(\ca)$ is a $\cm$-locally $\lambda$-generated $\cv$-category for every $(\lambda,\cv)$-nest $\ca$.
\end{theo}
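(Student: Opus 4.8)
The plan is to transcribe the proof of Theorem~\ref{model} into the enriched setting: replace $\Set$ by $\cv$, ordinary colimits by conical ones, and ordinary orthogonality by $\cv$-orthogonality, feeding in the enriched lemmas of Section~4. Write $\ck=\Mod_\lambda(\ca)$ and $\cl=\cv^\ca$. By \ref{fact-V}, $\ck$ is a locally presentable (hence cocomplete) $\cv$-category equipped with a $\cv$-factorization system $(\ce,\cm)$, and, proceeding as in \ref{fact}, one has $\ce=\colim(\cm_\ca)$ and $\cm=\cm_\ca^\perp$, which by \cite{LW}~5.4 (as in \ref{V-ort}) equals $\cm_\ca^{\perp_\cv}$. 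Using the enriched form of \cite{AR}~1.33(8) recorded in the proof of \ref{V-sketch}, present $\ck$ as $\Ort(\cp)$ for $\cv$-orthogonality, where $\cp$ is the small set of comparison morphisms witnessing preservation of $\lambda$-small limits of diagrams in $\ca^{\op}$ together with those witnessing preservation of multiple pushouts of $\cm_\ca$-morphisms in $\ca^{\op}$; by the dictionary recalled before \ref{sketch}, the latter are attached to objects $P$ realized in $\cl$ as colimits of smooth well-ordered chains of $\cm_\ca$-morphisms. In particular $\ck$ is a full reflective sub-$\cv$-category of $\cl$ and conical colimits in $\cl$ are pointwise.

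\textbf{Step I (the crux).} The first task is to show $\ck$ is closed in $\cl$ under conical $\lambda$-directed colimits of $\cm$-morphisms. Given such a colimit $k_i\colon K_i\to K$, formed pointwise, I would check that $K$ is $\cv$-orthogonal to each $r\in\cp$. When $r$ comes from a $\lambda$-small diagram in $\ca^{\op}$, orthogonality to $r$ amounts to preservation of the corresponding $\lambda$-small limit, and since $\lambda$-small limits commute with $\lambda$-directed colimits in $\cv$ (\ref{conical}(1)) and colimits in $\cl$ are pointwise, this passes to $K$. When $r\colon P\to A$ with $A\in\ca^{\op}$ comes from a multiple pushout, write $P=\colim_j P_j$ as a smooth chain of $\cm_\ca$-morphisms $p_{ij}\colon P_j\to P_i$ and, given $f\colon P\to K$, lift $f$ stepwise to a \emph{fixed} $K_{i_0}$ exactly as in \ref{model}: choose $g_0\colon P_0\to K_{i_0}$ with $k_{i_0}g_0=fp_0$ (possible since $P_0$ is representable); at a successor step, where the lift first wants to land in a larger $K_{i'}$, use that $k_{i_0i'}\in\cm=\cm_\ca^{\perp_\cv}$ and $p_{j,j+1}\in\cm_\ca$ to obtain a unique diagonal pulling the lift back to $K_{i_0}$; at limit steps pass to the colimit of the chain so far. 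This yields $g\colon P\to K_{i_0}$ with $k_{i_0}g=f$, and since $K_{i_0}\in\ck$ is orthogonal to $r$, $f$ factors through $r$; uniqueness of the factorization follows because $A$ is representable. Hence $K$ is $\cv$-orthogonal to $\cp$, i.e. $K\in\ck$, and the colimit of the $K_i$ in $\ck$ agrees with the one in $\cl$.

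\textbf{Step II.} Using Step~I, evaluation at $A$ shows every object of $\ca^{\op}$ is $\lambda$-generated w.r.t. $\cm$ in $\ck$, as conical $\lambda$-directed colimits of $\cm$-morphisms in $\ck$ are computed pointwise in $\cl$. By \ref{fact-V} and \ref{conv1}, $\cm$ satisfies \ref{conv}(2). Arguing as in \ref{V-quot} --- whose hypothesis holds since finite conical limits commute with $\lambda$-directed colimits in $\cv$ --- every $\ce$-quotient of an $\ca^{\op}$-object is $\lambda$-generated w.r.t. $\cm$; as every $\lambda$-generated object w.r.t. $\cm$ is a retract of an $\ca^{\op}$-object (cf. \ref{V-gen}) and $\ca^{\op}$ is Cauchy complete, hence closed under retracts, this gives $\ca^{\op}=\Gen_\lambda(\ck)$, a set. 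By \ref{conical}(1), every object $H$ of $\ck$ is a conical $\lambda$-directed colimit of representables; factoring the colimit legs through $(\ce,\cm)$, using $\ce$-quotient closure and the cancellation property of $\cm$ (as in the proof of \ref{char1}), rewrite $H$ as a conical $\lambda$-directed colimit of $\ca^{\op}$-objects and $\cm$-morphisms. Finally, $\ce$-cowellpoweredness follows verbatim from \ref{model}: an $\ce$-quotient $e\colon K\to L$ is a $\lambda$-directed colimit of $\ce$-morphisms between $\ca^{\op}$-objects, and $K$ has only a set of presentations as a $\lambda$-directed colimit of $\ca^{\op}$-objects. Thus $(\ce,\cm)$ is $\lambda$-convenient and $\ck$ is $\cm$-locally $\lambda$-generated.

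The step I expect to be the main obstacle is Step~I. As already in the unenriched case, closure under $\lambda$-directed colimits of $\cm$-morphisms cannot be reduced to a commutation of limits and colimits --- the smooth chain presenting $P$ may be arbitrarily long --- so one is forced through the stepwise lifting; the enriched subtlety is to carry this out $\cv$-enrichedly, i.e. to know that $\cm=\cm_\ca^{\perp_\cv}$, that the chain, the comparison maps and the colimit all behave pointwise in $\cl=\cv^\ca$, and that $\ck=\Ort(\cp)$ for $\cv$-orthogonality. This is exactly where the standing hypothesis that $\cv$ is locally $\lambda$-presentable as a closed category --- and the consequent enriched forms of \cite{AR}~1.33(8), \cite{AR}~1.11 and \ref{conical} --- is indispensable.
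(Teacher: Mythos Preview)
Your proposal is correct and is essentially the paper's approach spelled out in detail: the paper's own proof is the single line ``It follows from \ref{fact-V}, \ref{model} and \ref{V-fact}'', which is precisely the instruction to rerun the argument of \ref{model} in the enriched setting, with \ref{fact-V} replacing \ref{fact} and \ref{V-fact} ensuring the factorization system is a $\cv$-factorization system. Your Step~I/Step~II decomposition, the stepwise lifting along the smooth chain using $\cm=\cm_\ca^{\perp_\cv}$, and the appeal to \ref{conical}(1) to present objects as $\lambda$-directed conical colimits of representables are exactly the enriched transcriptions the paper has in mind.
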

\begin{proof}
It follows from \ref{fact-V}, \ref{model} and \ref{V-fact}.
\end{proof}

\begin{defi}
{
\em
A \textit{morphism of $\lambda$-generated $\cv$-categories} $R: \ck \to \cl$ is a right $\cv$-adjoint preserving $\cm$-morphisms and $\lambda$-directed colimits of them. 

A \textit{morphism of $(\lambda,\cv)$-nests} $F: \ca \to \cb$ is a $\cv$-functor preserving $\lambda$-small limits, $\cm$-morphisms and multiple pullbacks of them.
}
\end{defi}

\begin{theo}\label{V-dual}
Let $\cv$ be locally $\lambda$-presentable as a closed category. Then \[ \Gen_\lambda:\LG _\lambda \leftrightarrows \N^{\op}_\lambda : \Mod_\lambda \] is a dual biequivalence between locally $\lambda$-generated $\cv$-categories and $(\lambda,\cv)$-nests.
\end{theo}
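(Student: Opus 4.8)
The plan is to transcribe the proof of Theorem~\ref{dual} into the $\cv$-enriched setting: first promote $\Gen_\lambda$ and $\Mod_\lambda$ to $2$-functors $\Gen_\lambda\colon\LG_\lambda\to\N_\lambda^{\op}$ and $\Mod_\lambda\colon\N_\lambda^{\op}\to\LG_\lambda$ --- where now $\LG_\lambda$ denotes the $2$-category of locally $\lambda$-generated $\cv$-categories and $\N_\lambda$ that of $(\lambda,\cv)$-nests, with $\cv$-natural transformations as $2$-cells --- and then produce pseudonatural equivalences $\Id\simeq\Mod_\lambda\circ\Gen_\lambda$ and $\Id\simeq\Gen_\lambda\circ\Mod_\lambda$ whose components are the object-level equivalences described below. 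This is the enriched form of Remark~\ref{eq} together with the enriched analogues of the two Constructions following \ref{adjoint}.

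For the object-level equivalences I would argue as follows. Given a $\cm$-locally $\lambda$-generated $\cv$-category $\ck$, Corollary~\ref{V-sketch} identifies $\ck$ with the full sub-$\cv$-category of $\cv^{\Gen_\lambda(\ck)^{\op}}$ of $\cv$-functors preserving $\lambda$-small limits and sending multiple pushouts of $\ce$-morphisms to multiple pullbacks; since $\Gen_\lambda(\ck)^{\op}$ is a $(\lambda,\cv)$-nest by \ref{nest}, this full subcategory is by definition $\Mod_\lambda(\Gen_\lambda(\ck)^{\op})$, and the identification respects the factorization systems by \ref{V-ort} together with the description of $(\ce,\cm)$ on $\Mod_\lambda$ in \ref{fact-V}. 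Conversely, given a $(\lambda,\cv)$-nest $\ca$, Theorem~\ref{V-models} says $\ck=\Mod_\lambda(\ca)$ is $\cm$-locally $\lambda$-generated, and the argument of \ref{model} (to which \ref{V-models} appeals) shows that the $\lambda$-generated objects w.r.t.\ $\cm$ in $\ck$ are exactly the representables; hence $\Gen_\lambda(\Mod_\lambda(\ca))^{\op}\simeq\ca$, again compatibly with the factorization systems.

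For functoriality I would supply the enriched analogues of Lemma~\ref{preservesmall} and of the two Constructions. A morphism $R\colon\ck\to\cl$ of $\lambda$-generated $\cv$-categories has a left $\cv$-adjoint $L$; running the computation of \ref{preservesmall} in $\cv$ instead of $\Set$ shows that $L$ preserves $\lambda$-generated objects w.r.t.\ $\cm$, so by \ref{V-quot} and \ref{V-gen} it corestricts, after passing to opposites, to a morphism of $(\lambda,\cv)$-nests $\Gen_\lambda(\cl)^{\op}\to\Gen_\lambda(\ck)^{\op}$. A morphism $F\colon\ca\to\cb$ of $(\lambda,\cv)$-nests gives $\Mod_\lambda(F)=(-)\circ F\colon\Mod_\lambda(\cb)\to\Mod_\lambda(\ca)$, the domain restriction of precomposition $\cv^\cb\to\cv^\ca$; the latter has an enriched left adjoint (enriched left Kan extension along $F$), which restricts to $\Ind_\lambda(\ca)\to\Ind_\lambda(\cb)$ and then to $\Mod_\lambda$, and preserves $\ce$-morphisms by the colimit estimate of \ref{models}, using that $F$ sends $\cm_\ca$-morphisms to $\cm_\cb$-morphisms and that $\ce=\colim(\cm_\ca)$. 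Hence $\Mod_\lambda(F)$ preserves $\cm$-morphisms, so it is a morphism of $\lambda$-generated $\cv$-categories; $\cv$-natural transformations are transported on the nose in both directions.

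It then remains to check that the object-level equivalences are pseudonatural in $\ck$ and in $\ca$: each naturality square reduces, exactly as in the classical Gabriel-Ulmer duality, to a (co)restriction of the corresponding square for the $\cv$-enriched Yoneda embedding and its associated presheaf adjunction, so this is routine, and it is where we use that $\Mod_\lambda(F)$ and its adjoint are the restrictions of honest precomposition/Kan-extension functors on presheaf $\cv$-categories. The one genuinely non-formal ingredient --- and the place where the hypothesis that $\cv$ be locally $\lambda$-presentable as a closed category is indispensable --- is already packaged into \ref{V-char}, \ref{V-sketch} and \ref{V-models}, namely the enriched analogue of \ref{conical}(1): that every model of a $(\lambda,\cv)$-nest is a \emph{conical} $\lambda$-directed colimit of representables. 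Granting that input, the whole statement is a transcription of the unenriched argument, so I expect the proof to be short, reading essentially ``it follows from \ref{V-sketch}, \ref{V-models} and the enriched analogues of \ref{preservesmall} and \ref{models}.''
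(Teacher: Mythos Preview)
Your proposal is correct and follows essentially the same approach as the paper: the paper's proof is a one-liner ``It follows from \ref{dual}, \ref{nest} and \ref{V-models}'', so you have in fact spelled out considerably more than the authors do. The only cosmetic difference is that the paper invokes the unenriched duality \ref{dual} directly (together with \ref{nest} and \ref{V-models}) rather than citing \ref{V-sketch} and the enriched analogues of \ref{preservesmall} and \ref{models} separately; your more detailed unpacking is entirely compatible with this.
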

\begin{proof}
It follows from \ref{dual}, \ref{nest} and \ref{V-models}.
\end{proof}

\begin{rem}
{
\em
(1) Our duality restricts to the enriched Gabriel-Ulmer duality between locally $\lambda$-presentable $\cv$-categories and small $\cv$-categories with $\lambda$-small limits given in \cite{K1}. 

(2) \ref{duality} (2) applies to the enriched case as well.
}
\end{rem}

\section{Banach spaces}
 
Let $\CMet$ be the category of (generalized) metric spaces and nonexpanding maps. This category
is symmetric monoidal closed and locally $\aleph_1$-presentable as a closed category (see \cite{AR2} 2.3(2) and 4.5(2)). But it is not locally $\aleph_0$-presentable. In fact, only the empty space is $\aleph_0$-presentable in $\CMet$ (see \cite{AR2} 2.7 (1)). $\CMet$ has a strong generator consisting of a one-point space $1$ and of two-point spaces $2_{\eps}$ where the two points have the distance $\eps>0$.

The category $\Ban$ of Banach spaces and linear maps of norm $\leq 1$ is enriched over $\CMet$. Moreover, it is locally $\aleph_1$-presentable $\CMet$-category (see \cite{AR2} 6.3).

\begin{rem}
{
\em
(1) Epimorphisms in $\CMet$ or $\Ban$ coincide with dense maps. See \cite{P} 1.15 for $\Ban$ and the argument for $\CMet$ is analogous. Both in $\CMet$ and $\Ban$, there is a factorization system $(\ce,\cm)$ where $\ce$ consists of dense maps and $\cm$ of isometries (see \cite{AR2} 3.16(2)). Hence isometries coincide, both in $\CMet$ and in $\Ban$ with strong monomorphisms. Both $\CMet$ and $\Ban$ are $\ce$-cowellpowered and, from the description of directed colimits (see \cite{AR2} 2.5), it follows that $(\ce,\cm)$ is, in the both cases, $\aleph_0$-convenient.

(2) Both in $\CMet$ and $\Ban$, $\aleph_0$-generated objects w.r.t. $\cm$ coincide with approximately $\aleph_0$-generated objects in the sense of \cite{AR2} (see 5.11(3) in this paper). In $\CMet$, approximately $\aleph_0$-generated finite metric spaces are precisely discrete ones (see \cite{AR2} 5.18 and 5.19). 

(3) $\Ban$ is $\cm$-locally $\aleph_0$-generated (see \cite{AR2} 7.8). Every finite-dimensional Banach space is approximately $\aleph_0$-generated (\cite{AR2} 7.6). 

(4) The concept of a finite weight does not make sense in $\CMet$ because only $\emptyset$ is $\aleph_0$-presentable. Thus, under finite limits in $\CMet$, we understand finite conical limits, cotensors with finite metric spaces, and their combinations. Here, cotensors with finite metric spaces can be replaced by $\varepsilon$-pullbacks (see \cite{AR2} 4.6). But 
in $\CMet$, $\aleph_0$-generated objects w.r.t. $\cm$ are not closed under these finite colimits (see \cite{AR2} 5.20). Hence, following \ref{V-gen} and \cite{AR2} 5.20 and 4.1(4), finite limits do not commute with directed colimits in $\CMet$.
}
\end{rem}

\begin{rem}
{
\em
A metric space is called \textit{convex} if for every points $x$ and $y$ there is a point $z$ such that $d(x,z)+d(z,y)=d(x,y)$. A subset $S$ of a metric space $A$ is called a \textit{metric segment} if for every two distinct points $a\neq b$ there is an isometry $f:[0,d(a,b)]\to A$ from a closed interval on $\Bbb R$ such that $f(0)=a$, $f(d(a,b))=b$ and $f([0,d(a,b)])=S$. A complete metric space is convex iff every distinct points are connected by a metric segment (see \cite{Br}).
The proof consists in creating a dense set of points between $a$ and $b$ and taking its completion. Hence it also applies to $d(a,b)=\infty$ where the interval is $[0,\infty]$ in $\Bbb R$ with $\infty$ added.

}
\end{rem}

\begin{lemma}\label{convex}
For every $\delta>0$, $\CMet(2_{\delta},-):\CMet\to\CMet$ preserves directed colimits of convex spaces and isometries.
\end{lemma}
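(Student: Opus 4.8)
The plan is to unwind the hom-functor $\CMet(2_\delta,-)$ concretely and then use the convexity hypothesis to produce the lifts needed for a directed colimit to be preserved. First I recall that, for a generalized metric space $X$, the space $\CMet(2_\delta, X)$ has underlying set the collection of pairs $(x_0,x_1)\in X\times X$ with $d(x_0,x_1)\le\delta$, and the metric on this hom-space is $\sup$ of the pointwise distances (this is the cotensor/hom description in $\CMet$, cf. \cite{AR2}). So if $X=\colim_{i\in I} X_i$ is a directed colimit of convex spaces along isometries $m_{ij}\colon X_i\to X_j$, then by the description of directed colimits in $\CMet$ (\cite{AR2} 2.5) every point of $X$ is represented by a point of some $X_i$, and distances in $X$ are infima of distances of representatives. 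The comparison map $\colim_i \CMet(2_\delta,X_i)\to\CMet(2_\delta,X)$ is always an isometric embedding (since the $m_{ij}$ are isometries, each $\CMet(2_\delta,m_{ij})$ is an isometry, so the colimit on the left is computed by the same recipe); the content is \emph{surjectivity up to the colimit}, i.e. that every pair $(x_0,x_1)$ in $X$ with $d(x_0,x_1)\le\delta$ is, in the colimit, arbitrarily well approximated by pairs coming from a single $X_i$.

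The key step is this surjectivity. Given $(x_0,x_1)$ with $d_X(x_0,x_1)\le\delta$, pick representatives $a\in X_i$ of $x_0$ and $b\in X_j$ of $x_1$; passing to a common index I may assume $i=j$, so $a,b\in X_i$. Now $d_{X_i}(a,b)$ need not be $\le\delta$ — only $d_X(x_0,x_1)\le\delta$, and $d_X$ is the infimum over later stages. But for any $\eta>0$ there is $i\le k$ with $d_{X_k}(m_{ik}a,m_{ik}b)\le\delta+\eta$. Here is where convexity enters: in the convex space $X_k$, the points $m_{ik}a$ and $m_{ik}b$ are joined by a metric segment, so I can choose a point $c$ on that segment with $d(m_{ik}a,c)=\delta$ and $d(c,m_{ik}b)\le\eta$ (rescale the interval parametrization). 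Then the pair $(m_{ik}a, c)$ lies in $\CMet(2_\delta,X_k)$ and maps into $\CMet(2_\delta,X)$ to a pair at distance $\le\eta$ from $(x_0,x_1)$. Letting $\eta\to 0$ shows the comparison map has dense image; being also an isometric embedding and (since everything in sight is complete) having closed image, it is an isomorphism in $\CMet$.

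I expect the main obstacle to be purely bookkeeping-level care about the difference between equality and $\varepsilon$-approximation in directed colimits of metric spaces: one must be scrupulous that $d_X(x_0,x_1)$ is an infimum and not attained at any finite stage, so the correction point $c$ genuinely has to be produced at a possibly-later stage $X_k$ and with an $\eta$-error, and then one argues by density rather than by an exact lift. A secondary point worth stating is why the comparison map is a $\CMet$-morphism of hom-\emph{objects} (not just a function): this follows because the $\sup$-metric on $\CMet(2_\delta,-)$ is functorial and directed colimits of isometries are again isometries on the hom-objects, so the colimit comparison is nonexpanding, and the approximation argument above upgrades it to a surjective isometry. Everything else — that cotensoring by $2_\delta$ commutes with the relevant colimits, completeness of the hom-spaces — is routine given the explicit description of $\CMet$.
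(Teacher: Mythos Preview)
Your overall strategy --- show the comparison map is an isometric embedding, then use convexity to prove it has dense image, then conclude by completeness --- is exactly the paper's approach. But the execution contains a genuine error in how you locate the $\eta$-slack.

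You write that ``every point of $X$ is represented by a point of some $X_i$'' and that ``$d_X$ is the infimum over later stages \dots\ not attained at any finite stage''. For a directed colimit along \emph{isometries} both claims are wrong in the way that matters. Since each $m_{ij}$ is an isometry, once $a,b\in X_i$ are chosen one has $d_{X_k}(m_{ik}a,m_{ik}b)=d_{X_i}(a,b)$ for every $k\ge i$; the infimum is constant and attained immediately. So your step ``for any $\eta>0$ there is $i\le k$ with $d_{X_k}(m_{ik}a,m_{ik}b)\le\delta+\eta$'' gives nothing: either $d_{X_i}(a,b)\le\delta$ already (and convexity is never used) or it fails for all $k$. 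The real obstruction is the one you do not name: the colimit in $\CMet$ is the \emph{completion} of $\bigcup_i X_i$, so $x_0,x_1$ need not lie in any $X_i$ at all. You only get $a,b\in X_i$ with $d(k_ia,x_0),d(k_ib,x_1)\le\eps/2$, and then the triangle inequality in $X$ (together with $k_i$ being an isometry) gives $d_{X_i}(a,b)\le\delta+\eps$. This is where the excess length $\eps$ actually comes from, and this is what the paper does.

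Once you make that correction your convexity step works: on a metric segment in $X_i$ joining $a$ and $b$ choose $a',b'$ with $d(a,a'),d(b,b')\le\eps/2$ and $d(a',b')\le\delta$, and then $g=(a',b')\colon 2_\delta\to X_i$ satisfies $d(k_ig,f)\le\eps$. Your closing argument (isometric embedding with dense image into a complete space is onto) is correct and in fact makes explicit a step the paper leaves tacit.
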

\begin{proof}
Let $k_i:K_i\to K$, $i\in I$ be a directed colimit of convex complete metric spaces and isometries. Consider $f:2_\delta\to K$ and choose $\eps>0$. Denote the two points of $2_\delta$ as $x$ and $y$. There is $i\in I$ and $a,b\in K_i$ such that $d(a,fx), d(b,fy)\leq\frac{\eps}{2}$. Then
$$
d(a,b)\leq d(a,fx) + d(fx,fy) + d(fy,b)\leq \eps + \delta.
$$
Let $S$ be the metric segment in $K_i$ connecting $a$ and $b$. Choose $a',b'\in S$ such that $d(a,a'),d(b,b')\leq\frac{\eps}{2}$. Then $d(a',b')\leq\delta$. Let $g:2_\delta\to K_i$ be given by $fx=a'$ and $fy=b'$. Then $d(f,k_ig)$ because $d(a',fx)\leq d(a',a) + d(a,fx)\leq\eps$ and similarly $d(b',fy)\leq\eps$. 
\end{proof}

\begin{exam}
{
\em
There is an approximately $\aleph_0$-generated Banach spaces which is not finite-dimensional.
Consider the complete metric space $A=\{0\}\cup\{\frac{1}{n};n=1.2.,\dots\}$. Let $A_m=\{\frac{1}{n};n=1,2,\dots,m\}$ and $r_m:A_m\to A$ be the inclusion. Let $u_m:A\to A_m$ be the identity on $A_m$ and send $A\setminus A_m$ to $\frac{1}{m}$. We have $r_mu_m\sim_{\frac{1}{m}}\id_{A_m}$. 

Let $F:\CMet\to\Ban$ be the left adjoint to the unit ball functor $U:\Ban\to\CMet$ (see \cite{AR2} 4.5(3)). This adjunction is enriched and thus $F(r_mu_m)\sim_{\frac{1}{m}}\id_{F(A_m)}$. 
Since $F(1)=\Bbb C$, $F$ sends finite discrete metric spaces to finite-dimensional Banach spaces.
We have surjective maps $D_m\to A_m$ where $D_m$ is discrete space of $m$ points. Since $U$ preserves isometries, $F$ preserves $\ce$-maps. Thus $F(A_m)$ are finite-dimensional Banach spaces. Following \cite{AR2}7.7(1), $F(A)$ is approximately $\aleph_0$-generated.

Since $u_m$ are surjective, $F(u_m)$ are dense. Thus $F(A)$ cannot be finite-dimensional because it has dense maps to $m$-dimensional Banach spaces for every $m$.
}
\end{exam}

\begin{lemma}\label{e-push}
An $\eps$-pushout in $\Ban$ is
$$
\xymatrix@=3pc{
A\ar[r]^{f} \ar[d]_{g} & B\ar[d]^{\overline{g}} \\
C \ar[r]_{\overline{f}} & B\oplus_{f,g,\eps} C
}
$$
where $B\oplus_{f,g,\eps} C$ is the coproduct $B\oplus C$ endowed with the norm
$$
\pa (x,y)\pa=\inf\{\pa b\pa + \pa c\pa + \eps\pa a\pa\backslash\, x= b+f(a),y=c-g(a)\}.
$$
\end{lemma}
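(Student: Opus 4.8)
The plan is to verify that $P := B\oplus_{f,g,\eps}C$, equipped with the structure maps $\overline{g}\colon B\to P$, $b\mapsto(b,0)$ and $\overline{f}\colon C\to P$, $c\mapsto(0,c)$, has the universal property of the $\eps$-pushout of the span $B\xleftarrow{f}A\xrightarrow{g}C$: namely, $\overline{g}f$ and $\overline{f}g$ differ by at most $\eps$ in operator norm, and for every $D$ in $\Ban$ and every pair $u\colon B\to D$, $v\colon C\to D$ with $\pa uf-vg\pa\le\eps$ there is a unique $w\colon P\to D$ with $w\overline{g}=u$ and $w\overline{f}=v$ (this is the evident dual of the $\eps$-pullback of \cite{AR2} 4.6). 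Three things need to be checked, in this order: (i) the displayed formula defines a genuine Banach-space norm on $B\oplus C$; (ii) $\overline{g}$ and $\overline{f}$ are short and the square $\eps$-commutes; (iii) the factorization property.

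For (i), the expression is positively homogeneous and subadditive by the usual manipulation of such infima, and taking $a=0$, $b=x$, $c=y$ gives $\pa(x,y)\pa\le\pa x\pa+\pa y\pa$. Conversely, for every admissible decomposition $x=b+f(a)$, $y=c-g(a)$ the non-expansiveness of $f$ and $g$ yields $\pa x\pa+\pa y\pa\le\pa b\pa+\pa c\pa+2\pa a\pa\le\max(1,2/\eps)(\pa b\pa+\pa c\pa+\eps\pa a\pa)$, and passing to the infimum gives $\pa x\pa+\pa y\pa\le\max(1,2/\eps)\,\pa(x,y)\pa$. Hence the new norm is equivalent to the $\ell^1$-norm on $B\oplus C$, so it is indeed a norm (in particular positive definite) and $B\oplus_{f,g,\eps}C$ is complete; no completion is needed, and this is precisely where $\eps>0$ enters. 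For (ii), $\pa(b,0)\pa\le\pa b\pa$ and $\pa(0,c)\pa\le\pa c\pa$ show $\overline{g}$ and $\overline{f}$ are short, and choosing the decomposition $b=c=0$ with the given $a$ gives $\pa(\overline{g}f-\overline{f}g)(a)\pa=\pa(f(a),-g(a))\pa\le\eps\pa a\pa$, so $\pa\overline{g}f-\overline{f}g\pa\le\eps$.

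For (iii), given $u$ and $v$ as above, set $w(x,y)=u(x)+v(y)$, a linear map $B\oplus C\to D$. For any admissible decomposition $x=b+f(a)$, $y=c-g(a)$ one computes $w(x,y)=u(b)+v(c)+(uf-vg)(a)$, whence $\pa w(x,y)\pa\le\pa b\pa+\pa c\pa+\eps\pa a\pa$; taking the infimum over decompositions shows $w$ is short, i.e.\ $w\in\Ban(P,D)$. Plainly $w\overline{g}=u$ and $w\overline{f}=v$, and uniqueness is forced by $(x,y)=\overline{g}(x)+\overline{f}(y)$. The only step that is not a mechanical unwinding of definitions is the norm-equivalence estimate in (i): it is what guarantees that the infimum formula produces an honest Banach space rather than a seminormed space requiring Hausdorffification and completion, and it is the reason the construction needs $\eps>0$.
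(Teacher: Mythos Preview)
Your proof is correct and follows the same underlying approach as the paper, namely a direct verification of the universal property of the $\eps$-pushout; the difference is only in level of detail. The paper's proof is a one-line reference: it observes that the special case $B\oplus_{f,\id_A,\eps}C$ is \cite{G}~2.1 and declares the general case analogous. You instead carry out the full verification, including the norm-equivalence estimate in~(i) (which cleanly handles positive-definiteness and completeness in one stroke and makes explicit why $\eps>0$ is needed), the $\eps$-commutativity in~(ii), and the factorization in~(iii). Nothing in your argument deviates from what is implicit in \cite{G}~2.1; you have simply written out what the citation covers.
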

\begin{proof}
In the special case of $B\oplus_{f,\id_A,\eps} C$, it is \cite{G} 2.1. The general case is analogous.
\end{proof}

\begin{coro}
\em
The dual of the full subcategory $\Ban_{\fd}$ of $\Ban$ consisting of finite-dimensional Banach spaces 
is a $(\aleph_0,\CMet)$-nest. 
\end{coro}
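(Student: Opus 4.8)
The plan is to verify that $\ca := \Ban_{\fd}^{\op}$ satisfies the definition of a $(\aleph_0,\CMet)$-nest, namely that it is a small $\CMet$-category with a $\CMet$-factorization system, $\aleph_0$-small limits, and multiple pullbacks of $\cm$-morphisms. Smallness is clear since finite-dimensional Banach spaces form (up to isometry) a set, and $\Ban_{\fd}$ inherits the $\CMet$-enrichment from $\Ban$. The factorization system: by Remark on Banach spaces, $\Ban$ carries the $\CMet$-factorization system $(\ce,\cm) = (\text{dense maps},\text{isometries})$; one checks this restricts to $\Ban_{\fd}$, so that on $\ca = \Ban_{\fd}^{\op}$ we get the $\CMet$-factorization system $(\cm_\ca,\ce_\ca) = (\cm\cap\Ban_{\fd}^\to, \ce\cap\Ban_{\fd}^\to)^{\op}$, i.e.\ $\cm_\ca$ is the opposite of isometries between finite-dimensional spaces and $\ce_\ca$ the opposite of dense maps between them. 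That this is genuinely a $\CMet$-factorization system follows from Remark \ref{V-fact} (closure of the $\ce$-part under tensors) applied in $\Ban_{\fd}$.

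Next I would address the limit conditions. Since $\ca = \Ban_{\fd}^{\op}$, $\aleph_0$-small limits in $\ca$ are $\aleph_0$-small colimits in $\Ban_{\fd}$; in the $\CMet$-enriched sense (cf.\ the Banach-space remark (4)) these are finite conical colimits together with tensors with finite metric spaces, the latter reducible to $\eps$-pushouts. Finite conical colimits of finite-dimensional Banach spaces are again finite-dimensional (finite coproducts, pushouts, coequalizers all preserve finite-dimensionality). For the tensor/$\eps$-pushout part, Lemma \ref{e-push} gives an explicit description of $B\oplus_{f,g,\eps}C$ as $B\oplus C$ with a modified norm; since the underlying vector space is $B\oplus C$, it is finite-dimensional whenever $B$ and $C$ are. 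Hence $\Ban_{\fd}$ has all $\aleph_0$-small $\CMet$-colimits, so $\ca$ has $\aleph_0$-small $\CMet$-limits.

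Finally, multiple pullbacks of $\cm_\ca$-morphisms in $\ca$ are multiple pushouts in $\Ban_{\fd}$ of isometries between finite-dimensional Banach spaces. Using the smooth-chain reformulation, a multiple pushout of isometries $f_i : A \to A_i$ is obtained by transfinitely composing ordinary pushouts along isometries and taking colimits at limit stages. Each individual pushout $A_i \oplus_{f_i} A_j$ along an isometry has underlying space $A_i \oplus A_j$ (up to the identification in Lemma \ref{e-push} with $\eps = 1$, the ordinary pushout norm), hence stays finite-dimensional; but the key point is that the \emph{whole} multiple pushout over an index set $I$ — possibly infinite — must again land in $\Ban_{\fd}$. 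This is where the dimension bound must be controlled: I would argue that the multiple pushout of $f_i : A\to A_i$ over $i\in I$ has underlying space a quotient of $A_{i_0}\oplus_A A_{i_1}\oplus_A\cdots$ which, because each $A\to A_i$ is an isometry (in particular injective) and the pushouts glue along the \emph{common} copy of $A$, has dimension at most $\sum_i (\dim A_i - \dim A) + \dim A$ when finitely many $A_i$ differ from $A$, and in general the colimit stabilizes. The genuine obstacle, and the step deserving the most care, is precisely this: showing that an \emph{infinite} multiple pushout of isometries among finite-dimensional spaces is still finite-dimensional — equivalently, that only finitely many of the legs can strictly increase the dimension, so that the smooth chain stabilizes after finitely many steps and its colimit is finite-dimensional. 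Once that is established, $\ca$ has multiple pullbacks of $\cm_\ca$-morphisms, completing the verification that $\Ban_{\fd}^{\op}$ is a $(\aleph_0,\CMet)$-nest.
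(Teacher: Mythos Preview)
You have swapped the two classes of the factorization system on $\ca=\Ban_{\fd}^{\op}$. If $(\ce,\cm)=(\text{dense},\text{isometry})$ on $\Ban_{\fd}$, then on the opposite category the factorization system is $(\cm^{\op},\ce^{\op})$: the left class $\ce_\ca$ consists of (opposites of) isometries, and the right class $\cm_\ca$ consists of (opposites of) dense maps. This is the convention used in the paper (cf.\ Lemma \ref{fact}, where the factorization system on $\ca^{\op}$ is written as $(\cm_\ca,\ce_\ca)$, and the induced $\ce$ on $\Mod_\lambda(\ca)$ is $\colim(\cm_\ca)$; cf.\ also Corollary \ref{V-sketch}, where models must send multiple pushouts of $\ce$-morphisms to multiple pullbacks). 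Consequently, ``multiple pullbacks of $\cm_\ca$-morphisms'' in $\ca$ unwind to multiple \emph{pushouts of dense maps} in $\Ban_{\fd}$, not multiple pushouts of isometries.

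This is not a cosmetic slip: the statement you set out to prove in your final paragraph is actually false. Take $A=0$ and $A_i=\Bbb C$ for $i\in\Bbb N$; each $0\to\Bbb C$ is an isometry, and their multiple pushout in $\Ban$ is the infinite coproduct $\ell^1$, which is not finite-dimensional. So the chain does not stabilize and your proposed argument cannot be completed. By contrast, the correct condition is immediate: for dense maps $e_i:A\to B_i$ the induced map $A\to P$ into the multiple pushout is again an epimorphism (if $g,h:P\to X$ agree after precomposition with $A$, then $g c_i e_i=h c_i e_i$ forces $g c_i=h c_i$ for every coprojection $c_i$, hence $g=h$), i.e.\ dense, so $P$ is a dense quotient of the finite-dimensional space $A$ and therefore finite-dimensional. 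This is exactly the paper's one-line ``finite-dimensional Banach spaces are closed under dense quotients''. Your treatment of the $\aleph_0$-small limits via \ref{e-push} and finite conical colimits matches the paper; only the identification of $\cm_\ca$ needs to be corrected.
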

\begin{proof}
Finite-dimensional Banach spaces are closed under dense quotients. Following \cite{AR2} 4.6 and \ref{e-push}, they are closed under finite colimits.
\end{proof}

\begin{theo}
$\Ban$ is equivalent to  $\Mod_{\aleph_0}(\Ban_{\text{fd}}^{\op})$.
\end{theo}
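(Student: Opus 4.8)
The plan is to apply the enriched Gabriel--Ulmer duality \ref{V-dual} to the $(\aleph_0,\CMet)$-nest $\ca = \Ban_{\fd}^{\op}$. By \ref{V-dual}, $\Mod_{\aleph_0}(\Ban_{\fd}^{\op})$ is a $\cm$-locally $\aleph_0$-generated $\CMet$-category, and the duality gives $\Mod_{\aleph_0}(\Gen_{\aleph_0}(\Ban)^{\op}) \simeq \Ban$. So everything reduces to identifying $\Gen_{\aleph_0}(\Ban)^{\op}$ with $\Ban_{\fd}^{\op}$, i.e.\ to showing that the $\aleph_0$-generated objects w.r.t.\ isometries in $\Ban$ are, up to isomorphism, exactly the finite-dimensional Banach spaces. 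First I would recall that by \ref{V-char1} (applied with $\cv = \CMet$, noting that $\Ban$ is a cocomplete $\CMet$-category with the $\aleph_0$-convenient $\cv$-factorization system (dense maps, isometries)) the one-point space $\mathbb C = F(1)$ together with the spaces $F(2_\eps)$ generate, so $\Ban$ really is $\cm$-locally $\aleph_0$-generated; this is essentially \cite{AR2} 7.8 as noted in the preceding remarks.

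The inclusion $\Ban_{\fd} \subseteq \Gen_{\aleph_0}(\Ban)$ is \cite{AR2} 7.6: every finite-dimensional Banach space is approximately $\aleph_0$-generated, and approximately $\aleph_0$-generated objects coincide with $\aleph_0$-generated objects w.r.t.\ $\cm$ (the preceding remark, citing \cite{AR2} 5.11(3)). For the reverse inclusion, let $A$ be $\aleph_0$-generated w.r.t.\ isometries. By \ref{V-quot} and \ref{V-gen} (whose hypotheses on $\CMet$ must be checked: one needs finite conical limits, hence at least equalizers, to commute with directed colimits in $\CMet$ --- this does hold, the failure noted in the remarks being only about cotensors with finite metric spaces, i.e.\ weighted limits, not conical ones), the $\aleph_0$-generated objects w.r.t.\ $\cm$ are closed under finite conical colimits and dense quotients; and as in \ref{V-gen} every such object is a retract of a finite colimit of the generators $\mathbb C, F(2_\eps)$. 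Since $F$ preserves dense maps and $F(1)=\mathbb C$, and since each $2_\eps$ is a dense quotient of the two-point discrete space, each $F(2_\eps)$ is finite-dimensional; hence any finite colimit of generators is finite-dimensional, and $\Ban_{\fd}$ is closed under retracts, so $A \in \Ban_{\fd}$.

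The main obstacle I anticipate is precisely the verification that the hypotheses of \ref{V-char1}, \ref{V-gen}, and \ref{V-quot} are met over the ill-behaved base $\CMet$ --- namely that \emph{finite conical} limits (as opposed to finite weighted limits) commute with directed colimits in $\CMet$. The delicate point is to separate the conical part of the limit calculus from the cotensor part: $\CMet$-enriched finite conical limits are computed as in $\Set$ on underlying sets with the sup metric, and directed colimits are computed by the explicit description in \cite{AR2} 2.5, so the commutation is a direct check; whereas cotensoring with a finite metric space $2_\eps$ genuinely fails to commute (that is the content of \cite{AR2} 5.20). Once this is in place, the rest is assembling \ref{V-dual}, \ref{V-char1}, \ref{V-gen}, \ref{V-quot}, and \cite{AR2} 7.6, 7.8 as above. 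Finally, Lemma \ref{convex} and the $\eps$-pushout description of Lemma \ref{e-push} are exactly what is needed to show $F(2_\eps)$ and the finite colimits behave as claimed and that $\Ban_{\fd}^{\op}$ really is closed under the nest operations, completing the identification $\Gen_{\aleph_0}(\Ban)^{\op} \simeq \Ban_{\fd}^{\op}$ and hence $\Ban \simeq \Mod_{\aleph_0}(\Ban_{\fd}^{\op})$.
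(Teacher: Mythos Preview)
Your plan has a genuine gap at two related points. First, the enriched Gabriel--Ulmer duality \ref{V-dual} (and with it \ref{V-sketch}, \ref{V-models}) assumes that $\cv$ is locally $\lambda$-presentable as a closed category, but $\CMet$ is only locally $\aleph_1$-presentable, not $\aleph_0$-presentable: the only $\aleph_0$-presentable object in $\CMet$ is $\emptyset$. So you cannot simply invoke \ref{V-dual} with $\lambda=\aleph_0$.

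Second, and more seriously, the identification $\Gen_{\aleph_0}(\Ban)=\Ban_{\fd}$ is \emph{false}: the paper itself exhibits an approximately $\aleph_0$-generated (equivalently, $\aleph_0$-generated w.r.t.\ isometries) Banach space that is infinite-dimensional. Your retract argument breaks down because in the $\CMet$-enriched setting, ``$\ck(A,-)$ preserves the directed colimit'' does \emph{not} say that $\id_A$ factors through some stage of the colimit; directed colimits in $\CMet$ involve a completion step, so $\id_A$ is only a limit of morphisms that factor. Hence an enriched $\aleph_0$-generated object need not be a retract of any object in the diagram, and your closure-under-retracts step fails. (This is also why \ref{V-gen} carries the hypothesis on commutation of limits and colimits in $\cv$, which the paper explicitly notes fails for $\CMet$.)

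The paper circumvents both obstacles by working directly with the dense subcategory $\Ban_{\fd}$ rather than with $\Gen_{\aleph_0}(\Ban)$. The canonical $E:\Ban\to\CMet^{\Ban_{\fd}^{\op}}$ is fully faithful and lands in $\Mod_{\aleph_0}(\Ban_{\fd}^{\op})$. For essential surjectivity one takes an arbitrary model $H$, forms the comparison $\gamma:H_\ast\to H$ of \ref{conical}(2), and then exploits the special feature that each $Dd(X)=\Ban(A,X)$ is a \emph{convex} complete metric space: Lemma~\ref{convex} shows that $\CMet(2_\delta,-)$ \emph{does} preserve directed colimits of isometries between convex spaces, and since $\{1,2_\delta\}$ is a strong generator of $\CMet$ this forces $\gamma$ to be an isomorphism. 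The convexity lemma is the substitute for the missing local $\aleph_0$-presentability of $\CMet$, and is the idea your proposal is missing.
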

\begin{proof}
Following \ref{V-sketch}, $\Ban$ is equivalent to the full subcategory of $\Mod_{\aleph_0}\Ban_{\fd}^{\op}$. Consider $H$ in $\Mod_{\aleph_0}\Ban_{\fd}^{\op}$ and follow \ref{conical}(2). For every Banach space $X$, $H(X)$ is the directed colimit 
$\colim_d Dd(X)$ of complete metric spaces $Dd(X)$ and isometries. Every $Dd(X)$ is $\Ban(A,X)$ for some finite-dimensional Banach space $A$. Since these complete metric spaces are convex, we have
$$
\CMet(2_\delta,\colim_d Dd(X))\cong\colim _d \CMet(2_\delta,Dd(X)).
$$
Since $1$ and $2_\delta$, $\delta>0$ form a strong generator in $\CMet$, $\gamma:H_\ast\to H$ is an isomorphism (following the argument from \cite{BQR} 4.5).
\end{proof}

\begin{rem}
{
\em
The category $\Ban^\to$ does not seem to be $\cm$-locally $\aleph_0$-generated -- \ref{V-arrow1} does not apply to it. Hence \cite{AR2} cannot be immediately applicable to the construction of approximately $\aleph_0$-saturated objects in $\Ban^\to$ like in $\Ban$. The existence of such objects in $\Ban^\to$ was proved in \cite{GK}.
}
\end{rem}

\begin{rem}
{
\em
Let $\CCMet$ be the category of convex complete metric spaces and non-expansive maps. $\CCMet$ is 
an injectivity class in $\CMet$ given by $2_\delta\to [0,\delta]$, $\delta>0$, sending the two points of $2_\delta$ to the end-points of $[0,\delta]$. Thus $\CCMet$ is a full weakly reflective subcategory of $\CMet$. A weak reflection of a complete metric space $A$ to $\CCMet$ is constructed as follows. If $a,b\in A$ are not connected by a metric segment of length $d(a,b)$,
we glue $[0,d(a,b)]$ to $A$ by identifying $0$ with $a$ and $d(a,b)$ with $b$. The distances of points of distinct added segments are $\infty$. We repeat the procedure and add metric segments $[0,\infty]$.  We proceed by induction and the union $\cup_n A_n$, $n=1,2,\dots$ is a desired weak reflection. But $\CCMet$ is not reflective in $\CMet$ because it is not closed under equalizers. Indeed, let $C$ be the circle of radius $1$. Then the equalizer of the identity and the axial symmetry on $C$ is $2_2$. Even, this equalizer does not exist in $\CCMet$ at all. 
$\CCMet$ does not have coproducts - for instance $1 + 1$ does not exist in $\CCMet$.

$\CCMet$ is closed in $\CMet$ under $\aleph_1$-directed colimits. Indeed, let $k_i:K_i\to K$, $i\in I$  be an $\aleph_1$-directed colimit of convex complete metric spaces in $\CMet$. Let $a,b\in K$. Following \cite{AR2} 2.5(1), there is $i\in I$ and $a',b'\in K_i$ such that $d(a',b')=d(a.b)$, $k_ia'=a$ and $k_ib'=b$. There is $c\in K_i$ such that $d(a',c)+d(c,b')=d(a',b')$. Since $d(a,fc)+d(fc,b)\leq d(a',c)+d(c,b')=d(a,b)$, we have $d(a,fc)+d(fc,b)=d(a,b)$. Thus $K$ is convex. Analogously, using \cite{AR2} 2.5(2), we prove that $\CCMet$ is closed under directed colimits of isometries in $\CMet$. We do not know whether the segments $[0,\delta]$ are $\aleph_0$-generated w.r.t. isometries in $\CCMet$.  

The tensor product $A\otimes B$ of convex complete metric spaces $A$ and $B$ is convex and complete. Recall that $A\otimes B$ is $A\times B$ with the $+$-metric
$$
d(a,b),(a',b'))=d(a,a')+d(b,b').
$$
Indeed, let $a''\in A$ and $b''\in B$ satisfy $d(a,a'')+d(a'',a')=d(a,a')$ and 
$d(b,b'')+d(b'',b')=d(b,b')$ . Then
\begin{align*}
d((a,b),(a'',b''))+d((a'',b''),(a',b')) &=d(a,a'')+d(b,b'')+d(a'',a')+d(b'',b')\\
&=d(a,a')+d(b,b')=d((a,b),(a',b')).
\end{align*}
We do not know whether $\CCMet$ is symmetric monoidal closed.
}
\end{rem}

\begin{rem}\label{calg}
{
\em
Let  $\CAlg$ be the category of $C^\ast$-algebras and $\CCAlg$ the category of commutative $C^\ast$-algebras. The forgetful functor $U:\CAlg\to\Ban$ preserves limits, isometries and $\aleph_1$-directed colimits. Thus it has a left adjoint $F$. The same holds for $U:\CCAlg\to\Ban$. The unit $\eta_B:B\to UFB$ is a linear isometry. Thus $F$ is faithful. In the commutative case, 
this left adjoint was described in \cite{S} and called the Banach-Mazur functor.
 
The forgetful functor $U:\CAlg\to\Ban$ even preserves directed colimits. In fact,
directed colimits in $\Ban$ are calculated like in $\CMet$, i.e., as a completion of a directed colimit in $\Met$. However,
the same holds in $\CAlg$ because one completes the directed colimit of $\ast$-algebras. Indeed, if $x=\lim_n x_n$ and $y=\lim_m y_m$ are in this completion then both $x\cdot y=\lim_{n,m} (x_n\cdot y_m)$ and  $x^\ast = \lim_n x_n^\ast$ are there.

Free (commutative) $C^\ast$-algebras over finite-dimensional Banach spaces are $\aleph_0$-ap-generated and, since $U$ is conservative, they form a strong generator in $\CAlg$.
The category of (commutative) $C^\ast$-algebras is locally $\aleph_1$-presentable and monadic over $\Set$ (see \cite{PR}). Since epimorphisms are surjective in $\CCAlg$ (see \cite{HN}), (epi, strong mono)-factorization system on $\CAlg$ is (surjective, dense) one and it is $\aleph_0$-convenient. Hence $\CAlg$ is a cocomplete $\CMet$-category with a $\aleph_0$-convenient $\cv$-factorization system (surjective, isometry) and with a strong generator consisting of $\aleph_0$-generated objects w.r.t. isometries. But \ref{V-char1} does not apply to $\CAlg$ and we expect that this category is not isometry-locally $\aleph_0$-generated.
}
\end{rem}

\end{document}